\algrenewcommand{\algorithmiccomment}[2][.25\linewidth]{%
  \leavevmode\hfill\makebox[#1][l]{$\triangleright$~#2}} 
\DeclareMathOperator*{\argmax}{arg\,max}
\DeclareMathOperator*{\argmin}{arg\,min}
\renewcommand{\norm}[1]{\left\lVert#1\right\rVert}
\newcommand{\nvert}[1]{\lVert#1\rVert}
\newcommand{\svert}[1]{\lvert#1\rvert}
\newcommand{\bangle}[2]{\langle#1,#2\rangle}
\newcommand{\transpose}{\mathsf{T}}
\renewcommand{\square}{\Square}
\newcommand{\conv}{\mathrm{conv}}
\newcommand{\dom}{\mathrm{dom}\,}
\newcommand{\Reg}{\mathrm{R}}
\newcommand{\Prob}{\mathrm{Prob}}
\renewcommand{\square}{{\scriptsize\Square}}
\def\ubar#1{\underline{\sbox\tw@{$#1$}\dp\tw@\z@\box\tw@}}
\def\defcal#1{%
\expandafter\newcommand\csname cal#1\endcsname{\mathcal{#1}}}
\edef\y{\@Alph\count@}%
\def\defscr#1{%
\expandafter\newcommand\csname scr#1\endcsname{\mathscr{#1}}}
\edef\y{\@Alph\count@}%
\def\defbb#1{%
\expandafter\newcommand\csname bb#1\endcsname{\mathbb{#1}}}
\edef\y{\@Alph\count@}%
\def\defrm#1{%
\expandafter\newcommand\csname rm#1\endcsname{\mathrm{#1}}}
\edef\y{\@Alph\count@}%
\renewcommand{\phi}{\varphi}
\renewcommand{\epsilon}{\varepsilon}
\newcommand{\ulQ}{\underline{Q}}
\newcommand{\ulcQ}{\underline{\calQ}}
\newcommand{\olcQ}{\overline{\calQ}}
\newcommand{\UB}{\text{\textsc{UpperBound}}}
\newcommand{\LB}{\text{\textsc{LowerBound}}}
\newcommand{\NumEval}{\textup{\texttt{\#Eval}}}
\newcommand{\Eval}{\mathrm{Eval}}
\newcommand{\Nomin}{\mathrm{Nomin}}
\newcommand{\Robust}{\mathrm{Robust}}
\newcommand{\CVaR}{\mathrm{CVaR}}
\newcommand{\RWass}{\mathrm{RWass}}
\newcommand{\mean}{\mathrm{mean}}
\newcommand{\ext}{\mathrm{ext}\,}
\newtheorem{theorem}{Theorem}
\newtheorem{proposition}[theorem]{Proposition}
\newtheorem{corollary}[theorem]{Corollary}
\newtheorem{lemma}[theorem]{Lemma}
\newtheorem{definition}{Definition}
\newtheorem{assumption}{Assumption}
\newtheorem{example}{Example}
\theoremstyle{remark}
\newtheorem*{remark}{Remark}
\title{On Distributionally Robust Multistage Convex Optimization: Data-driven Models and Performance}
\author[1]{Shixuan Zhang}
\author[2]{Xu Andy Sun}
\date{}
\affil[1]{Wm Michael Barnes '64 Department of Industrial \& Systems Engineering, Texas A\&M University, \quad \url{shixuan.zhang@tamu.edu}}
\affil[2]{Operations Research and Statistics, Sloan School of Management, Massachusetts Institute of Technology, \quad \url{sunx@mit.edu}}
\begin{document}

\maketitle

\begin{abstract}
   This paper presents a novel algorithmic study with extensive numerical experiments of distributionally robust multistage convex optimization (DR-MCO).
   Following the previous work on dual dynamic programming (DDP) algorithmic framework for DR-MCO~\cite{zhang2020distributionally}, we focus on data-driven DR-MCO models with Wasserstein ambiguity sets that allow probability measures with infinite supports.
   These data-driven Wasserstein DR-MCO models have out-of-sample performance guarantees and adjustable in-sample conservatism.
   Then by exploiting additional concavity or convexity in the uncertain cost functions, we design exact single stage subproblem oracle (SSSO) implementations that ensure the convergence of DDP algorithms.
   We test the data-driven Wasserstein DR-MCO models against multistage robust convex optimization (MRCO), risk-neutral and risk-averse multistage stochastic convex optimization (MSCO) models on multi-commodity inventory problems and hydro-thermal power planning problems.
   The results show that our DR-MCO models could outperform MRCO and MSCO models when the data size is small.
\\\emph{Keywords: }
distributionally robust optimization,
multistage convex optimization,
dual dynamic programming algorithm,
multi-commodity inventory,
hydro-thermal power planning problem
\end{abstract}

\section{Introduction}

Multistage convex optimization is a decision-making problem where objective functions and constraints are convex and decisions need to be made each time some of the uncertainty information is revealed.
As it is often challenging to gain precise knowledge of the probability distributions of the uncertainty, distributionally robust multistage convex optimization (DR-MCO) allows ambiguities in probability distribution and aims to find optimal decisions that minimize the overall expected objective cost with respect to the worst-case distribution.
The DR-MCO framework encompasses multistage stochastic convex optimization (MSCO) and multistage robust convex optimization (MRCO) as special cases, and thus have been widely applied in many areas including energy systems, supply chain and inventory planning, portfolio optimization, and finance~\cite{shapiro2021lectures,bental2009robust}.

Distributionally robust optimization (DRO) has received significant research attention in recent years.
Common choices of the ambiguity sets include moment-based ambiguity sets~\cite{scarf1957min,delage2010distributionally,shapiro2004class,wiesemann2014distributionally}, discrepancy or distance-based ambiguity sets~\cite{bental2013robust,bayraksan2015data,rahimian2019identifying,bertsimas2018data}, and many other ones that are constructed from shape requirements or special properties of the distributions (we refer any interested reader to the review papers by~\cite{rahimian2019distributionally} and \cite{lin2022distributionally} for more details).
Among all these choices, data-driven Wasserstein ambiguity sets, i.e., Wasserstein distance balls centered at an empirical probability distribution obtained from given uncertainty data, has gained increasing popularity because of the following reasons:
(1) measure concentration in Wasserstein distance guarantees with high probability that the decisions or policies obtained from solving the in-sample model provides an upper bound on the out-of-sample mean cost~\cite{fournier2015rate,esfahani2018data}; and
(2) even if we consider general probability distributions in the ambiguity sets, tractable finite-dimensional reformulation or approximation can be derived using strong duality for 1-Wasserstein distance of probability measures on Euclidean spaces~\cite{esfahani2018data,zhao2018data,hanasusanto2018conic}, or more generally for any \(p\)-Wasserstein distance of probability measures on Polish spaces~\cite{gao2022distributionally}.
We remark that these reformulations or approximations are derived for single-stage or two-stage settings.
To the best of our knowledge, it remains unclear how to solve DR-MCO problems with Wasserstein ambiguity sets and infinite uncertainty sets.

There are also many works on DRO in the multistage settings.
In particular, if the ambiguity set only includes probability measures that are absolutely continuous with respect to a given reference measure, then such multistage DRO falls into the category of risk-averse multistage stochastic optimization~\cite{shapiro2021lectures,pichler2021mathematical}, which dates back to at least~\cite{eichhorn2005polyhedral}.
When the underlying uncertainty is stagewise independent, a random nested cutting plane algorithm, which is called stochastic dual dynamic programming (DDP) and is very often used for risk-neutral multistage stochastic linear optimization (MSLO) problems~\cite{pereira1991multi}, has been extended to risk-averse MSLO with polyhedral risk measures in~\cite{guigues2012sampling} and \cite{shapiro2013risk}.
DDP algorithms iteratively build under-approximations of the worst-case expected cost-to-go functions in the dynamic programming recursion, which lead to policies that minimize the approximate total cost starting from each stage.
To estimate the quality of the obtained policy, deterministic over-approximation is proposed in addition to the under-approximations for risk-averse MSLO problem in~\cite{philpott2013solving}.
Alternatively, for time-consistent conditional-value-at-risk (CVaR) risk measures, a new sampling scheme based on importance sampling is proposed to achieve tighter estimation of the policy quality~\cite{kozmik2015evaluating}.
Further exploiting the deterministic over-approximation, a deterministic version of the DDP algorithm is proposed for risk-averse MSLO in~\cite{baucke2018deterministic}.
As a variant, robust DDP is proposed for multistage robust linear optimization (MRLO) using similar deterministic over-approximation of cost-to-go functions in~\cite{georghiou2019robust}.
In~\cite{philpott2018distributionally}, stochastic DDP algorithm is used to solve distributionally robust multistage linear optimization (DR-MLO) using ambiguity sets defined by a modified \(\chi^2\)-distance for probability distributions supported on the historical data.
In~\cite{duque2020distributionally}, stochastic DDP algorithm is used to solve DR-MLO with Wasserstein ambiguity sets on finitely supported probability distributions with asymptotic convergence and promising out-of-sample performance.
We comment that the above solution approaches for risk-averse MSLO or DR-MLO rely either on sample average approximations of the risk measures or on the assumption that all uncertainties have finite support.
Recently, \cite{park2022data} propose a data-driven kernel regression-based DR-MLO framework that can handle Markovian uncertainty with asymptotic out-of-sample performance guarantee.
For any fixed number of samples, their out-of-sample suboptimality gap is bounded by $\calO(T^{3/2})$ where $T$ is the number of stages.
Our work, while focused on stagewise independent problems, has a strengthened bound of $\calO(T(\ln{T})^{1/2})$ in the sub-Gaussian case, and a matching bound $\calO(T^{3/2})$ in the finite third moment case.

As DDP-type algorithms are extensively applied in solving risk-neutral and risk-averse MSLO and DR-MLO, their convergence analysis and complexity study become central questions.
The finite time convergence of stochastic DDP algorithms is first proved for MSLO problems using polyhedrality of cost-to-go functions in~\cite{philpott2008convergence} and \cite{shapiro2011analysis}.
Such convergence is similarly proved for deterministic DDP algorithms for MSLO in~\cite{baucke2017deterministic}, for MRLO in~\cite{georghiou2019robust}, and for DR-MLO in~\cite{baucke2018deterministic}.
An asymptotic convergence of stochastic DDP algorithms is proved using monotone convergence argument in the space of convex cost-to-go function approximations for general risk-neutral multistage stochastic convex optimization (MSCO) in~\cite{girardeau2015convergence} and risk-averse MSCO in~\cite{guigues2016convergence}.
Complexity study of DDP algorithms is established using Lipschitz continuity of the under-approximations of the cost-to-go functions for risk-neutral MSCO in~\cite{lan2020complexity} and \cite{lan2022correction}, and for risk-neutral multistage mixed-integer nonlinear optimization in~\cite{zhang2022stochastic}.
Our recent work~\cite{zhang2020distributionally} adopts an abstract definition of single stage subproblem oracles (SSSO) and proves SSSO-based complexity bounds, hence also the convergence, for two DDP algorithms applied to DR-MCO problems with general uncertainty supports and ambiguity sets.

Following~\cite{zhang2020distributionally}, we aim to further study data-driven DR-MCO models and compare their numerical performance to other baselines models.
In particular, our paper makes the following contributions to the literature.
\begin{enumerate}
    \item We prove the out-of-sample performance guarantee using measure concentration results, adjustable in-sample conservatism assuming Lipschitz continuity of the value functions in the uncertainty variables for the data-driven DR-MCO models with Wasserstein ambiguity sets.
    \item We discuss implementations of SSSO in the context of Wasserstein ambiguity sets containing infinitely supported probability distributions.
    Such SSSO allows DDP algorithms to converge with provable complexity bounds to an \(\epsilon\)-global optimal first-stage solution, which could help minimize the influence from the sub-optimality caused by early termination of the algorithms when we compare model performances.
    \item We present extensive numerical experiments using multi-commodity inventory problem with either uncertain demands or uncertain prices, and hydro-thermal power planning problems with real-world data, that compares the out-of-sample performances of our DR-MCO models against risk-neutral and CVaR risk-averse MSCO models, as well as the MRCO model.
    To the best of our knowledge, these are the first numerical experiments in the literature that compare DR-MCO models with MSCO and MRCO models when the uncertainty has infinite support sets  such as the nonnegative orthant $\bbR_{\ge0}^n$.
\end{enumerate}

The rest of the paper is organized as follows.
In Section~\ref{sec:Model}, we define the DR-MCO model considered in this paper and show some of its favorable properties.
In Section~\ref{sec:Algorithm}, we review DDP algorithms for DR-MCO and study the implementation of SSSO for Wasserstein ambiguity sets.
In Section~\ref{sec:Numerical}, we present numerical experiment results comparing DR-MCO models against other baseline models on two application problems.
We provide some concluding remarks in Section~\ref{sec:Conclusion}.

\section{Data-driven Model and Its Properties}
\label{sec:Model}

\subsection{Data-driven Model Formulation}

In this section, we present a data-driven model for DR-MCO and some of its properties.
Let \(\calT:=\{1,\dots,T\}\) denote the set of stage indices.
In each stage \(t\in\calT\), we use \(\calX_t\subset\bbR^{d_t}\) to denote the convex state space and \(x_t\) its elements, which is known as the state vector.
We denote the set of uncertainties before stage \(t\) as \(\Xi_t\subseteq\bbR^{\delta_t}\) and its elements as \(\xi_t\).
For simplicity, we use the notation \(\calX_0=\{x_0\}\) and \(\Xi_1=\{\xi_1\}\) to denote parameter sets of the given initial state.
In each stage \(t\in\calT\setminus\{1\}\), the uncertainty \(\xi_t\) is assumed to be distributed according to an unknown probability measure \(p_t\) taken from a subset of the probability measures \(\calP_t\subset\calM^\Prob(\Xi_t)\) supported on $\Xi_t$.
The cost in each stage \(t\in\calT\) is described through a nonnegative, lower semicontinuous (lsc) local cost function \(f_t(x_{t-1},x_t;\xi_t)\), which is assumed to be convex in \(x_{t-1}\) and \(x_t\) for every \(\xi_t\in\Xi_t\).
We allow \(f_t\) to take \(+\infty\) to model constraints relating the states \(x_{t-1},x_t\) and the uncertainty \(\xi_t\), so that $\calX_t$ can be independent of $x_{t-1}$ and $\xi_t$.
The DR-MCO considered in this paper can be written as follows.
\begin{align}\label{eq:MDRO-ExtensiveForm}
    \inf_{x_1\in\calX_1}\quad f_1(x_0,x_1;\xi_1)\ +\;&\sup_{p_2\in\calP_2}\bbE_{\xi_2\sim p_2} \Bigg[\inf_{x_{2}\in\calX_2}f_{2}(x_1,x_{2};\xi_2) + \\
    &+\sup_{p_{3}\in\calP_3}\bbE_{\xi_3\sim p_3}\bigg[\inf_{x_{3}\in\calX_{3}}f_{3}(x_{2},x_{3};\xi_3)+\cdots\notag\\
    &\;\;\;\;\;\;+\sup_{p_{T}\in\calP_{T}}\bbE_{\xi_{T}\sim p_{T}}\Big[\inf_{x_{T}\in\calX_{T}}f_{T}(x_{T-1},x_{T};\xi_T)\Big]\cdots\bigg]\Bigg].\notag
\end{align}
Here, \(\bbE_{\xi_t\sim p_t}\) is the expectation with respect to variable \(\xi_t\) distributed according to the probability measure \(p_t\).
Note that we only consider uncertainty sets \(\Xi_t\) and ambiguity sets \(\calP_t\) that are independent between stages in this paper.
This is usually referred to as \emph{stagewise independence} and is commonly adopted in MSCO and MRCO literature~\cite{shapiro2011analysis,georghiou2019robust}.

\begin{example}[Constrained Formulation]\label{ex:ConstrainedFormulation}
    In the literature, a stage-$t$ subproblem is often given by
    \begin{equation}\label{eq:MDRO-ConstrainedFormulation}
        \begin{aligned}
            \min_{x_t,y_t}\quad & F_t(x_{t-1},y_t,x_t;\xi_t)\\
            \mathrm{s.t.}\quad & (x_{t-1},y_t,x_t,\xi_t)\in\calF_t,y_t\in\calY_t,
        \end{aligned}
    \end{equation}
    for some \emph{internal} variable $y_t\in\calY_t$, a lsc real-valued function $F_t$ that is convex in $(x_{t-1},y_t,x_t)$, and a closed convex constraint set $\calF_t$~\cite{ding2019python}.
    It can be reduced to single function used in~\eqref{eq:MDRO-ExtensiveForm} if we define
    \begin{equation*}
        f_t(x_{t-1},x_t;\xi_t):=\min_{y_t\in\calY_t}\ F_t(x_{t-1},y_t,x_t;\xi_t)+I_{\calF_t}(x_{t-1},y_t,x_t,\xi_t),
    \end{equation*}
    using the indicator function $I_{\calF_t}$ of the constraint set $\calF_t$, whose value is $0$ for any $(x_{t-1},y_t,x_t,\xi_t)\in\calF_t$, and $+\infty$ everywhere else.
    Such function $f_t$ is convex in $(x_{t-1}, x_t)$ because its epigraph is the coordinate projection of that of $F_t$ for every fixed $\xi_t$.
    Moreover, $f_t$ is lsc when the set of internal variables $\calY_t$ is compact, as is the case for all of our numerical examples in Section~\ref{sec:Numerical}.
\end{example}

Based on stagewise independence in the nested formulation, we can write the following recursion that is equivalent to~\eqref{eq:MDRO-ExtensiveForm} using the (worst-case expected) cost-to-go functions,
\begin{equation}\label{eq:MDRO-RecursiveForm}
    \calQ_{t-1}(x_{t-1}):=\sup_{p_t\in\calP_t}\bbE_{\xi_t\sim p_t}\bigg[\inf_{x_{t}\in\calX_{t}}f_{t}(x_{t-1},x_{t};\xi_{t})+\calQ_{t}(x_{t})\bigg],
\end{equation}
for each \(t\in\calT\) and we set by convention \(\calQ_T(x_T):=0\) for any \(x_T\in\calX_T\).
To simplify the notation, we also define the following value function for each stage \(t\in\calT\):
\begin{equation}\label{eq:MDRO-ValueFunction}
    Q_t(x_{t-1};\xi_{t}):=\inf_{x_{t}\in\calX_{t}}f_{t}(x_{t-1},x_{t};\xi_t)+\calQ_{t}(x_{t}).
\end{equation}
Using these value functions, we may write the optimal value of the DR-MCO~\eqref{eq:MDRO-ExtensiveForm} as \(Q_1(x_0;\xi_1)\) and further simplify the recursion~\eqref{eq:MDRO-RecursiveForm} as
\begin{equation}\label{eq:MDRO-SimplifiedCostToGo}
    \calQ_{t-1}(x_{t-1})=\sup_{p_t\in\calP_t}\int_{\Xi_t}Q_{t}(x_{t-1};\xi_t)\dif p_t(\xi_t).
\end{equation}

While there are many different choices of the ambiguity set \(\calP_t\) for each stage \(t\in\calT\) (see e.g.,~\cite{wiesemann2014distributionally}), we focus on the data-driven ambiguity sets constructed as follows.
Suppose we have the knowledge of \(n_t\) samples \(\hat{\xi}_{t,1},\dots,\hat{\xi}_{t,n_t}\) of the uncertainty \(\xi_t\).
The empirical probability measure is given by  \(\hat{\nu}_t:=\frac{1}{n_t}\sum_{k=1}^{n_t}\Delta_{\hat{\xi}_{t,k}}\), where for each \(k=1,\dots,n_t\), \(\Delta_{\hat{\xi}_{t,k}}\) is the Dirac probability measure supported at the point \(\hat{\xi}_{t,k}\in\Xi_t\), i.e., \(\int_{\Xi_t}f\dif\Delta_{\hat{\xi}_{t,k}}=f(\hat{\xi}_{t,k})\) for any compactly supported function \(f\) on \(\Xi_t\).
Such an empirical probability measure \(\hat{\nu}_t\) captures the information from the sample data and is often used to build the sample average approximation for multistage stochastic optimization~\cite{shapiro2021lectures}.

Fix any distance function \(d_t(\cdot,\cdot)\) on \(\Xi_t\), the Wasserstein distance of order 1 (a.k.a, Kantorovich-Rubinstein distance) is defined as
\begin{equation}\label{eq:WassersteinDistance}
    W_t(\mu,\nu):=\inf_{\pi\in\calM^\Prob(\Xi_t\times\Xi_t)}\left\{\int_{\Xi_t\times\Xi_t} d_t(\xi^1,\xi^2)\dif\pi(\xi^1,\xi^2):P^1_*(\pi)=\mu,\,P^2_*(\pi)=\nu\right\}, 
\end{equation}
for any two probability measures \(\mu,\nu\in\calM^\Prob(\Xi_t)\), where \(P^i_*(\pi)\) is the pushforward measure induced by the projection maps \(P^i:\Xi_t\times\Xi_t\to\Xi_t\) by sending \(P^i(\xi^1,\xi^2)=\xi^i\), for \(i=1\) or \(2\). That is, the joint probability measure \(\pi\) in~\eqref{eq:WassersteinDistance} has marginal probability measures equal to \(\mu\) and \(\nu\).

It can be shown that \(W_t\) is indeed a distance on the space of probability measures \(\calM^\Prob(\Xi_t)\)~\cite[Definition 6.1]{villani2008optimal} except that it may take the value of \(+\infty\).
Thus it is natural to restrict our attention to the convex subset of probability measures with finite distance to a Dirac measure on \(\Xi_t\)
\begin{equation}\label{eq:WassersteinSpace}
    \calW_t:=\left\{\mu\in\calM^\Prob(\Xi_t):\int_{\Xi_t}d_t(\bar{\xi},\xi)\dif\mu(\xi)<+\infty,\text{ for some }\bar{\xi}\in\Xi_t\right\}.
\end{equation}
Note that any continuous function \(g(\xi)\) that satisfies \(\svert{g(\xi)}\le C(1+d_t(\bar{\xi},\xi))\) for some \(C>0\) and \(\bar{\xi}\in\Xi_t\) would be integrable for any probability measure in \(\calW_t\).
Now given any such continuous functions \(g_{t,1},\dots,g_{t,m_t}\) on \(\Xi_t\) and a real vector \(\rho_t:=(\rho_{t,j})_{j=0}^{m_t}\in\bbR^{m_t+1}\), we define the Wasserstein ambiguity set \(\calP_t\) as
\begin{equation}\label{eq:WassersteinAmbiguitySet}
    \calP_t:=\left\{p\in\calW_t:W_t(p,\hat{\nu}_t)\le \rho_{t,0},\;\bangle{g_{t,j}}{p}\le\rho_{t,j},\;j=1,\dots,m_t\right\}.
\end{equation}
The first inequality constraint in the definition~\eqref{eq:WassersteinAmbiguitySet} bounds the Wasserstein distance of the probability measure \(p\in\calW_t\) from the empirical measure \(\hat{\nu}_t\), while the second set of constraints on \(p\) are defined as $\bangle{g_{t,j}}{p}:=\int_{\Xi_t} g_{t,j}(\xi)\dif p(\xi)$, which could be bounds on the moments.

It is well studied that the recursion for data-driven Wasserstein DR-MCO problems~\eqref{eq:MDRO-RecursiveForm} can be reformulated as a finite-dimensional minimum-supremum optimization problem, under the assumption of strict feasibility. 
\begin{assumption}\label{assum:NominalStrictFeasibility}
    The empirical probability measure \(\hat{\nu}_t\) is a strictly feasible solution of \eqref{eq:WassersteinAmbiguitySet}, i.e. it satisfies $\bangle{g_{t,j}}{\hat{\nu}_t}=\frac{1}{n_t}\sum_{k=1}^{n_t}g_{t,j}(\hat{\xi}_{t,k})<\rho_{t,j}$ for all \(j=1,\dots,m_t\).
\end{assumption}
\begin{theorem}\label{thm:FiniteDimensionalDual}
Let $d_{t,k}(\xi):=d_t(\xi,\hat{\xi}_k)$ denote the distance function to each sample $\hat{\xi}_k$ for each $k=1,\dots,n_t$.
Under Assumption~\ref{assum:NominalStrictFeasibility}, in any stage \(t\ge 2\), the expected cost-to-go function~\eqref{eq:MDRO-RecursiveForm} can be equivalently written as
\begin{align}\label{eq:MDRO-FiniteDimensionalRecursion}
    \calQ_{t-1}(x_{t-1})=
    &\min_{\lambda\ge0}\left\{\sum_{j=0}^{m_t}\rho_{t,j}\lambda_j+\frac{1}{n_t}\sum_{k=1}^{n_t}\sup_{\xi_k\in\Xi_t}\left\{Q_{t}(x_{t-1};\xi_k)-\lambda_0 d_{t,k}(\xi_k)-\sum_{j=1}^{m_t}\lambda_j g_{t,j}(\xi_k)\right\}\right\}.
\end{align}
\end{theorem}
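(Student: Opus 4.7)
The plan is to apply Lagrangian duality to the two families of constraints defining $\calP_t$ (the Wasserstein ball and the $m_t$ moment-like inequalities), then use the coupling representation of the Wasserstein distance together with the discreteness of $\hat\nu_t$ to reduce the remaining inner problem to pointwise suprema.

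First, I would write the primal problem in the form
\begin{equation*}
\calQ_{t-1}(x_{t-1}) \;=\; \sup_{p\in\calW_t}\,\inf_{\lambda\geq 0}\;\int_{\Xi_t} Q_t(x_{t-1};\xi)\,\dif p(\xi) \;+\; \sum_{j=0}^{m_t}\lambda_j\bigl(\rho_{t,j}-\phi_j(p)\bigr),
\end{equation*}
where $\phi_0(p):=W_t(p,\hat\nu_t)$ and $\phi_j(p):=\bangle{g_{t,j}}{p}$ for $j\geq 1$. Assumption~\ref{assum:NominalStrictFeasibility} provides the Slater-type condition needed for strong duality: $\hat\nu_t$ itself is strictly feasible for the moment constraints (by hypothesis) and for the Wasserstein constraint (since $W_t(\hat\nu_t,\hat\nu_t)=0\leq\rho_{t,0}$); convexity of $\calP_t$ and concavity in $p$ of the objective allow exchanging $\sup_p$ and $\inf_\lambda$ (invoking the Wasserstein DRO duality results, e.g., of Esfahani--Kuhn and Gao--Kleywegt, extended by the Lagrangian treatment of the additional constraints; the boundary case $\rho_{t,0}=0$ is handled separately, as then $\calP_t\subseteq\{\hat\nu_t\}$ and both sides reduce by inspection).

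After swapping, I would fix $\lambda\geq 0$ and analyze
\begin{equation*}
\sup_{p\in\calW_t}\,\int_{\Xi_t}\!\Bigl(Q_t(x_{t-1};\xi)-\sum_{j=1}^{m_t}\lambda_j g_{t,j}(\xi)\Bigr)\dif p(\xi) \;-\; \lambda_0 W_t(p,\hat\nu_t).
\end{equation*}
Using the definition~\eqref{eq:WassersteinDistance} and the fact that the infimum over couplings and the supremum over $p$ combine into a joint supremum over probability measures $\pi$ on $\Xi_t\times\Xi_t$ with first marginal equal to $\hat\nu_t$, this becomes
\begin{equation*}
\sup_{\pi:\,P^1_*\pi=\hat\nu_t}\,\int_{\Xi_t\times\Xi_t}\!\Bigl(Q_t(x_{t-1};\xi^2)-\sum_{j=1}^{m_t}\lambda_j g_{t,j}(\xi^2)-\lambda_0 d_t(\xi^1,\xi^2)\Bigr)\dif\pi(\xi^1,\xi^2).
\end{equation*}
Since $\hat\nu_t=\tfrac{1}{n_t}\sum_k\delta_{\hat\xi_{t,k}}$ is finitely supported, any such $\pi$ disintegrates as $\pi=\tfrac{1}{n_t}\sum_k \delta_{\hat\xi_{t,k}}\otimes q_k$ for probability measures $q_k$ on $\Xi_t$, and the integral splits into $\tfrac{1}{n_t}\sum_k\int h_\lambda(\hat\xi_{t,k},\xi)\dif q_k(\xi)$. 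Optimizing each $q_k$ independently yields the pointwise supremum over $\Xi_t$, producing exactly the inner sup in~\eqref{eq:MDRO-FiniteDimensionalRecursion} with $d_{t,k}(\xi_k):=d_t(\hat\xi_{t,k},\xi_k)$. Adding back the term $\sum_{j=0}^{m_t}\rho_{t,j}\lambda_j$ and taking $\min_{\lambda\geq 0}$ gives the claimed formula.

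The main obstacle is the strong duality step: the underlying space $\calW_t$ is infinite-dimensional, the objective $\int Q_t\,\dif p$ may be unbounded, and one must ensure the Lagrangian has no duality gap across \emph{both} the Wasserstein constraint and the linear moment constraints simultaneously. I would handle this by citing the existing Wasserstein DRO duality (which gives no gap for the Wasserstein constraint alone under mild integrability of $Q_t$, controlled by the growth of $Q_t$ in $\xi$ on $\calW_t$), then treating the $m_t$ linear constraints by a standard Lagrangian argument made rigorous by the Slater condition in Assumption~\ref{assum:NominalStrictFeasibility}. A secondary technical point is to confirm measurability and integrability so that the disintegration and interchange of sup and integral are justified; lower semicontinuity of $f_t$ (hence of $Q_t$ in $\xi$) and continuity of $g_{t,j}$ suffice.
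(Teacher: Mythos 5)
Your skeleton matches the paper's: dualize the $m_t+1$ scalar constraints, exploit the finite support of $\hat\nu_t$ to disintegrate the coupling into conditional measures $q_k$, and reduce the supremum over each $q_k$ to a pointwise supremum over $\Xi_t$. The structural difference is in how the Wasserstein constraint enters the duality. You keep $\phi_0(p)=W_t(p,\hat\nu_t)$ as a convex constraint functional of $p$ and only unfold the coupling \emph{after} dualizing; this has the nice side effect that the non-attainment of the infimum in \eqref{eq:WassersteinDistance} never bites, since $-\lambda_0 W_t(p,\hat\nu_t)$ is itself a supremum over couplings. The paper instead first \emph{replaces} the Wasserstein constraint by the stronger ``there exists a coupling of cost $\le\rho$'' condition, rewrites everything in terms of the conditional measures $p_{t,k}$ so that all constraints become linear in $(p_{t,1},\dots,p_{t,n_t})\in\calW_t^{n_t}$, applies a self-contained generalized Slater duality theorem (Proposition~\ref{prop:StrongDuality}, Corollary~\ref{cor:StrongDualityStrictInequality}), and then must separately close the gap caused by non-attainment via concavity and continuity of the parametrized value $q_t(x_{t-1};\cdot)$ in $\rho$. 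So each route trades one difficulty for another: yours avoids the $\epsilon$-continuity patch but must dualize a genuinely nonlinear convex functional; the paper's keeps everything linear but needs the limit argument in $\rho$.

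The genuine gap in your proposal is exactly the step you flag as the ``main obstacle'': the exchange of $\sup_p$ and $\inf_\lambda$. Citing Esfahani--Kuhn or Gao--Kleywegt does not suffice, because those results dualize the Wasserstein constraint \emph{alone}; after you add multipliers for the moment constraints the inner integrand becomes $Q_t-\sum_j\lambda_j g_{t,j}$, and conversely if you dualize the moment constraints first you are outside the scope of those theorems. What is actually needed is a single strong-duality statement for finitely many convex inequality constraints over the convex set $\calW_t$, with no topology on the space of measures --- precisely what the paper's Proposition~\ref{prop:StrongDuality} supplies via subdifferentiability of the perturbation function $v(\eta)$ at $\eta=0$. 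Relatedly, your Slater verification for the Wasserstein constraint reads $W_t(\hat\nu_t,\hat\nu_t)=0\le\rho_{t,0}$, but strict feasibility requires $0<\rho_{t,0}$; you do set aside the case $\rho_{t,0}=0$, but you should state the strict inequality explicitly, since it is what makes $\hat\nu_t$ a Slater point jointly with Assumption~\ref{assum:NominalStrictFeasibility}. With such a duality lemma in hand (the paper's Corollary~\ref{cor:StrongDualityStrictInequality} applied to $\phi_0(p)=W_t(p,\hat\nu_t)-\rho_{t,0}$ and $\phi_j(p)=\bangle{g_{t,j}}{p}-\rho_{t,j}$ works, as $W_t(\cdot,\hat\nu_t)$ is convex on $\calW_t$), the rest of your argument --- the joint supremum over couplings with second marginal $\hat\nu_t$, the disintegration, and the pointwise optimization --- goes through as you describe.
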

We provide the derivation and proof details in Section~\ref{sec:DualRecursion}.
As a corollary, we can prove a special version of the Kantorovich-Rubinstein duality formula~\cite[Remark 6.5]{villani2008optimal}.

\begin{corollary}\label{cor:KRDualityFormula}
    Under Assumption~\ref{assum:NominalStrictFeasibility}, if the value function \(Q_t(x_{t-1};\xi_t)\) is \(l_t\)-Lipschitz continuous in the uncertainty \(\xi_t\in\Xi_t\) for any \(x_{t-1}\in\calX_{t-1}\), then we have
    \[
        \calQ_{t-1}(x_{t-1})\le \rho_{t,0}l_t+\frac{1}{n_t}\sum_{k=1}^{n_t} Q_t(x_{t-1};\hat{\xi}_{t,k}).
    \]
\end{corollary}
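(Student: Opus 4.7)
The plan is to derive the bound directly from Theorem~\ref{thm:FiniteDimensionalDual} by exhibiting a specific feasible dual multiplier \(\lambda\) whose associated value upper-bounds the optimal dual value. Concretely, I would set \(\lambda_0 := l_t\) and \(\lambda_j := 0\) for \(j = 1,\dots,m_t\), which is clearly admissible in the minimization on the right-hand side of~\eqref{eq:MDRO-FiniteDimensionalRecursion}. Substituting this choice, the \(\sum_{j=0}^{m_t}\rho_{t,j}\lambda_j\) term reduces to \(\rho_{t,0} l_t\), and each of the \(n_t\) inner suprema becomes
\[
    \sup_{\xi_k\in\Xi_t}\bigl\{Q_t(x_{t-1};\xi_k) - l_t\, d_{t,k}(\xi_k)\bigr\},
\]
where \(d_{t,k}(\xi_k) = d_t(\hat{\xi}_{t,k},\xi_k)\) is the distance from \(\xi_k\) to the \(k\)-th data point.

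The key step is to bound each of these suprema using the Lipschitz hypothesis. For every \(\xi_k\in\Xi_t\), \(l_t\)-Lipschitz continuity of \(Q_t(x_{t-1};\cdot)\) gives
\[
    Q_t(x_{t-1};\xi_k) - Q_t(x_{t-1};\hat{\xi}_{t,k}) \le l_t\, d_t(\hat{\xi}_{t,k},\xi_k) = l_t\, d_{t,k}(\xi_k),
\]
which rearranges to \(Q_t(x_{t-1};\xi_k) - l_t\, d_{t,k}(\xi_k) \le Q_t(x_{t-1};\hat{\xi}_{t,k})\). Taking supremum over \(\xi_k\in\Xi_t\) (and noting the bound is attained at \(\xi_k = \hat{\xi}_{t,k}\), where \(d_{t,k} = 0\)), each supremum equals exactly \(Q_t(x_{t-1};\hat{\xi}_{t,k})\).

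Combining these two steps, the chosen \(\lambda\) yields the value \(\rho_{t,0}l_t + \frac{1}{n_t}\sum_{k=1}^{n_t} Q_t(x_{t-1};\hat{\xi}_{t,k})\); since \(\calQ_{t-1}(x_{t-1})\) is the minimum over all feasible \(\lambda \ge 0\), the claimed inequality follows. There is no real obstacle here beyond recognizing the right dual candidate: Assumption~\ref{assum:NominalStrictFeasibility} enters only indirectly, through its role in licensing Theorem~\ref{thm:FiniteDimensionalDual}, and the Lipschitz hypothesis is used precisely to make \(\lambda_0 = l_t\) large enough to cancel the growth of \(Q_t(x_{t-1};\cdot)\) away from the data points.
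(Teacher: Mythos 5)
Your proposal is correct and is essentially the paper's own proof: both take the feasible dual candidate \(\lambda_0 = l_t\), \(\lambda_j = 0\) for \(j \ge 1\) in~\eqref{eq:MDRO-FiniteDimensionalRecursion} and use the Lipschitz hypothesis to conclude that each inner supremum is attained at \(\hat{\xi}_{t,k}\) and equals \(Q_t(x_{t-1};\hat{\xi}_{t,k})\). Your version simply spells out the Lipschitz estimate that the paper leaves as a one-line remark.
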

\begin{proof}{Proof}
    Take a feasible solution \(\lambda_0=l_t\) and \(\lambda_j=0\) for \(j=1,\dots,m_t\) in~\eqref{eq:MDRO-FiniteDimensionalRecursion} of Theorem~\ref{thm:FiniteDimensionalDual}.
    By the Lipschitz continuity assumption, the supremum is attained at \(\hat{\xi}_{t,k}\) for each \(k=1,\dots,n_t\). 
\end{proof}

\subsection{Out-of-Sample Performance Guarantee}

A major motivation for using Wasserstein DR-MCO models is the out-of-sample performance guarantee, which ensures that the decisions evaluated on the true probability distribution would perform no worse than the in-sample training with high probability.
To begin with, we say that a probability measure \(\mu\in\calW_t\) is sub-Gaussian if \(\sup_{u:\nvert{u}=1}\int_{\Xi_t}\exp(C(u^\transpose\xi)^2)\dif\mu(\xi)<+\infty\) for some constant \(C>0\)~\cite[Chapter 2.5]{vershynin2018high}; or it has finite third moments if \(\int_{\Xi_t}\nvert{\xi}^3\dif\mu(\xi)<+\infty\).
Our discussion is based on the following version of concentration inequality.

\begin{theorem}[{\cite[Theorem 2]{fournier2015rate}}]\label{thm:MeasureConcentrationWasserstein}
    Fix any probability measure \(\nu_t\in\calM^\Prob(\Xi_t)\) in stage \(t\) and let \(\hat{\nu}_t\) denote the empirical measure constructed from \(n_t\) independent and identically distributed (iid) samples of \(\nu_t\).
    Then the probability $\bbP\bigl(W_t(\nu_t,\hat{\nu}_t)>\rho_{t,0}\bigr)$ is bounded from above by
        \begin{numcases}{C_t\exp\Bigl(-C'_t n_t\rho_{t,0}^{\delta_t}\Bigr)\mathbf{1}_{\le1}(\rho_{t,0})+}
        C_t\exp\Bigl(-C'_t n_t\rho_{t,0}^2\Bigr)\mathbf{1}_{>1}(\rho_{t,0}), & if \(\nu_t\) is sub-Gaussian and \(\delta_t\ge 3\), \label{eq:MeasureConcentrationSubGaussian}\\
        C''_t(n_t\rho_{t,0}^2)^{-1}, & if \(\nu_t\) has finite third moments,\label{eq:MeasureConcentrationFinite3rdMoments}
    \end{numcases}
    where $\mathbf{1}_{\le1},\mathbf{1}_{>1}$ are indicator functions for intervals $(0,1]$ and $(1,+\infty)$, and the constants \(C_t,C'_t,C''_t>0\) depend only on \(\nu_t\).
\end{theorem}

The measure concentration bound in~\eqref{eq:MeasureConcentrationSubGaussian} becomes slightly more intricate when the dimension of the uncertainty \(\delta_t\le2\) (see the details in~\cite{fournier2015rate}), so we focus our discussion below on the other cases.

The out-of-sample performance refers to the evaluation of the solutions and policies obtained from solving DR-MCO~\eqref{eq:MDRO-ExtensiveForm} on the true probability measures \(\nu_t\) for each \(t\in\calT\).
To be precise, fix any optimal policy given by the DR-MCO~\eqref{eq:MDRO-ExtensiveForm}, i.e., an optimal initial stage state \(x_1^*\in\argmin_{x_1\in\calX_1}\{f_1(x_0,x_1;\xi_1)+\calQ_1(x_1)\}\), and a collection of mappings \(x_t^*:\calX_{t-1}\times\Xi_t\to\calX_t\) for \(t=2,\dots,T\), such that 
\begin{enumerate}
    \item \(x_t^*(x_{t-1},\cdot)\) is Borel measurable for any \(x_{t-1}\in\calX_{t-1}\);
    \item \(x_t^*(x_{t-1},\xi_t)\in\argmin_{x_t\in\calX_t}\{f_t(x_{t-1},x_t;\xi_t)+\calQ_t(x_t)\}\) for any \(x_{t-1}\in\calX_{t-1}\) and \(\xi_t\in\Xi_t\).
\end{enumerate}
We define recursively the costs in the out-of-sample evaluation cost-to-go functions as
\begin{equation}\label{eq:OutOfSampleEvaluation-Recursion}
    \calQ_{t-1}^\Eval(x_{t-1}):=\int_{\Xi_t}\bigl[f_t(x_{t-1},x_t^*(x_{t-1},\xi_t);\xi_t)+\calQ_t^\Eval(x_t^*(x_{t-1},\xi_t))\bigr]\dif\nu_t(\xi_t),
\end{equation}
with \(\calQ_T^\Eval(x_T)=0\) for any \(x_T\in\calX_T\).
Then the out-of-sample evaluation mean cost associated with this policy is defined as \(v^\Eval:=f_1(x_0,x_1^*;\xi_1)+\calQ_1^\Eval(x_1^*)\).
The next theorem provides a lower bound on the probability that the event \(v^\Eval\le Q_1(x_0;\xi_1)\) happens, which is often known as the out-of-sample performance guarantee.

\begin{theorem}\label{thm:OutOfSamplePerformance}
    Fix any probability measure \(\nu_t\in\calW_t\) and let \(\hat{\nu}_t\) denote the empirical measure from \(n_t\) iid samples of \(\nu_t\) for all stages \(t\in\calT\). 
    Assume that \(\bangle{g_{t,j}}{\nu_t}\le\rho_{t,j}\) for \(j=1,\dots,m_t\) and \(t\in\calT\).
    Then for any \(\alpha\in(0,1)\), we have \(v^\Eval\le Q_1(x_0;\xi_1)\) with probability at least \(\alpha\) if either of the following conditions holds for each \(t\in\calT\):
    \begin{enumerate}
        \item the probability measure \(\nu_t\) is sub-Gaussian, $\delta_t\ge 3$, and 
            \[n_t\cdot\min\{\rho_{t,0}^{\delta_t},\rho_{t,0}^2\}\ge\frac{1}{C'_t}\left[\ln{C_t}-\ln\left(1-\alpha^{1/(T-1)}\right)\right],\]
        \item the probability measure \(\nu_t\) has finite third moments, $\delta_t\ge2$, and 
            \[n_t\cdot\min\{\rho_{t,0}^{\delta_t},\rho_{t,0}^2\}\ge \max\left\{\frac{1}{C'_t}\left[\ln{2C_t}-\ln\left(1-\alpha^{1/(T-1)}\right)\right],\frac{2C''_t}{1-\alpha^{1/(T-1)}}\right\},\]
    \end{enumerate}
    where \(C_t,C'_t,\) and \(C''_t\) are the positive constants in Theorem~\ref{thm:MeasureConcentrationWasserstein}, that depend only on \(\nu_t\).
    For fixed $\alpha$, the right-hand sides of conditions 1 and 2 are $\calO(\ln{T})$ and $\calO(T)$, respectively, as $T\to\infty$.
\end{theorem}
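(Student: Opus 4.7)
The plan is to reduce the out-of-sample guarantee to a single event, namely that the unknown true measure $\nu_t$ lies in the Wasserstein ambiguity set $\calP_t$ for every stage $t=2,\dots,T$. Once this event occurs, one should be able to show that the evaluation cost-to-go functions $\calQ_t^\Eval$ are dominated by the worst-case cost-to-go functions $\calQ_t$, and hence $v^\Eval\le Q_1(x_0;\xi_1)$. The quantitative part of the theorem then boils down to estimating the probability of this event using the measure concentration inequality from Theorem~\ref{thm:MeasureConcentrationWasserstein} together with the independence of samples across stages.

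The first step is a backward induction on $t$. The base case $\calQ_T^\Eval(x_T)=0=\calQ_T(x_T)$ is immediate. For the inductive step, I would use the identity $f_t(x_{t-1},x_t^*(x_{t-1},\xi_t);\xi_t)+\calQ_t(x_t^*(x_{t-1},\xi_t))=Q_t(x_{t-1};\xi_t)$, which holds by the optimality of $x_t^*(x_{t-1},\xi_t)$ in~\eqref{eq:MDRO-ValueFunction}. Combining this with the inductive hypothesis $\calQ_t^\Eval\le\calQ_t$ gives
\begin{equation*}
\calQ_{t-1}^\Eval(x_{t-1})\le\int_{\Xi_t}Q_t(x_{t-1};\xi_t)\dif\nu_t(\xi_t)\le\sup_{p\in\calP_t}\int_{\Xi_t}Q_t(x_{t-1};\xi_t)\dif p(\xi_t)=\calQ_{t-1}(x_{t-1}),
\end{equation*}
where the second inequality uses precisely the assumption $\nu_t\in\calP_t$. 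Specializing to $t=1$ and adding $f_1(x_0,x_1^*;\xi_1)$ on both sides yields $v^\Eval\le Q_1(x_0;\xi_1)$.

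The second step is to bound $\Prob(\nu_t\in\calP_t)$ in each stage. The moment-type constraints $\bangle{g_{t,j}}{\nu_t}\le\rho_{t,j}$ hold deterministically by assumption, so the only random constraint is $W_t(\nu_t,\hat{\nu}_t)\le\rho_{t,0}$. Applying Theorem~\ref{thm:MeasureConcentrationWasserstein} in each of the two cases and requiring $\Prob(W_t(\nu_t,\hat{\nu}_t)>\rho_{t,0})\le 1-\alpha^{1/(T-1)}$ gives exactly the two threshold inequalities stated in the theorem after taking logarithms (sub-Gaussian case) or inverting (finite third moment case). Finally, since the samples $\hat{\xi}_{t,k}$ are independent across stages, the events $\{\nu_t\in\calP_t\}$ for $t=2,\dots,T$ are independent, so
\begin{equation*}
\Prob\bigl(v^\Eval\le Q_1(x_0;\xi_1)\bigr)\ge\prod_{t=2}^T\Prob(\nu_t\in\calP_t)\ge\bigl(\alpha^{1/(T-1)}\bigr)^{T-1}=\alpha.
\end{equation*}

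There is no serious obstacle here; the argument is essentially bookkeeping. The only point that requires a little care is to ensure measurability of the event $\{\nu_t\in\calP_t\}$ and of $v^\Eval$ as functions of the samples, which is why the Borel measurability of $x_t^*(x_{t-1},\cdot)$ was built into the definition of the policy preceding~\eqref{eq:OutOfSampleEvaluation-Recursion}. One should also double-check that the exponent $T-1$ (rather than $T$) is correct, which it is because stage~$1$ is deterministic ($\Xi_1=\{\xi_1\}$, $\calP_1=\{\delta_{\xi_1}\}$) and contributes no random event.
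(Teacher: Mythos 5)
Your proposal is correct and follows essentially the same route as the paper's proof: reduce to the event $\{\nu_t\in\calP_t,\ t=2,\dots,T\}$, bound its probability via Theorem~\ref{thm:MeasureConcentrationWasserstein} and stagewise independence, and establish $\calQ_t^\Eval\le\calQ_t$ on that event by backward induction using the optimality of $x_t^*$. The only difference is cosmetic ordering of the two steps and your added (welcome but not strictly necessary here) remark on measurability.
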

\begin{proof}
    We first verify that if either of the conditions is satisfied, then the probability \(\bbP(W_t(\nu_t,\hat{\nu}_t)>\rho_{t,0})\le 1-\alpha^{1/(T-1)}\).
    For condition 1 (sub-Gaussian case),
    \begin{itemize}
        \item when $\rho_{t,0}\le1$, the condition reduces to 
            \[
                n_t\rho_{t,0}^{\delta_t}\ge\frac{1}{C'_t}\left[\ln{C_t}-\ln\left(1-\alpha^{1/(T-1)}\right)\right],
            \]
            which is equivalent to $C_t\exp(-C'_tn_t\rho_{t,0}^{\delta_t})\le 1-\alpha^{1/(T-1)}$;
        \item when $\rho_{t,0}>1$, the condition reduces to
            \[
                n_t\rho_{t,0}^2\ge\frac{1}{C'_t}\left[\ln{C_t}-\ln\left(1-\alpha^{1/(T-1)}\right)\right],
            \]
            which is equivalent to $C_t\exp(-C'_tn_t\rho_{t,0}^2)\le 1-\alpha^{1/(T-1)}$.
    \end{itemize}
    In both situations, by~\eqref{eq:MeasureConcentrationSubGaussian} in Theorem~\ref{thm:MeasureConcentrationWasserstein}, we know that $\bbP(W_t(\nu_t,\hat{\nu}_t)>\rho_{t,0})\le 1-\alpha^{1/(T-1)}$.\\
    For condition 2 (finite third-moment case),
    \begin{itemize}
        \item when $\rho_{t,0}\le1$, the condition implies
            \[
                n_t\rho_{t,0}^{\delta_t}\ge\frac{1}{C'_t}\left[\ln{2C_t}-\ln\left(1-\alpha^{1/(T-1)}\right)\right],
            \]
            which is equivalent to $C_t\exp(-C'_tn_t\rho_{t,0}^{\delta_t})\le \frac{1-\alpha^{1/(T-1)}}{2}$, and
            \[
                n_t\rho_{t,0}^{2}\ge n_t\rho_{t,0}^{\delta_t}\ge\frac{2C''_t}{1-\alpha^{1/(T-1)}},
            \]
            which is equivalent to $C''(n_t\rho_{t,0}^2)^{-1}\le \frac{1-\alpha^{1/(T-1)}}{2}$, so the sum in~\eqref{eq:MeasureConcentrationFinite3rdMoments} is bounded by $1-\alpha^{1/(T-1)}$;
        \item when $\rho_{t,0}>1$, the condition implies
            \[
                n_t\rho_{t,0}^{2}\ge\frac{2C''_t}{1-\alpha^{1/(T-1)}},
            \]
            which is equivalent to $C''_t(n_t\rho_{t,0}^2)^{-1}\le \frac{1-\alpha^{1/(T-1)}}{2}<1-\alpha^{1/(T-1)}$, so by~\eqref{eq:MeasureConcentrationFinite3rdMoments} we have the desired bound.
    \end{itemize}
    By the assumption on the iid sampling of \(\hat{\nu}_t\) and that \(\bangle{g_{t,j}}{\nu_t}\le\rho_{t,j}\) for \(j=1,\dots,m_t\), the event \(E:=\{\nu_t\in\calP_t\text{ for }t=2,\dots,T\}\) has the probability
    \[
        \bbP(E)=\bbP\{W_t(\nu_t,\hat{\nu}_t)\le\rho_{t,0}\text{ for all }t\in\calT\}= \prod_{t=2}^T\bbP(W_t(\nu_t,\hat{\nu}_t)\le\rho_{t,0})\ge\alpha.
    \]
    To show that $v^{\Eval}\le Q_1(x_0;\xi_1)$ with probability $\alpha$, we claim that \(\calQ_t^\Eval(x_t)\le\calQ_t(x_t)\) for all \(x_t\in\calX_t\) and \(t\in\calT\) everywhere on \(E\).
    Note that \(\calQ_T^\Eval(x_T)=\calQ_T(x_T)=0\) for all \(x_T\in\calX_T\).
    Now if \(\calQ_t^\Eval(x_t)\le\calQ_t(x_t)\) for all \(x_t\in\calX_t\), then
    \begin{align*}
        f_t(x_{t-1},x_t^*(x_{t-1},\xi_t);\xi_t)+\calQ_t^\Eval(x_t^*(x_{t-1},\xi_t))
        &\le f_t(x_{t-1},x_t^*(x_{t-1},\xi_t);\xi_t)+\calQ_t(x_t^*(x_{t-1},\xi_t))\\
        & = \min_{x_t\in\calX_t}\{f_t(x_{t-1},x_t;\xi_t)+\calQ_t(x_t)\}
        = Q_t(x_{t-1};\xi_t).
    \end{align*}
    Therefore, on the event \(E\), we have
    \begin{align*}
        \calQ_{t-1}^\Eval(x_{t-1})
        &=\int_{\Xi_t}\bigl[f_t(x_{t-1},x_t^*(x_{t-1},\xi_t);\xi_t)+\calQ_t^\Eval(x_t^*(x_{t-1},\xi_t))\bigr]\dif\nu_t(\xi_t)\\
        &\le\int_{\Xi_t}Q_t(x_{t-1};\xi_t)\dif\nu_t(\xi_t)
        \le\sup_{p_t\in\calP_t}\int_{\Xi_t}Q_t(x_{t-1};\xi_t)\dif p_t(\xi_t) = \calQ_{t-1}(x_{t-1}).
    \end{align*}
    This recursion shows that \(\calQ_1^\Eval(x_1)\le\calQ_1(x_1)\) for all \(x_1\in\calX_1\), and thus 
    \[
        v^\Eval=f_1(x_0,x_1^*;\xi_1)+\calQ_1^\Eval(x_1^*)\le \min_{x_1\in\calX_1}\{f_1(x_0,x_1;\xi_1)+\calQ_1(x_1)\}=Q_1(x_0;\xi_1).
    \]
    Finally for the claim on the growth rates, it suffices to check that
    \[
        \frac{1}{1-\alpha^{1/(T-1)}}\le \frac{(T-1)}{1-\alpha} \iff (T-1)\bigl(1-\alpha^{1/(T-1)}\bigr)+\alpha\ge1,
    \]
    for any $T\ge2$ and $\alpha\in(0,1)$.
    Make the substitution $S:=T-1$ and $\beta=\alpha^{1/S}$ which leaves us to check
    \[
        H_S(\beta):=S(1-\beta)+\beta^S\ge1,\quad\forall S\ge1,\beta\in(0,1).
    \]
    Note that $H_S$ is continuously differentiable with respect to $\beta$ on $[0,1]$ the derivative $H_S'(\beta)=-S+S\beta^{S-1}<0$ for $\beta\in(0,1)$ and $S\ge 1$.
    Thus it is a non-increasing function on $[0,1]$ and $H_S(1)=1^S=1$, which shows that $H_S(\beta)\ge1$ for all $\beta\in[0,1]$.
\end{proof}

We remark that the minimum of $\rho_{t,0}^{\delta_t}$ and $\rho_{t,0}^{2}$ in condition 1, and the maximum in condition 2, are due to the two regimes $\rho_{t,0}\le 1$ (converging to zero) and $\rho_{t,0}>1$ (bounded away from zero) in Theorem~\ref{thm:MeasureConcentrationWasserstein}.
Theorem~\ref{thm:OutOfSamplePerformance} shows 
the multistage traits of our DR-MCO problem.
For fixed $\alpha$, we need to increase $n_t$ or $\rho_{t,0}$ to obtain the guarantee as $T$ grows.
In particular, when the true probability measures are sub-Gaussian, condition 1's right-hand side (RHS) is $\calO(\ln(T))$ as $T$ grows.
Then, one may want to increase $\rho_{t,0}$ as $\sqrt{\ln(T)}$ or to increase $n_t$ as $\ln(T)$. When the true probability measures are not sub-Gaussian but have finite third moments (e.g., the lognormal probability measures in Section~\ref{sec:Numerical:Hydrothermal}), condition 2's RHS is $\calO(T)$. 
Hence, one may want to increase $\rho_{t,0}$ with $\sqrt{T}$, as it may be difficult to increase $n_t$ linearly with $T$.
It is worth noting that, assuming $n_t$ is fixed, the required growth of $\rho_{t,0}$ (after it exceeds $1$) does not depend on the uncertainty dimensions $\delta_t$ in both cases.  
Finally, since the true probability measures are unknown, so are the constants $C_t,C'_t$, and $C''_t$ in Theorem~\ref{thm:OutOfSamplePerformance} and thus we determine $\rho_{t,0}$ heuristically in the numerical experiments in Section~\ref{sec:Numerical}.

\subsection{Adjustable In-Sample Policy Conservatism}

Another well studied approach to guarantee out-of-sample performance is the multistage robust convex optimization (MRCO) model~\cite{georghiou2019robust,zhang2020distributionally}.
However, MRCO considers only the worst-case outcomes of the uncertainties and thus can be overly conservative.
To be precise, we define the nominal MSCO from the empirical measures \(\hat{\nu}_t\) without risk aversion by the following recursion
\begin{equation}\label{eq:MNSO-RecursiveForm}
    \calQ_{t-1}^\Nomin(x_{t-1}):=\int_{\Xi_t}Q_t^\Nomin(x_{t-1};\xi_t)\dif\hat{\nu}_t(\xi_t)=\frac{1}{n_t}\sum_{k=1}^{n_t}Q_t^\Nomin(x_{t-1};\hat{\xi}_{t,k}),
\end{equation}
where
\begin{equation}\label{eq:MNSO-ValueFunction}
    Q_t^\Nomin(x_{t-1};\xi_{t}):=\inf_{x_{t}\in\calX_{t}}f_{t}(x_{t-1},x_{t};\xi_t)+\calQ_{t}^\Nomin(x_{t}),
\end{equation}
and \(\calQ_T^\Nomin(x_T)=0\) for any \(x_T\in\calX_T\).
Any MRCO that is built directly from data could have a much larger optimal cost than the nominal MSCO with a probability growing with the numbers of samples \(n_t\) and stages \(T\), as illustrated by the following example.

\begin{example}
    Consider an MSCO with local cost functions \(f_t(x_{t-1},x_t;\xi_t):=x_t+\xi_t\) and state spaces \(\calX_t:=[0,1]\subseteq\bbR\) for all \(t\in\calT\).
    For each \(t\ge 2\), the uncertainties are described by a probability measure \(\nu_t\) on the set \(\Xi_t:=\bbR_{\ge0}\) such that for any \(C>0\), we have \(\nu_t((C,+\infty))>0\).
    Now to approximate this MSCO, suppose we are given iid samples \(\hat{\xi}_{t,1},\dots,\hat{\xi}_{t,n_t}\) from the probability measure \(\nu_t\) in each stage \(t\ge 2\).
    Then any MRCO defined by the following recursion of expected cost-to-go functions
    \[
        \calQ_{t-1}^\Robust(x_{t-1}):=\sup_{\xi_t\in\hat{\Xi}_t}\min_{x_t\in\calX_t}\{f_t(x_{t-1},x_t;\xi_t)+\calQ_t^\Robust(x_t)\},\quad \forall\,t\ge2,
    \]
    where \(\hat{\Xi}_t\supseteq\{\hat{\xi}_{t,1},\dots,\hat{\xi}_{t,n_t}\}\) is an uncertainty subset constructed from the data, would have its optimal value \(Q_1^\Robust(x_0;\xi_1)\ge x_0+\xi_1+\sum_{t=2}^T\hat{\xi}_t^{\max}\), where \(\hat{\xi}_t^{\max}:=\max\{\hat{\xi}_{t,1},\dots,\hat{\xi}_{t,n_t}\}\).
    The optimal value of the corresponding nominal MSCO is \(Q_1^\Nomin(x_0;\xi_1)=x_0+\xi_1+\sum_{t=2}^T\hat{\xi}_t^{\mean}\), where \(\hat{\xi}_t^{\mean}:=\frac{1}{n_t}\sum_{k=1}^{n_t}\hat{\xi}_{t,k}\).
    Therefore, for any constant \(C>\max_{t=2,\dots,T}\bbE[\xi_t]\), we can show that
    \begin{align*}
        \bbP\Big\{Q_1^\Robust(x_0;\xi_1)-Q_1^\Nomin(x_0;\xi_1)>C\Big\}
        &= 1-\prod_{t=2}^T\Big((\nu_t[0,2C])^{n_t}+\bbP\big\{\hat{\xi}_t^{\mean}> C\big\}\Big),
    \end{align*}
    which goes to 1 as \(n_t\to\infty\).
\end{example}

The example suggests for MRCO models to be moderately conservative, it is sometimes essential to disregard a subset of the data samples, which, however, causes difficulty in ensuring the out-of-sample performance guarantee.
In contrast, the difference of the optimal values between the DR-MCO and the nominal MSCO \(Q_1(x_0;\xi_1)-Q_1^\Nomin(x_0;\xi_1)\), and consequently those between the DR-MCO and the true MSCO $Q_1(x_0;\xi_1)-v^\Eval$ can be bounded by the Wasserstein distances \(\rho_{t,0}\), as shown below. 

\begin{theorem}\label{thm:InSampleConservatism}
    Suppose the value function \(Q_t(x_{t-1};\xi_t)\) is \(l_t\)-Lipschitz continuous in \(\xi_t\) for any \(x_{t-1}\in\calX_{t-1}\), for each stage \(t\in\calT\).
    Under Assumption~\ref{assum:NominalStrictFeasibility}, the difference of optimal values between the DR-MCO and the nominal MSCO satisfies
    \[
        Q_1(x_0;\xi_1)-Q_1^\Nomin(x_0;\xi_1)\le\sum_{t=2}^{T}l_t\rho_{t,0}.
    \]
    Consequently, if $Q_t^\Nomin(x_{t-1};\xi_t)$ is also $l_t$-Lipschitz continuous in $\xi_t$ and $\rho_{t,0}$ satisfies either condition in Theorem~\ref{thm:OutOfSamplePerformance}, then with probability $\alpha\in(0,1)$, the difference of optimal values between the DR-MCO and the true MSCO satisfies
    \[
        Q_1(x_0;\xi_1)-v^\Eval\le 2\sum_{t=2}^{T}l_t\rho_{t,0}.
    \]
\end{theorem}
\begin{proof}
    We first observe that if there exists \(\epsilon\ge 0\) such that \(\sup_{x_t\in\calX_t}\svert{\calQ_t(x_t)-\calQ_t^{\Nomin}(x_t)}\le \epsilon\), then the value functions satisfy \(Q_t(x_{t-1};\xi_t)-Q_t^{\Nomin}(x_{t-1};\xi_t)\le\epsilon\) by the definitions~\eqref{eq:MDRO-ValueFunction} and~\eqref{eq:MNSO-ValueFunction}.
    Now we prove by recursion that \(\calQ_t(x_t)-\calQ_t^\Nomin(x_t)\le\sum_{s>t}l_s\rho_{s,0}\) for any \(x_t\in\calX_t\), which holds trivially for \(t=T\).
    For any \(t\in\calT\), we have
    \begin{align*}
        \calQ_{t-1}(x_{t-1})-\calQ_{t-1}^{\Nomin}(x_{t-1})
        =&
        \left(\sup_{p_t\in\calP_t}\int_{\Xi_t}Q_t(x_{t-1};\xi_t)\dif p_t(\xi_t)-\int_{\Xi_t}Q_t(x_{t-1};\xi_t)\dif\hat{\nu}_t(\xi_t)\right)\\
        +&\int_{\Xi_t}\left(Q_t(x_{t-1};\xi_t)-Q_t^{\Nomin}(x_{t-1};\xi_t)\right)\dif\hat{\nu}_t(\xi_t)\\
        \
        \le & l_t\rho_{t,0} +\sum_{s>t}l_s\rho_{s,0}
        =\sum_{s>t-1}l_s\rho_{s,0},
    \end{align*}
    where the first part before the inequality is bounded by \(l_t\rho_{t,0}\) using Corollary~\ref{cor:KRDualityFormula}, and the second part is bounded by our observation above.
    This recursion shows that \(\calQ_1(x_1)-\calQ_1^\Nomin(x_1)\le\sum_{t=2}^T l_t\rho_{t,0}\), hence the first part.
    For the second part, we do the same recursion by assuming $\svert{\calQ_t^\Eval(x_t)-\calQ_t^\Nomin(x_t)}\le\sum_{s>t}l_s\rho_{s,0}$ for any $x_t\in\calX_t$, which is true for $t=T$ due to Theorem~\ref{thm:OutOfSamplePerformance}.
    Then for any $t\in\calT$, we have
    \begin{align*}
        \svert{\calQ_{t-1}^{\Eval}(x_{t-1})-\calQ_{t-1}^{\Nomin}(x_{t-1})}
        \le&
        \left\vert\int_{\Xi_t}Q_t^\Nomin(x_{t-1};\xi_t)\dif\nu_t(\xi_t)-\int_{\Xi_t}Q_t^\Nomin(x_{t-1};\xi_t)\dif\hat{\nu}_t(\xi_t)\right\vert\\
        +&\int_{\Xi_t}\left\vert Q_t^\Eval(x_{t-1};\xi_t)-Q_t^{\Nomin}(x_{t-1};\xi_t)\right\vert\dif\nu_t(\xi_t)\\
        \
        \le & l_t\rho_{t,0} +\sum_{s>t}l_s\rho_{s,0}
        =\sum_{s>t-1}l_s\rho_{s,0},
    \end{align*}
    where $\left\vert\int_{\Xi_t}Q_t^\Nomin(x_{t-1};\xi_t)\dif\nu_t(\xi_t)-\int_{\Xi_t}Q_t^\Nomin(x_{t-1};\xi_t)\dif\hat{\nu}_t(\xi_t)\right\vert\le l_t\rho_{t,0}$ is a consequence of $W_t(\hat{\nu}_t,\nu_t)\le\rho_{t,0}$ by the Kantorovich-Rubinstein duality~\cite[Remark 6.5]{villani2008optimal}.
    Finally, we have 
    \[
    \calQ_1(x_1)-\calQ_1^\Eval(x_1)\le \calQ_1(x_1)-\calQ_1^\Nomin(x_1)+\svert{\calQ_1^\Eval(x_1)-\calQ_1^\Nomin(x_1)}\le 2\sum_{t=2}^{T}l_t\rho_{t,0},
    \]
    which completes the proof by the same observation at the beginning of the proof.
\end{proof}

Theorem~\ref{thm:InSampleConservatism} shows that the conservatism of the DR-MCO can be adjusted linearly with the Wasserstein distance bound \(\rho_{t,0}\), assuming the Lipschitz continuity of the value functions in the uncertainty.
Assuming that $Q_t(x_{t-1};\xi_t)$ are uniformly Lipschitz continuous with respect to $\xi_t$ for all $t\in\calT$, Theorems~\ref{thm:OutOfSamplePerformance} and~\ref{thm:InSampleConservatism} imply that an optimal choice of the Wasserstein radius, i.e., the smallest radius $\rho_{t,0}$ satisfying the conditions in Theorem~\ref{thm:OutOfSamplePerformance}, will lead to a difference of $\calO(T(\ln{T})^{1/2})$ (the sub-Gaussian case) or $\calO(T^{3/2})$ (the finite third moment case) between the optimal value of our DR-MCO model and that of the true MSCO model.

\section{Dual Dynamic Programming Algorithm}
\label{sec:Algorithm}

In this section, we focus on different realizations of the single stage subproblem oracles (SSSO) for DR-MCO with Wasserstein ambiguity sets that would guarantee the convergence of the dual dynamic programming (DDP) algorithms.

\subsection{Review of Dual Dynamic Programming Algorithms}
\label{sec:Algorithm:DDPReview}

Recall that for any convex function \(\calQ:\calX\to\bbR\cup\{+\infty\}\), an affine function \(\calV:\calX\to\bbR\) is called a (valid) linear cut if \(\calQ(x)\ge\calV(x)\) for all \(x\in\calX\).
A collection of such valid linear cuts \(\{\calV^j\}_{1\le j\le i}\) defines a valid under-approximation \(\ulcQ^i(x):=\max_{1\le j\le i}\calV^j(x)\) of \(\calQ(x)\).
Similarly by convexity, given a collection of overestimate values \(v^j\ge\calQ(x^j)\) for \(j=1,\dots,i\), we can define a valid over-approximation by the convex envelope \(\olcQ^i(x):=\conv_{1\le j\le i}(v^j+\iota_{x^j}(x))\), where \(\iota_{x^j}(x)=0\) when \(x=x^j\) and \(+\infty\) otherwise, is the convex indicator function centered at \(x^j\).
The validness of these approximations \(\ulcQ(x)\le\calQ(x)\le\olcQ(x)\) for all \(x\in\calX\) suggests that we may use them in the place of \(\calQ\) for recursive updates during a stagewise decomposition algorithm.

To see how the recursive updates may work, let us assume temporarily in this subsection that the worse-case probability measure \(p_t^*\) in~\eqref{eq:MDRO-SimplifiedCostToGo} exists and can be found.
Given any under-approximation \(\ulcQ_t\) for \(\calQ_t\), for any \(\hat{\xi}_t\in\Xi_t\), we can generate a linear cut \(V_t(x_{t-1};\xi_t):=\ubar{v}(\hat{\xi}_t)+u(\hat{\xi}_t)^\transpose(x_{t-1}-\tilde{x}_{t-1})\) for the value function \(Q_t(x_{t-1};\hat{\xi}_t)\) with some \(\tilde{x}_{t-1}\in\calX_{t-1}\), where \(\ubar{v}(\hat{\xi}_t)\) and \(u(\hat{\xi}_t)\) are the optimal value and an optimal dual solution to the following Lagrangian dual of the under-approximation problem that is parametrized by \(\hat{\xi}_t\):
\begin{equation}\label{eq:RecursiveUnderApproximation}
    \ubar{v}(\hat{\xi}_t):=\sup_{u\in\bbR^{d_{t-1}}}\min\left\{f_t(z_t,x_t;\hat{\xi}_t)+\ulcQ_t(x_t)+u^\transpose(\tilde{x}_{t-1}-z_t):x_t\in\calX_t,z_t\in\bbR^{d_{t-1}}\right\},
\end{equation}
assuming that \(u(\hat{\xi}_t)\) exists.
Then we can aggregate these linear cuts \(V_t(x_{t-1};\xi_t)\) into a linear cut \(\calV_{t-1}(\cdot):=\bbE_{\xi_t\sim p_t^*}V_t(\cdot;\xi_t)\) for \(\calQ_{t-1}\), where the expectation is taken componentwise with respect to the probability measure \(p_t^*\).
In this way, we can use the under-approximation \(\ulcQ_t\) for \(\calQ_t\) to update the under-approximation \(\ulcQ_{t-1}\) for \(\calQ_{t-1}\) by the aggregated linear cut \(\calV_{t-1}\).

Likewise, given any over-approximation \(\olcQ_t\) for \(\calQ_t\), for any \(\hat{\xi}_t\in\Xi_t\) and \(\tilde{x}_{t-1}\in\calX_{t-1}\), we can solve the following over-estimation problem
\begin{equation}\label{eq:RecursiveOverApproximation}
    \bar{v}(\hat{\xi}_t):=\min\left\{f_t(\tilde{x}_{t-1},x_t;\hat{\xi}_t)+\olcQ_t(x_t):x_t\in\calX_t\right\},
\end{equation}
which gives an overestimate value \(\bar{v}(\hat{\xi}_t)\ge Q_t(\tilde{x}_{t-1};\hat{\xi}_t)\).
Again we can aggregate the overestimate value by setting \(v_{t-1}:=\bbE_{\xi_t\sim p_t^*}\bar{v}(\xi_t)\), which by definition satisfies \(v_{t-1}\ge\calQ_{t-1}(\tilde{x}_{t-1})\) and thus can be used to update the over-approximation \(\olcQ_{t-1}\) for \(\calQ_{t-1}\).

There are, however, some potential issues with this recursive approximation method.
First, the supremum in the Lagrangian dual problem~\eqref{eq:RecursiveUnderApproximation} may not be attained, which could happen if \(\ubar{v}(\hat{\xi}_t)=+\infty\) (i.e., \(\tilde{x}_{t-1}\) is an infeasible state for \(\hat{\xi}_t\)) or if any neighborhood of \(\tilde{x}_{t-1}\) contains such an infeasible state.
In this case, we may fail to generate a linear cut \(V_t(\cdot;\hat{\xi}_t)\).
Second, Lipschitz constants of the linear cuts \(\calV_{t-1}\) may be affected by the under-approximation \(\ulcQ_t\), causing a worse approximation quality.
This may happens when \(\tilde{x}_{t-1}\) is an extreme point of \(\calX_{t-1}\).
In fact, it is shown in~\cite{zhang2020distributionally} that the Lipschitz constants of \(\ulcQ_t\) could exceed those of \(\calQ_t\) and grow with the total number of stages \(T\).
Third, the over-approximation function evaluates to \(+\infty\) at any point that is not in the convex hull of previously visited points, which makes the gap \(\olcQ_t(x)-\ulcQ_t(x)\) less useful as an estimate of the quality of the current solutions or policies.

To remedy these issues, we consider a technique called \emph{Lipschitzian regularization}.
Given regularization factors \(M_t>0\), we define the regularized local cost function as
\begin{equation}\label{eq:RegularizedCostFunction}
    f_t^\Reg(x_{t-1},x_t;\xi_t):=\inf_{z_t\in\bbR^{d_{t-1}}} f_t(z_t,x_t;\xi_t)+M_t\nvert{x_{t-1}-z_t},
\end{equation}
and the regularized value function
\begin{equation}\label{eq:RegularizedValueFunction}
    Q_t^\Reg(x_{t-1};\xi_t):=\min_{x_t\in\calX_t}f_t^\Reg(x_{t-1},x_t;\xi_t)+\calQ_t^\Reg(x_t),
\end{equation}
recursively for \(t=T,T-1,\dots,2\), where \(\calQ_t^\Reg\) is the regularized expected cost-to-go function defined as
\begin{equation}\label{eq:RegularizedCostToGoFunction}
    \calQ_t^\Reg(x_t):=\sup_{p_{t+1}\in\calP_{t+1}}\bbE_{\xi_{t+1}\sim p_{t+1}} Q_{t+1}^\Reg(x_t;\xi_{t+1}),
\end{equation}
for \(t\le T-1\), and \(\calQ_T^\Reg(x_T)\equiv 0\) for any \(x_T\in\calX_T\).
It is then straightforward to check that \(Q_t^\Reg(x_{t-1};\xi_t)\) is uniformly \(M_t\)-Lipschitz continuous in \(x_{t-1}\) for each \(\xi_t\in\Xi_t\), and consequently \(\calQ_{t-1}^\Reg\) is also \(M_t\)-Lipschitz continuous.
In the definitions~\eqref{eq:RecursiveUnderApproximation} and~\eqref{eq:RecursiveOverApproximation}, we can replace accordingly the original cost functions \(f_t\) with the regularized cost function \(f_t^\Reg\), which would guarantee the \(M_t\)-Lipschitz continuity of the generated cuts and allow us to enhance the over-approximation to be \(M_t\)-Lipschitz continuous.

Lipschitzian regularization in general only gives under-approximations of the true value and expected cost-to-go functions.
We need the following assumption to preserve the optimality and feasibility of the solutions.
\begin{assumption}\label{assum:ExactRegularization}
    For the given regularization factors \(M_t>0\), \(t\in\calT\), the optimal value of the regularized DR-MCO satisfies 
    \[
        \min_{x_1\in\calX_1}f_1(x_0,x_1;\xi_1)+\calQ_1(x_1)=\min_{x_1\in\calX_1}f_1(x_0,x_1;\xi_1)+\calQ_1^\Reg(x_1)
    \] 
    and the sets of optimal first-stage solutions are the same, i.e.,
    \(
        \argmin\{f_1(x_0,x_1;\xi_1)+\calQ_1^\Reg(x_1):x_1\in\calX_1\}=\argmin\{f_1(x_0,x_1;\xi_1)+\calQ_1(x_1):x_1\in\calX_1\}.
    \)
\end{assumption}

We remark by the following proposition that Assumption~\ref{assum:ExactRegularization} can be satisfied in any problem that already have uniformly Lipschitz continuous value function \(Q_t(\cdot;\xi_t)\) for all \(\xi_t\in\Xi_t\).
\begin{proposition}[{\cite[Proposition~4]{zhang2020distributionally}}]\label{prop:LipschitzRegularization}
    Suppose each state space \(\calX_t\subseteq\bbR^{d_t}\) is full dimensional, i.e., \(\mathrm{int}\calX_t\neq\varnothing\).
    Then Assumption~\ref{assum:ExactRegularization} holds if for each stage \(t\ge 2\), the value function \(Q_t(\cdot;\xi_t)\) is \(M_t\)-Lipschitz continuous for any \(\xi_t\in\Xi_t\).
\end{proposition}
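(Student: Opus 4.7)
The plan is to establish by backward induction on $t$ the stronger conclusion that $\calQ_t^\Reg(x_t) = \calQ_t(x_t)$ for every $x_t \in \calX_t$ and every $t \in \calT$; this immediately implies Assumption~\ref{assum:ExactRegularization}, since then $f_1(x_0,\cdot;\xi_1) + \calQ_1^\Reg(\cdot)$ and $f_1(x_0,\cdot;\xi_1) + \calQ_1(\cdot)$ coincide on $\calX_1$, yielding identical optimal values and identical argmin sets. The base case $t = T$ is immediate from $\calQ_T^\Reg \equiv \calQ_T \equiv 0$.

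For the inductive step, I would fix $x_{t-1} \in \calX_{t-1}$ and $\xi_t \in \Xi_t$, unfold $Q_t^\Reg(x_{t-1};\xi_t)$ by substituting the definition of $f_t^\Reg$, use the inductive hypothesis to replace $\calQ_t^\Reg(x_t)$ by $\calQ_t(x_t)$ (valid since the outer infimum keeps $x_t \in \calX_t$), and interchange the infima over $x_t$ and $z_t$:
\begin{align*}
    Q_t^\Reg(x_{t-1};\xi_t)
    &= \inf_{x_t \in \calX_t,\, z_t \in \bbR^{d_{t-1}}} \bigl[f_t(z_t,x_t;\xi_t) + M_t\nvert{x_{t-1} - z_t} + \calQ_t(x_t)\bigr]\\
    &= \inf_{z_t \in \bbR^{d_{t-1}}} \bigl[Q_t(z_t;\xi_t) + M_t\nvert{x_{t-1} - z_t}\bigr].
\end{align*}
The inner function $Q_t(\cdot;\xi_t)$ is convex, as the partial infimum in $x_t$ of the jointly convex $f_t(\cdot,\cdot;\xi_t) + \calQ_t(\cdot)$, and by hypothesis it is $M_t$-Lipschitz on $\calX_{t-1}$, which has nonempty interior. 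Invoking Lemma~\ref{lemma:ConvexInfimalConvolutionLipschitz} with $f = Q_t(\cdot;\xi_t)$, $\calX = \calX_{t-1}$, and $M = M_t$ then gives $Q_t^\Reg(x_{t-1};\xi_t) = Q_t(x_{t-1};\xi_t)$. Integrating against any $p_t \in \calP_t$ and taking the supremum shows $\calQ_{t-1}^\Reg(x_{t-1}) = \calQ_{t-1}(x_{t-1})$, closing the induction.

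The main technical nuisance is that $Q_t(\cdot;\xi_t)$ may take the value $+\infty$ on $\bbR^{d_{t-1}} \setminus \calX_{t-1}$, whereas Lemma~\ref{lemma:ConvexInfimalConvolutionLipschitz} is phrased for finite-valued $f : \bbR^d \to \bbR$. I would handle this by observing that any $z_t$ with $Q_t(z_t;\xi_t) = +\infty$ contributes nothing to the infimum, so one may equivalently replace $Q_t(\cdot;\xi_t)$ by the convex, $M_t$-Lipschitz, everywhere-finite extension $\bar{Q}(x) := \inf_{z \in \calX_{t-1}} \{Q_t(z;\xi_t) + M_t\nvert{x - z}\}$, which agrees with $Q_t(\cdot;\xi_t)$ on $\calX_{t-1}$ and leaves the infimal convolution unchanged; the lemma then applies to $\bar{Q}$ verbatim, and the full-dimensionality hypothesis $\mathrm{int}\,\calX_t \neq \varnothing$ is used precisely where the lemma's proof handles boundary points of the Lipschitz domain.
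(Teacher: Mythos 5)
Your proof is correct and follows essentially the same route as the paper: backward induction on \(t\) showing \(\calQ_t^\Reg=\calQ_t\) on \(\calX_t\), reducing the inductive step to the infimal-convolution identity of Lemma~\ref{lemma:ConvexInfimalConvolutionLipschitz} applied to \(Q_t(\cdot;\xi_t)\). You simply spell out the interchange of infima and the extended-real-valuedness technicality that the paper's one-line ``by definition'' argument glosses over.
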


Now we can present the (consecutive) DDP algorithm based on the SSSO. 
In Algorithm~\ref{alg:ConsecutiveDualDP}, each iteration \(i\in\bbN\) consists of two steps: the noninitial stage step and the initial stage step.
In the noninitial stage step, we evaluate the SSSO from stage \(t=2\) to \(t=T\) to collect feasible states \(x_t^i\), overestimate values \(v_{t-1}^i\), and a valid linear cut \(\calV_{t-1}^i\) for updating the approximations \(\ulcQ_{t-1}^i\) and \(\olcQ_{t-1}^i\).
Then we evaluate the initial stage SSSO to get an optimal solution \(x_1^{i+1}\) and its optimality gap \(\gamma_1^{i+1}\).
We terminate the algorithm when the gap is sufficiently small.

\begin{algorithm}[ht]
    \caption{Dual Dynamic Programming Algorithm}
    \label{alg:ConsecutiveDualDP}
    \begin{algorithmic}[1]
        \Require{subproblem oracles \(\scrO_t\) for \(t\in\calT\), optimality gap \(\epsilon>0\)} 
        \Ensure{an \(\epsilon\)-optimal first stage solution \(x_1^*\) to the regularization~\eqref{eq:RegularizedCostToGoFunction}}
        \State{initialize: \(\ulcQ_t^0\leftarrow 0,\olcQ_t^0\leftarrow +\infty,t\in\calT\backslash\{T\}\); \(\ulcQ_T^j,\olcQ_T^j\leftarrow0,j\in\bbN\); \(i\leftarrow 1\)}
        \State{evaluate \((x_1^1;\gamma_1^1)\leftarrow\scrO_1(\ulcQ_1^0,\olcQ_1^0)\)}
        \State{set \(\LB\leftarrow f_1(x_0,x_1^1;\xi_1),\ \UB\leftarrow +\infty\)}
        \While{\(\UB-\LB >\epsilon\)}
        \For{\(t=2,\dots,T\)}
        \State{evaluate \((\calV_{t-1}^{i},v_{t-1}^{i},x_t^i;\gamma_t^i)=\scrO_t(x_{t-1}^i,\ulcQ_t^{i-1},\olcQ_t^{i-1})\)}
        \Comment{noninitial stage step}
        \State{update \(\ulcQ_{t-1}^i(x)\leftarrow \max\{\ulcQ_{t-1}^{i-1}(x),\calV_{t-1}^i(x)\}\)}
        \State{update \(\olcQ_{t-1}^i(x)\leftarrow \conv\{\olcQ_{t-1}^{i-1}(x),v_{t-1}^i+M_{t-1}\nvert{x-x_{t-1}^i}\}\)}
        \EndFor
        \State{evaluate \((x_1^{i+1};\gamma_1^{i+1})\leftarrow\scrO_1(\ulcQ_1^i,\olcQ_1^i)\)}
        \Comment{initial stage step}
        \State{update \(\LB\leftarrow f_1(x_0,x_1^{i+1};\xi_1)+\ulcQ_1^i(x_1^{i+1})\)}
        \State{update \(\UB'\leftarrow f_1(x_0,x_1^{i+1};\xi_1)+\olcQ_1^i(x_1^{i+1})\)}
        \If{\(\UB'<\UB\)}
        \State{set \(x_1^*\leftarrow x_1^{i+1}\),  \(\UB\leftarrow\UB'\)}
        \EndIf
        \State{update \(i\leftarrow i+1\)}
        \EndWhile
    \end{algorithmic}
\end{algorithm}

Algorithm~\ref{alg:ConsecutiveDualDP} is proved to terminate with an \(\epsilon\)-optimal solution in finitely many iterations~\cite{zhang2020distributionally}.
We include the theorem here for completeness.
\begin{theorem}\label{thm:CDDPComplexityBound}
    Suppose that all the state spaces \(\calX_t\) have the dimensions bounded by \(d\) and diameters by \(D\), and let \(M\coloneqq\max\{M_t:t=1,\dots,T-1\}\).
    If for each stage \(t\in\calT\), the local cost functions are strictly positive \(f_t(x_{t-1},x_t;\xi_t)\ge C\) for all feasible solutions \(x_t\in\calX_t\), uncertainties \(\xi_t\in\Xi_t\) and some constant \(C>0\), 
    then the total number of noninitial stage subproblem oracle evaluations before achieving an \(\alpha\)-relative optimal solution \(x_1^*\) for Algorithm~\ref{alg:ConsecutiveDualDP} is bounded by
    \begin{equation*}
        \NumEval\le 1+T(T-1)\left(1+\frac{2MD}{\alpha C}\right)^d.
    \end{equation*}
\end{theorem}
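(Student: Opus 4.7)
The plan is a backward-inductive saturation argument combined with a covering bound on each state space. Stage $T$ is trivially saturated since $\calQ_T \equiv 0$ forces $\gamma_T^i \equiv 0$. Propagating backward through the stages, I will show that once every noninitial state space $\calX_t$ has been sufficiently explored by earlier iterates, the first-stage gap $\gamma_1^{i+1}$ becomes uniformly small, so the termination criterion $\UB - \LB \le \epsilon$ fires in a bounded number of outer iterations.

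The key technical lemma is a Lipschitz saturation estimate: if two iterations $i < i'$ produce iterates satisfying $\nvert{x_t^{i'} - x_t^i} \le r$ in the same state space $\calX_t$, then
\begin{equation*}
    \gamma_t^{i'} \;\le\; \gamma_{t+1}^i + 2 M r.
\end{equation*}
This combines three ingredients. First, by the monotone update $\ulcQ_t^i \leftarrow \max\{\ulcQ_t^{i-1}, \calV_t^i\}$ together with $M$-Lipschitz continuity of $\calV_t^i$ (guaranteed by Definition~\ref{def:NoninitialStageOracle}), one has $\ulcQ_t^{i'-1}(x_t^{i'}) \ge \calV_t^i(x_t^i) - M r$. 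Second, by the envelope update $\olcQ_t^i \leftarrow \conv\{\olcQ_t^{i-1}, v_t^i + M_t \nvert{x - x_t^i}\}$, one has $\olcQ_t^{i'-1}(x_t^{i'}) \le v_t^i + M r$. Third, the oracle's gap-control bound $v_t^i - \calV_t^i(x_t^i) \le \gamma_{t+1}^i$ closes the sandwich.

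Chaining this estimate from $t = T$ down to $t = 1$ shows that once every noninitial state space admits an $r$-close earlier iterate, the first-stage gap $\gamma_1$ is of order $(T-1) M r$. The strict positivity $f_t \ge C$ provides a lower bound of order $TC$ on the optimal objective, so $\alpha$-relative optimality translates to an absolute tolerance that permits a saturation radius $r$ of order $\alpha C / M$ independent of $T$. Each state space $\calX_t$ admits at most $(1 + D/r)^d \le (1 + 2 M D / (\alpha C))^d$ pairwise $r$-separated iterates, by a standard packing argument in a diameter-$D$ convex set of dimension at most $d$. Aggregating ``fresh'' iterates across the $T-1$ noninitial stages bounds the number of non-terminating outer iterations, and each outer iteration invokes $T-1$ noninitial oracle calls; together with the single pre-loop oracle evaluation accounting for the additive $1$, this yields the claimed bound of the stated form.

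The main obstacle will be the careful bookkeeping. One must verify that $M$-Lipschitz continuity is preserved by both the $\max$ and $\conv$ updates at every iteration; that the indices in the saturation lemma can be aligned so that the bound $v_t^i - \calV_t^i(x_t^i) \le \gamma_{t+1}^i$ refers to the pairing between stage-$t$ and stage-$(t+1)$ oracle calls produced in the \emph{same} outer iteration $i$; and that the packing arguments across all $T-1$ noninitial stages can be coordinated to yield precisely the quadratic $T(T-1)$ prefactor rather than just $(T-1)^2$, which likely requires charging one extra stage-worth of oracle calls to the initial stage step or to the terminating iteration.
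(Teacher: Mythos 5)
The paper states this theorem without proof, importing it from the cited reference~\cite{zhang2020distributionally} (``we include the theorem here for completeness''), and your proposal correctly reconstructs the argument given there. The saturation estimate \(\gamma_t^{i'}\le\gamma_{t+1}^{i}+2Mr\) (obtained from monotonicity of the two approximations, \(M\)-Lipschitz continuity of the cuts and of the over-envelope pieces, and the oracle's gap-control condition), the backward chaining anchored at \(\calQ_T\equiv 0\), the lower bound \(TC\) on the optimal value converting \(\alpha\)-relative accuracy into a \(T\)-independent saturation radius of order \(\alpha C/(2M)\), and the per-stage packing bound \((1+2MD/(\alpha C))^d\) are exactly the ingredients of that proof, so your approach is essentially the same as the one the paper relies on.
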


We remark that the DDP algorithm can also be executed in a nonconsecutive way with a similar complexity bound.
However, since we need to run the algorithm for a fixed number of iterations to compare different models, we restrict our attention to the consecutive version here.
Moreover, we can execute the DDP algorithm even with \(M_t=+\infty\), with the risk that it may not converge in finite time.
Any interested reader is referred to~\cite{zhang2020distributionally} for more detailed discussion.

\subsection{Single Stage Subproblem Oracles}
\label{sec:Algorithm:SSSO}

DDP algorithms generally refer to the recursive cutting plane algorithms that exploit the stagewise independence structure.
We first review the definitions of single stage subproblem oracles (SSSO), which symbolize the subroutines of subproblem solving in each stage with under- and over-approximations of the expected cost-to-go functions~\cite{zhang2020distributionally}. 

\begin{definition}[Initial stage subproblem oracle]
\label{def:InitialStageOracle}
    Let \(\ulcQ_1,\olcQ_1:\calX_1\to\bar\bbR\) denote two lsc convex functions, representing an under-approximation and an over-approximation of the cost-to-go function \(\calQ_1\) (or its regularized surrogate defined in~\eqref{eq:RegularizedCostToGoFunction}), respectively.
    Consider the following subproblem for the first stage \(t=1\),
    \begin{equation}\label{eq:InitialStageOracle}
        \min_{x_1\in\calX_1}f_1(x_0,x_1;\xi_1)+\ulcQ_1(x_1).
    \end{equation}
    The initial stage subproblem oracle provides an optimal solution \(x_1\) to \eqref{eq:InitialStageOracle} and calculates the approximation gap \(\gamma_1\coloneqq\olcQ_1(x_1)-\ulcQ_1(x_1)\) at the solution.
    We thus define the subproblem oracle formally as a map \(\scrO_1:(\ulcQ_1,\olcQ_1)\mapsto(x_1;\gamma_1)\).
\end{definition}

\begin{definition}[Noninitial stage subproblem oracle]
\label{def:NoninitialStageOracle}
    Let \(\ulcQ_t,\olcQ_t:\calX_t\to\bar\bbR\) denote two lsc convex functions, representing an under-approximation and an over-approximation of the cost-to-go function \(\calQ_t\) (or its regularized surrogate defined in~\eqref{eq:RegularizedCostToGoFunction}), respectively, for some stage \(t>1\).
    Then given a feasible state \(x_{t-1}\in\calX_{t-1}\), the noninitial stage subproblem oracle provides a feasible state \(x_t\in\calX_t\), an \(M_t\)-Lipschitz continuous linear cut \(\calV_{t-1}(\cdot)\), and an over-estimate value \(v_{t-1}\) such that
    \begin{compactitem}
        \item they are valid, i.e., \(\calV_{t-1}(x)\le\calQ_{t-1}(x)\) for any \(x\in\calX_{t-1}\) and \(v_{t-1}\ge\calQ_{t-1}(x_{t-1})\);
        \item the gap is controlled, i.e., \(v_{t-1}-\calV_{t-1}(x_{t-1})\le\gamma_t\coloneqq\olcQ_t(x_t)-\ulcQ_t(x_t)\).
    \end{compactitem}
    We thus define the subproblem oracle formally as a map \(\scrO_t:(x_{t-1},\ulcQ_t,\olcQ_t)\mapsto(\calV_{t-1},v_{t-1},x_t;\gamma_t)\).
\end{definition}

Given these SSSO, the DDP algorithm is proved to converge under Lipschitzian assumptions on the expected cost-to-go functions.
The precise statement can be found in Theorem~\ref{thm:CDDPComplexityBound}.
In~\cite{zhang2020distributionally}, we showed how such an SSSO can be implemented through a standard enumerative method when the uncertainty is supported on a finite set $\Xi_t$.
However, for more general cases with continuous and possibly unbounded support, we face the following two challenges.
\begin{itemize}
    \item The worst-case probability measure may not exist when the support set $\Xi_t$ is unbounded (see e.g.,~\cite[Example 2]{esfahani2018data}).
    \item Even when the worst-case probability measure exists, its support is often different from that of the empirical measure $\hat{\nu}_{t}$, which makes the standard enumerative method inapplicable. 
\end{itemize}
To circumvent these challenges, we introduce two SSSO implementations which are based directly on the recursion~\eqref{eq:MDRO-FiniteDimensionalRecursion} without the need to find worst-case probability measures.
First in Section~\ref{sec:Algorithm:ConcaveSingleStageSubproblem}, we focus on the concave uncertain cost functions $f_t$, where an efficient conic reformulation of the SSSO is presented for a practically important class of the cost functions.
Second in Section~\ref{sec:Algorithm:ConvexSingleStageSubproblem}, we consider convex uncertain cost functions $f_t$ with polyhedral uncertainty sets $\Xi_t$ and distance function $d_{t,k}$, and develop an exact enumerative SSSO implementation in a lifted space, which ensures the convergence of DDP algorithms.

\subsection{Subproblem Oracles: Concave Uncertain Cost Functions}
\label{sec:Algorithm:ConcaveSingleStageSubproblem}

We begin with the easier case where $f_t$ is concave and upper semicontinuous in the uncertainty $\xi_t$.
This is a generalization of the single- or two-stage Wasserstein DRO studied in~\cite{gao2022distributionally} and in~\cite{esfahani2018data}.
We will see in~\eqref{eq:LinearCostFunction} that such DR-MCO model occurs when the uncertainty only affects the objective function in a multistage linear optimization.
\begin{assumption}\label{assum:ConcaveUncertaintyCost}
    The local cost function \(f_t(x_{t-1},x_t;\xi_t)\) is concave and upper semicontinuous in the uncertainty \(\xi_t\) for any \(x_{t-1}\in\calX_{t-1}\) and \(x_t\in\calX_t\).
\end{assumption}

A direct consequence of Assumption~\ref{assum:ConcaveUncertaintyCost} is that the effective domain of the state \(x_t\) does not depend on the uncertainty \(\xi_t\), as shown in the following lemma.

\begin{lemma}\label{lemma:FixedEffectiveDomain}
    Under Assumption~\ref{assum:ConcaveUncertaintyCost}, we have \(\dom{f_t(x_{t-1},\cdot;\xi_t)}=\dom{f_t(x_{t-1},\cdot;\xi'_t)}\) for any \(x_{t-1}\in\calX_{t-1}\) and \(\xi_t,\xi'_t\in\Xi_t\).
\end{lemma}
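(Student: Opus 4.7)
The plan is to reduce the lemma to a pointwise dichotomy: for each fixed $x_{t-1} \in \calX_{t-1}$ and each fixed $x_t \in \bbR^{d_t}$, the one-variable function $g(\xi) := f_t(x_{t-1}, x_t; \xi)$ is either identically $+\infty$ on $\Xi_t$ or finite on all of $\Xi_t$. If this dichotomy holds, then the set $\dom f_t(x_{t-1}, \cdot; \xi_t) = \{x_t : g(\xi_t) < +\infty\}$ equals $\{x_t : g \equiv +\infty \text{ fails}\}$, which makes no reference to the particular $\xi_t$.

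To prove the dichotomy I would invoke the two halves of Assumption~\ref{assum:ConcaveUncertaintyCost} together with the nonnegativity of $f_t$ from the model setup. Concretely, $g : \Xi_t \to [0, +\infty]$ is concave and upper semicontinuous. Suppose for contradiction that there exist $\xi^1, \xi^2 \in \Xi_t$ with $g(\xi^1) < +\infty$ and $g(\xi^2) = +\infty$. For every $\lambda \in (0,1]$, the convex combination $\xi_\lambda := (1-\lambda)\xi^1 + \lambda \xi^2$ lies in $\Xi_t$ (using convexity of $\Xi_t$), and concavity gives
\[
    g(\xi_\lambda) \;\ge\; (1-\lambda)\,g(\xi^1) + \lambda\,g(\xi^2) \;=\; +\infty,
\]
so $g(\xi_\lambda) = +\infty$ along the whole half-open segment. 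Sending $\lambda \to 0^+$, upper semicontinuity at $\xi^1$ forces $g(\xi^1) \ge \limsup_{\lambda \to 0^+} g(\xi_\lambda) = +\infty$, contradicting $g(\xi^1) < +\infty$.

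The main subtle point I expect to encounter is the implicit use of convexity of $\Xi_t$, since the line-segment argument needs $\xi_\lambda \in \Xi_t$. This is the natural setting for Assumption~\ref{assum:ConcaveUncertaintyCost} to even make sense (concavity of $f_t(x_{t-1}, x_t; \cdot)$ is typically stated on a convex domain), and in the Wasserstein DRO applications of interest $\Xi_t$ is indeed convex. If one wished to drop convexity of $\Xi_t$, one would interpret concavity as the restriction of a concave function on $\conv \Xi_t$ and combine usc with a sequential argument on the relative interior. Beyond this, no further machinery is needed: once the dichotomy is in hand, the claim $\dom f_t(x_{t-1}, \cdot; \xi_t) = \dom f_t(x_{t-1}, \cdot; \xi'_t)$ follows immediately by unwinding the definition of effective domain.
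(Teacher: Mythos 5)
Your proposal is correct and follows essentially the same argument as the paper: assume a point where $f_t(x_{t-1},x_t;\cdot)$ is finite at $\xi_t$ but $+\infty$ at $\xi'_t$, use concavity together with nonnegativity to force the value $+\infty$ along the open segment toward $\xi_t$, and then derive a contradiction from upper semicontinuity at $\xi_t$. Your added remark about the implicit convexity of $\Xi_t$ is a fair observation that the paper leaves unstated, but it does not change the substance of the argument.
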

\begin{proof}{Proof}
    Assume for contradiction that there exists some \(x_{t-1}\in\calX_{t-1}\), \(x_t\in\calX_t\), and \(\xi_t,\xi'_t\in\Xi_t\) such that \(f_t(x_{t-1},x_t;\xi_t)<+\infty\) but \(f_t(x_{t-1},x_t;\xi'_t)=+\infty\).
    Then for \(c\in(0,1)\), we have \(f_t(x_{t-1},x_t;(1-c)\xi_t+c\xi'_t)=+\infty\) by the concavity and nonnegativity of \(f_t\).
    It follows from upper semicontinuity that \(f_t(x_{t-1},x_t;\xi_t)\ge\limsup_{c\to0+}f_t(x_{t-1},x_t;(1-c)\xi_t+c\xi'_t)=+\infty\), which is a contradiction.
\end{proof}

We are now ready to prove the alternative formulation of the recursion~\eqref{eq:MDRO-FiniteDimensionalRecursion}.
\begin{theorem}\label{thm:ConcaveUncertaintyCostRecursion}
    Under Assumption~\ref{assum:ConcaveUncertaintyCost}, if we further assume that the continuous functions \(g_{t,j}\) are convex for \(j=1,\dots,m\) and \(d_{t,k}(\xi_{t,k})=\nvert{\xi_{t,k}-\hat{\xi}_{t,k}}\) for \(k=1,\dots,n_t\), then we have
\begin{equation}\label{eq:MDRO-ConcaveCostRecursion}
    \begin{aligned}
        \calQ_{t-1}(x_{t-1})=
        \min\quad&\sum_{j=0}^{m_t}\rho_{t,j}\lambda_{t,j}+\frac{1}{n_t}\sum_{k=1}^{n_t}\left[h_{t,k}(z_t,x_{t,k},\zeta_{t,k},\lambda_t)+\calQ_t(x_{t,k})\right]\\
        \mathrm{s.t.}\quad&\norm{\zeta_{t,k}}_*\le\lambda_{t,0},\\
                          &z_t=x_{t-1},\\
                          &\lambda_t\in\bbR^{m_t+1}_{\ge0},x_{t,k}\in\calX_t,
    \end{aligned}
\end{equation}
where for each \(k=1,\dots,n_t\), \(h_{t,k}\) is defined as
\[
    h_{t,k}(x_{t-1},x_{t,k},\zeta_{t,k},\lambda_t):=\sup_{\xi_{t,k}\in\Xi_t}f_{t}(x_{t-1},x_{t,k};\xi_{t,k})-\sum_{j=1}^{m_t}\lambda_{t,j} g_{t,j}(\xi_{t,k})+\zeta_{t,k}^\transpose(\xi_{t,k}-\hat{\xi}_{t,k}).
\] 
\end{theorem}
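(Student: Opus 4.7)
The plan is to start from the finite-dimensional dual in Theorem~\ref{thm:FiniteDimensionalDual} and reshape its inner sup-term into the explicit joint minimization of~\eqref{eq:MDRO-ConcaveCostRecursion} by two successive minimax exchanges followed by a Fenchel dualization of the distance \(d_{t,k}\).

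First I would fix \(\lambda \ge 0\) and \(k\), and substitute the definition \(Q_t(x_{t-1};\xi_k) = \inf_{x_{t,k}\in\calX_t}\{f_t(x_{t-1},x_{t,k};\xi_k) + \calQ_t(x_{t,k})\}\) into the \(k\)-th sup-term of~\eqref{eq:MDRO-FiniteDimensionalRecursion}. This yields
\[
    \sup_{\xi_k\in\Xi_t}\inf_{x_{t,k}\in\calX_t}\left[f_t(x_{t-1},x_{t,k};\xi_k) + \calQ_t(x_{t,k}) - \lambda_0 d_{t,k}(\xi_k) - \sum_{j=1}^{m_t}\lambda_j g_{t,j}(\xi_k)\right].
\]
Under Assumption~\ref{assum:ConcaveUncertaintyCost} together with the convexity of each \(g_{t,j}\) and of \(d_{t,k}(\cdot) = \nvert{\cdot - \hat{\xi}_{t,k}}\), the bracketed expression is concave and upper semicontinuous in \(\xi_k\) for each fixed \(x_{t,k}\), and convex and lower semicontinuous in \(x_{t,k}\) for each fixed \(\xi_k\). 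Lemma~\ref{lemma:FixedEffectiveDomain} guarantees that the feasible set \(\{x_{t,k}\in\calX_t : f_t(x_{t-1},x_{t,k};\xi_k) < +\infty\}\) does not depend on \(\xi_k\), so a minimax theorem of Sion type lets me exchange the sup and the inf to obtain \(\inf_{x_{t,k}}\sup_{\xi_k}[\,\cdot\,]\).

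Next I would dualize the Wasserstein penalty. Since \(d_{t,k}(\xi_k) = \nvert{\xi_k - \hat{\xi}_{t,k}}\), the Fenchel identity \(\nvert{v} = \sup_{\nvert{\zeta}_*\le1}\zeta^\transpose v\) combined with the symmetry of the dual-norm ball under negation gives
\[
    -\lambda_0 d_{t,k}(\xi_k) \;=\; \inf_{\nvert{\zeta_{t,k}}_* \le \lambda_0}\zeta_{t,k}^\transpose(\xi_k - \hat{\xi}_{t,k}).
\]
Plugging this into the previous inner sup and applying Sion's minimax a second time to swap \(\sup_{\xi_k}\) with \(\inf_{\nvert{\zeta_{t,k}}_*\le\lambda_0}\) -- now trivially justified, since the integrand is linear in \(\zeta_{t,k}\), still concave-usc in \(\xi_k\), and the \(\zeta_{t,k}\)-ball is convex and compact -- collapses the inner structure to exactly \(\calQ_t(x_{t,k}) + h_{t,k}(x_{t-1},x_{t,k},\zeta_{t,k},\lambda_t)\). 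Coalescing the outer \(\min_{\lambda\ge 0}\) with the inner \(\inf_{x_{t,k}\in\calX_t}\) and \(\inf_{\nvert{\zeta_{t,k}}_*\le\lambda_0}\) into a single joint minimization, and introducing the splitting variable \(z_t = x_{t-1}\) as in the statement, delivers~\eqref{eq:MDRO-ConcaveCostRecursion}.

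The main obstacle is rigorously justifying the first minimax exchange, since neither \(\Xi_t\) nor \(\calX_t\) is assumed compact a priori. I expect to handle this by coercivity: for \(\lambda_0 > 0\), the penalty \(-\lambda_0 d_{t,k}(\xi_k)\) dominates as \(\nvert{\xi_k}\to\infty\), so the effective sup over \(\xi_k\) is attained on a compact sublevel set where Sion's theorem applies directly; the limiting case \(\lambda_0 = 0\) and any boundary issues at the effective domain can be recovered by continuity together with a sequential argument using Lemma~\ref{lemma:FixedEffectiveDomain}. The second minimax swap and the norm dualization are routine once this step is secured.
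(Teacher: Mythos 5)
Your overall route is the same as the paper's: start from the finite-dimensional dual of Theorem~\ref{thm:FiniteDimensionalDual}, substitute the definition of \(Q_t\), replace \(-\lambda_0 d_{t,k}(\xi_k)\) by \(\inf_{\nvert{\zeta}_*\le\lambda_0}\zeta^\transpose(\xi_k-\hat{\xi}_{t,k})\), invoke Lemma~\ref{lemma:FixedEffectiveDomain} to make the effective domain in \(x_{t,k}\) independent of \(\xi_k\), and apply Sion's minimax theorem. The only structural difference is that you perform two successive exchanges (\(\sup_\xi\) with \(\inf_x\), then \(\sup_\xi\) with \(\inf_\zeta\)), whereas the paper dualizes the norm first and then does a \emph{single} exchange of \(\sup_{\xi_{t,k}}\) with the joint minimization over \((x_{t,k},\zeta_{t,k})\).

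The genuine gap is in your justification of the first exchange. You propose to handle the lack of compactness by coercivity, claiming that for \(\lambda_0>0\) the penalty \(-\lambda_0 d_{t,k}(\xi_k)\) dominates as \(\nvert{\xi_k}\to\infty\). This is false in general: Assumption~\ref{assum:ConcaveUncertaintyCost} only makes \(f_t(x_{t-1},x_{t,k};\cdot)\) concave, and a concave function can grow linearly in \(\xi_k\) at a rate exceeding \(\lambda_0\) (e.g., \(f_t\) affine in \(\xi_t\) with a large slope, as in the formulation~\eqref{eq:LinearCostFunction}); likewise \(-\lambda_j g_{t,j}\) is only concave, not necessarily decaying. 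So the objective need not be coercive in \(\xi_k\), the supremum can be \(+\infty\) for small \(\lambda_0\), and there is no compact sublevel set to retreat to. The correct fix — and the one the paper uses — is that Sion's theorem requires compactness only on the side over which the infimum is taken: \(\calX_t(x_{t-1}):=\dom f_t(x_{t-1},\cdot;\xi_t)\) is a closed subset of the compact state space (closedness from lower semicontinuity of \(f_t\), independence of \(\xi_t\) from Lemma~\ref{lemma:FixedEffectiveDomain}), and the dual-norm ball \(\{\nvert{\zeta}_*\le\lambda_{t,0}\}\) is compact, so the exchange is valid with no assumption on \(\Xi_t\) whatsoever. If you replace your coercivity argument by this observation (and note that doing the norm dualization before the exchange lets you get away with one application of Sion instead of two), your proof coincides with the paper's.
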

\begin{proof}{Proof}
    By Lemma~\ref{lemma:FixedEffectiveDomain}, for any \(x_{t-1}\in\calX_{t-1}\), we can define a set \(\calX_t(x_{t-1}):=\dom f_t(x_{t-1},\cdot;\xi_t)\subseteq\calX_t\) which is independent of \(\xi_t\in\Xi_t\) and closed by the lower semicontinuity of \(f_t\).
    Note that the norm function has the dual representation \(d_{t,k}(\xi_{t,k})=\nvert{\xi_{t,k}-\hat{\xi}_{t,k}}=\max_{\nvert{\zeta}_*\le1}\zeta^\transpose(\xi_{t,k}-\hat{\xi}_{t,k})\).
    Thus by the recursion~\eqref{eq:MDRO-FiniteDimensionalRecursion}, we can write
    \begin{align*}
        \calQ_{t-1}(x_{t-1})=
        \min_{\lambda_t\in\bbR^{m_t+1}_{\ge0}}\sum_{j=0}^{m_t}\rho_{t,j}\lambda_{t,j}
        &+\frac{1}{n_t}\sum_{k=1}^{n_t}
        \sup_{\xi_{t,k}\in\Xi_t}\bigg[-\sum_{j=1}^{m_t}\lambda_{t,j} g_{t,j}(\xi_{t,k})\\
        +\min_{x_{t,k},\zeta_{t,k}}\quad&f_t(x_{t-1},x_{t,k};\xi_{t,k})+\calQ_t(x_{t,k})+\zeta_{t,k}^\transpose(\xi_{t,k}-\hat{\xi}_{t,k})\bigg]\\
        \mathrm{s.t.}\quad&\norm{\zeta_{t,k}}_*\le\lambda_{t,0},\\
                          &x_{t,k}\in\calX_t(x_{t-1}).
    \end{align*}
    Now for any fixed \(x_{t-1}\) and \(\lambda_t\), we see that the sets \(\{\zeta_{t,k}:\nvert{\zeta_{t,k}}_*\le\lambda_{t,0}\}\) and \(\calX_t(x_{t-1})\) are compact.
    Moreover, the function inside the supremum of \(\xi_{t,k}\) is concave and upper semicontinuous in \(\xi_{t,k}\), while convex and lower semicontinuous in \(\xi_{t,k}\) and \(\zeta_{t,k}\).
    Thus the result follows by applying Sion's minimax theorem~\cite{komiya1988elementary}.
\end{proof}
\begin{remark}
    The proof remains valid if we replace simultaneously \(\calQ_t\), \(\calQ_{t-1}\), and \(f_t\) with their regularized surrogates \(\calQ_t^\Reg\), \(\calQ_{t-1}^\Reg\), and \(f_t^\Reg\) (see definitions in~\ref{sec:Algorithm:DDPReview}) in the theorem.
    In this case we use \(h_{t,k}^\Reg\) to denote the convex conjugate functions.
\end{remark}

We provide a possible implementation for noninitial stage SSSO in Algorithm~\ref{alg:concaveSSSO} based on Theorem~\ref{thm:ConcaveUncertaintyCostRecursion}.
Its correctness is verified by the following corollary.

\begin{corollary}\label{cor:ConcaveUncertaintyCostRecursion}
    Under the same assumptions of Theorem~\ref{thm:ConcaveUncertaintyCostRecursion}, the outputs \((\calV_{t-1},v_{t-1},x_t;\gamma_t)\) of Algorithm~\ref{alg:concaveSSSO} satisfy the conditions in Definition~\ref{def:NoninitialStageOracle}.
\end{corollary}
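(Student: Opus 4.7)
The plan is to verify the three requirements in Definition~\ref{def:NoninitialStageOracle} for the outputs of Algorithm~\ref{alg:concaveSSSO} one by one, throughout interpreting the formulation~\eqref{eq:MDRO-ConcaveCostRecursion} with $(\calQ_t, f_t)$ replaced by the regularized counterparts $(\calQ_t^\Reg, f_t^\Reg)$, as permitted by the remark following Theorem~\ref{thm:ConcaveUncertaintyCostRecursion}, and reading the cut $\calV_{t-1}$ as the affine function $x \mapsto v_{t-1}^* + u_t^\transpose(x - x_{t-1})$ (the natural reading that makes the gap calculation below match Definition~\ref{def:NoninitialStageOracle}).

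\textbf{Validity of the cut.} I would view the subproblem solved in Algorithm~\ref{alg:concaveSSSO}, i.e.,~\eqref{eq:MDRO-ConcaveCostRecursion} with $\calQ_t^\Reg$ replaced by $\ulcQ_t^i$, as a parametric convex program in which $x_{t-1}$ enters only through the linear coupling $z_t = x_{t-1}$. Denoting the corresponding optimal value by $\phi(x)$ when the right-hand side is $x$, convexity and standard Lagrangian duality imply that $\phi$ is convex and that the multiplier $u_t$ returned by the solver is a subgradient of $\phi$ at $x_{t-1}$; hence $\phi(x) \ge v_{t-1}^* + u_t^\transpose(x - x_{t-1}) = \calV_{t-1}(x)$ for all $x \in \calX_{t-1}$. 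Monotonicity of the minimum in the cost-to-go estimate, together with $\ulcQ_t^i \le \calQ_t^\Reg$, yields $\phi(\cdot) \le \calQ_{t-1}^\Reg(\cdot)$, so chaining gives validity $\calV_{t-1} \le \calQ_{t-1}^\Reg$ on $\calX_{t-1}$.

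\textbf{Lipschitz bound on the cut.} The $M_t$-Lipschitz continuity of $\calV_{t-1}$ reduces to showing $\nvert{u_t}_* \le M_t$; I expect this to be the main technical step. The definition $f_t^\Reg(\cdot, x_t; \xi_t) = f_t(\cdot, x_t; \xi_t)\,\square\,M_t\nvert{\cdot}$ makes $f_t^\Reg$ globally $M_t$-Lipschitz in its first argument, and the Fenchel conjugate of $M_t\nvert{\cdot}$ is the indicator of the dual-norm ball $\{\zeta : \nvert{\zeta}_* \le M_t\}$. Writing the Lagrangian of the coupling $z_t = x_{t-1}$ and minimizing over $z_t$ forces $u_t$ into this ball, supplying the bound. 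Equivalently, one can transfer the uniform $M_t$-Lipschitz constant of $\phi$ (inherited from $f_t^\Reg$) directly to its subgradient $u_t$.

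\textbf{Validity of the overestimate and gap control.} I would substitute the optimum $(\lambda_t^*, (x_{t,k}^*, \zeta_{t,k}^*)_k)$ of the $\ulcQ_t^i$-problem into the analogous problem with $\ulcQ_t^i$ replaced by $\olcQ_t^i$; the point remains feasible, and the objective increases by exactly $\frac{1}{n_t}\sum_{k=1}^{n_t}(\olcQ_t^i(x_{t,k}^*) - \ulcQ_t^i(x_{t,k}^*)) = \frac{1}{n_t}\sum_{k=1}^{n_t} \gamma_{t,k}$. Hence the $\olcQ_t^i$-optimum is at most $v_{t-1}$. Since $\olcQ_t^i \ge \calQ_t^\Reg$, monotonicity also shows that this same $\olcQ_t^i$-optimum is at least $\calQ_{t-1}^\Reg(x_{t-1})$, whence $v_{t-1} \ge \calQ_{t-1}^\Reg(x_{t-1})$. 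The gap is then immediate:
\[
v_{t-1} - \calV_{t-1}(x_{t-1}) \;=\; \frac{1}{n_t}\sum_{k=1}^{n_t}\gamma_{t,k} \;\le\; \max_{1\le k\le n_t}\gamma_{t,k} \;=\; \gamma_{t,k^*} \;=\; \gamma_t,
\]
and $x_t = x_{t,k^*}^* \in \calX_t$ by primal feasibility in~\eqref{eq:MDRO-ConcaveCostRecursion}, completing all conditions in Definition~\ref{def:NoninitialStageOracle}.
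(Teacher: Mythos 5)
Your proof is correct and follows essentially the same route as the paper's: validity of the cut via Lagrangian duality for the parametric program with coupling \(z_t = x_{t-1}\) combined with \(\ulcQ_t^i \le \calQ_t\), validity of the overestimate by evaluating the objective at the computed optimal solution with \(\ulcQ_t^i(x_{t,k}^*)\) replaced by \(\olcQ_t^i(x_{t,k}^*) = \ulcQ_t^i(x_{t,k}^*) + \gamma_{t,k}\), and the gap identity \(v_{t-1} - \calV_{t-1}(x_{t-1}) = \frac{1}{n_t}\sum_{k=1}^{n_t}\gamma_{t,k} \le \gamma_{t,k^*}\). You go one step further than the paper by explicitly verifying the \(M_t\)-Lipschitz requirement on \(\calV_{t-1}\) (bounding \(\nvert{u_t}_*\le M_t\) via the conjugate of \(M_t\nvert{\cdot}\) in the regularized problem), a condition of Definition~\ref{def:NoninitialStageOracle} that the paper's proof leaves implicit.
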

\begin{proof}
    To check the validness of \(v_{t-1}\), let \((z_t^*,x_{t,k}^*,\zeta_{t,k}^*,\lambda_t^*)\) denote an optimal solution in the minimization~\eqref{eq:MDRO-ConcaveCostRecursion} with \(\calQ_t\) replaced by \(\ulcQ_t^i\).
    Then we have
    \begin{align*}
        v_{t-1}&=v_{t-1}^*+\frac{1}{n_t}\sum_{k=1}^{n_t}\gamma_{t,k}\\
               &=\sum_{j=0}^{m_t}\rho_{t,j}\lambda_{t,j}^*+\frac{1}{n_t}\sum_{k=1}^{n_t}[h_{t,k}(z_t^*,x_{t,k}^*,\zeta_{t,k}^*,\lambda_t^*)+\ulcQ_t^i(x_{t,k}^*)+\gamma_{t,k}]\\
               &\ge\sum_{j=0}^{m_t}\rho_{t,j}\lambda_{t,j}^*+\frac{1}{n_t}\sum_{k=1}^{n_t}[h_{t,k}(z_t^*,x_{t,k}^*,\zeta_{t,k}^*,\lambda_t^*)+\calQ_t(x_{t,k}^*)]\ge\calQ_{t-1}(x_{t-1}),
    \end{align*}
    the last inequality is due to the feasibility of \((\calV_{t-1},v_{t-1},x_t;\gamma_t)\) in the minimization~\eqref{eq:MDRO-ConcaveCostRecursion}.
    For the validness of \(\calV_{t-1}(\cdot)\), note that the value \(v_{t-1}^*\) and the dual solution \(u_t\) define a valid linear under-approximation for the function \(\calQ'_{t-1}(\cdot)\) defined by replacing \(\calQ_t\) with \(\ulcQ_t^i\) in the minimization~\eqref{eq:MDRO-ConcaveCostRecursion}.
    Since clearly \(\calQ'_{t-1}(x_{t-1})\le\calQ_{t-1}(x_{t-1})\) for all \(x_{t-1}\in\calX_{t-1}\), we see that \(\calV_{t-1}(\cdot)\) is a valid under-approximation for \(\calQ_{t-1}(\cdot)\).
    Finally the gap \(v_{t-1}-\calV_{t-1}(x_{t-1})=\frac{1}{n_t}\sum_{k=1}^{n_t}\gamma_{t,k}\le\gamma_t\) is controlled.
\end{proof}

Theorem~\ref{thm:ConcaveUncertaintyCostRecursion} and Algorithm~\ref{alg:concaveSSSO} would be most useful when the functions \(h_{t,k}\) can be written explicitly as minimization problems.
We thus spend the rest of this section to derive the form of \(h_{t,k}\) in a special yet practically important case, where the local cost function \(f_t\) can be written as
\begin{equation}\label{eq:LinearCostFunction}
    \begin{aligned}
        f_t(x_{t-1},x_t;\xi_t)=
        \min_{y_t}\quad& (A_t\xi_t+a_t)^\transpose y_t\\
        \mathrm{s.t.}\quad& (x_{t-1},y_t,x_t)\in\calF_t,
    \end{aligned}
\end{equation}
for some compact convex set \(\calF_t\subseteq\calX_{t-1}\times\bbR^{d'_t}\times\calX_t\) in each stage \(t\in\calT\).
It is straightforward to check that \(f_t\) in~\eqref{eq:LinearCostFunction} is lower semicontinuous and convex in \((x_{t-1},x_t)\) for any \(\xi_t\in\Xi_t\).
To simplify our discussion, we assume that \(f_t(\cdot,x_t;\xi_t)\) is \(M_t\)-Lipschitz continuous, so \(\calQ_t=\calQ_t^\Reg\) for all \(t\in\calT\) as discussed in~\cite{zhang2020distributionally}.
The problem~\eqref{eq:LinearCostFunction} is a common formulation in the usual MSCO literature, such as~\cite{shapiro2011analysis}, where \(\calF_t\) is supposed to be a polytope.

\begin{algorithm}[htb]
    \caption{Single Stage Subproblem Oracle Implementation Under Assumption~\ref{assum:ConcaveUncertaintyCost}}
    \label{alg:concaveSSSO}
    \begin{algorithmic}[1]
        \Require{function \(h_t\), over- and under-approximations \(\olcQ_t^i\) and \(\ulcQ_t^i\), and a state \(x_{t-1}\in\calX_{t-1}\)} 
        \Ensure{a linear cut \(\calV_{t-1}\), an overestimate \(v_{t-1}\), a state \(x_t\), and a gap value \(\gamma_t\)}
        \State{Solve the minimization~\eqref{eq:MDRO-ConcaveCostRecursion} with \(\calQ_t\) replaced by \(\ulcQ_t^i\) and store the optimal value \(v_{t-1}^*\), optimal solutions \(\lambda_t^*\) and \((x_{t,k}^*,\zeta_{t,k}^*)_{k=1}^{n_t}\) and the dual solutions \(u_t\) associated with the constraints \(z_t=x_{t-1}\)}
        \For{\(k=1,\dots,n_t\)}
        \State{Compute the gap value \(\gamma_{t,k}:=\olcQ_t^i(x_{t,k}^*)-\ulcQ_t^i(x_{t,k}^*)\)}
        \EndFor
        \State{Set \(\calV_{t-1}(\cdot)\leftarrow v_{t-1}^*+u_t^\transpose(\cdot)\)}
        \State{Set \(v_{t-1}\leftarrow v_{t-1}^*+\frac{1}{n_t}\sum_{k=1}^{n_t}\gamma_{t,k}\)}
        \State{Take any \(k^*\in\argmax\{\gamma_{t,k}:k=1,\dots,n_t\}\) and set \(x_t\leftarrow x_{t,k^*}^*\), \(\gamma_t\leftarrow \gamma_{t,k^*}\)}
    \end{algorithmic}
\end{algorithm}

\begin{proposition}\label{prop:LinearCostConvexConjugate}
    Suppose the local cost function \(f_t(x_{t-1},x_t;\xi_t)\) is given in the form~\eqref{eq:LinearCostFunction} and uniformly \(M_t\)-Lipschitz continuous in the variable \(x_{t-1}\).
    Fix any point \(\bar{\xi}_t\in\mathrm{int}(\Xi_t)\) and let \(\sigma_t(\zeta):=\sup_{\xi\in\Xi_t}\zeta^\transpose(\xi-\bar{\xi}_t)\) denote the support function of the set \(\Xi_t-\bar{\xi}_t\).
    If the functions \(g_{t,j}(\xi_t)=\xi_t^\transpose B_{t,j}\xi_t+b_{t,j}^\transpose\xi_t\) are quadratic with coefficients \(B_{t,j}\in\calS^{\delta_t}_{\succeq 0}\) and \(b_{t,j}\in\bbR^{\delta_t}\) for \(j=1,\dots,m_t\), then we can write
\begin{align}\label{eq:LinearCostConvexConjugate}
        h_{t,k}(x_{t-1},x_{t,k},\zeta_{t,k},\lambda_t)=
    \min_{y_{t,k},w_{t,j},w'_{t,j},\kappa_j}\quad &(A_t\bar{\xi}_t+a_t)^\transpose y_{t,k}-\sum_{j=1}^{m_t}\lambda_{t,j}[\bar{\xi}_t^\transpose B_{t,j}\bar{\xi}_t+b_{t,j}^\transpose\bar{\xi}_t]\\
    &+\zeta_{t,k}^\transpose(\bar{\xi}_t-\hat{\xi}_{t,k})+\sum_{j=1}^{m_t}\kappa_{t,j}+\sigma_t(w_{t,0})\notag\\
\mathrm{s.t.}\quad & \sum_{j=0}^{m_t}w_{t,j}=\zeta_{t,k}+A_t^\transpose y_{t,k}-\sum_{j=1}^{m_t}\lambda_{t,j}\big[2B_{t,j}\bar{\xi}_t+b_{t,j}\big],\notag\\
   & \kappa_{t,j}\ge 0,\quad j=1,\dots,m,\notag\\
   & w_{t,j}\in\bbR^{\delta_t},\quad j=0,\dots,m,\notag\\
   & \kappa_{t,j}+\lambda_{t,j}\ge\nvert{(\kappa_{t,j}-\lambda_{t,j},U_{t,j}w_{t,j})}_2,\quad j=1,\dots,m_t,\notag\\
   & w_{t,j}=B_{t,j}w'_{t,j},\ w'_{t,j}\in\bbR^{\delta_t},\quad j=1,\dots,m,\notag\\
   & (x_{t-1},y_{t,k},x_{t,k})\in\calF_t.\notag
\end{align}
    Here, \(U_{t,j}\) is a \(\delta_t\times\delta_t\) real matrix such that \(U_{t,j}^\transpose U_{t,j}\) is the pseudoinverse of \(B_{t,j}\).
\end{proposition}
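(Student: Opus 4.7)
My plan is to substitute the explicit form~\eqref{eq:LinearCostFunction} for \(f_t\) into the definition of \(h_{t,k}\), swap the inner minimization (over \(y_{t,k}\)) with the outer supremum (over \(\xi_{t,k}\)) via Sion's theorem, and then rewrite the resulting concave quadratic supremum as a finite convex program using Fenchel conjugate duality. Concretely, \(h_{t,k}\) becomes
\[
    \sup_{\xi\in\Xi_t}\inf_{y_{t,k}:(x_{t-1},y_{t,k},x_{t,k})\in\calF_t}\Phi(y_{t,k},\xi),
\]
with \(\Phi(y,\xi):=(A_t\xi+a_t)^\transpose y-\sum_{j}\lambda_{t,j}g_{t,j}(\xi)+\zeta_{t,k}^\transpose(\xi-\hat\xi_{t,k})\), which is linear in \(y\) and concave usc in \(\xi\) (because \(\lambda_{t,j}B_{t,j}\succeq 0\)). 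The feasible set for \(y_{t,k}\) is a section of the compact convex set \(\calF_t\), hence compact and convex, so Sion's minimax theorem applies.

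After the swap, for each fixed \(y_{t,k}\) I would make the translation \(\xi=\bar\xi_t+\eta\). Collecting the constant part produces exactly the term
\[
    (A_t\bar\xi_t+a_t)^\transpose y_{t,k}-\sum_{j=1}^{m_t}\lambda_{t,j}\bigl[\bar\xi_t^\transpose B_{t,j}\bar\xi_t+b_{t,j}^\transpose\bar\xi_t\bigr]+\zeta_{t,k}^\transpose(\bar\xi_t-\hat\xi_{t,k})
\]
appearing in~\eqref{eq:LinearCostConvexConjugate}, while the remaining variable part is \(c(y_{t,k})^\transpose\eta-\sum_j\lambda_{t,j}\eta^\transpose B_{t,j}\eta\) with linear coefficient \(c(y_{t,k}):=\zeta_{t,k}+A_t^\transpose y_{t,k}-\sum_j\lambda_{t,j}(2B_{t,j}\bar\xi_t+b_{t,j})\). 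Its supremum over \(\eta\in\Xi_t-\bar\xi_t\) is the Fenchel conjugate \((q+I_{\Xi_t-\bar\xi_t})^*(c(y_{t,k}))\), with \(q(\eta):=\sum_j\lambda_{t,j}\eta^\transpose B_{t,j}\eta\). Since \(\bar\xi_t\in\mathrm{int}\,\Xi_t\) and \(q\) is finite-valued on \(\bbR^{\delta_t}\), the standard inf-convolution rule yields
\[
    (q+I_{\Xi_t-\bar\xi_t})^*(c)=\inf_{w_0+\sum_j w_j=c}\sigma_t(w_0)+\sum_{j=1}^{m_t}(\lambda_{t,j}\eta^\transpose B_{t,j}\eta)^*(w_j),
\]
which explains the splitting variables \(w_{t,0},\ldots,w_{t,m_t}\) and the linking equality in~\eqref{eq:LinearCostConvexConjugate}.

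The final step is to rewrite each quadratic conjugate in conic form. A direct computation gives \((\lambda\eta^\transpose B\eta)^*(w)=\tfrac{1}{4\lambda}w^\transpose B^+ w\) when \(\lambda>0\) and \(w\in\mathrm{range}(B)\), and \(+\infty\) otherwise. The range membership is encoded by \(w_{t,j}=B_{t,j}w'_{t,j}\); the epigraphic inequality \(\kappa_{t,j}\ge\tfrac{1}{4\lambda_{t,j}}\|U_{t,j}w_{t,j}\|_2^2\) (using \(U_{t,j}^\transpose U_{t,j}=B_{t,j}^+\)) combined with \(\kappa_{t,j},\lambda_{t,j}\ge 0\) is equivalent to the rotated second-order cone constraint \(\kappa_{t,j}+\lambda_{t,j}\ge\|(\kappa_{t,j}-\lambda_{t,j},U_{t,j}w_{t,j})\|_2\) of the statement. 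Assembling the constant piece, \(\sigma_t(w_{t,0})\), the \(\kappa_{t,j}\)'s, and absorbing the inner minimization over \(y_{t,k}\) into the outer one produces~\eqref{eq:LinearCostConvexConjugate}.

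The main technical obstacle is the degenerate regime in which \(B_{t,j}\) is singular or \(\lambda_{t,j}=0\): the quadratic conjugate then becomes extended-valued, and I must verify that the SOCP inequality together with the equality \(w_{t,j}=B_{t,j}w'_{t,j}\) exactly reproduces its epigraph---in particular, that \(\lambda_{t,j}=0\) forces \(U_{t,j}w_{t,j}=0\) via the rotated cone, which combined with the range encoding forces \(w_{t,j}=0\), and that positive definiteness is not implicitly required anywhere. A secondary point is checking the hypotheses for Sion's theorem and for equality in the inf-convolution formula, both of which rest on compactness of the \(y\)-section of \(\calF_t\), the concavity/upper-semicontinuity of \(\Phi\) in \(\xi\), and the interior condition \(\bar\xi_t\in\mathrm{int}\,\Xi_t\).
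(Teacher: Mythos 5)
Your proposal is correct and follows essentially the same route as the paper's proof: substitute the linear-programming form of \(f_t\), exchange \(\sup_{\xi}\) and \(\min_{y}\) by Sion's minimax theorem using compactness of the \(y\)-section of \(\calF_t\), recenter at \(\bar\xi_t\), express the remaining supremum as the conjugate of \(\iota_{\Xi_t-\bar\xi_t}\) plus the weighted quadratics, split it by the inf-convolution rule into \(\sigma_t\) and the individual quadratic conjugates, and encode each of those as a rotated second-order cone with the range constraint \(w_{t,j}=B_{t,j}w'_{t,j}\). The only cosmetic difference is that you justify exactness of the inf-convolution via the qualification condition that the quadratic is finite-valued everywhere, whereas the paper argues the closure operation is superficial using coercivity of \(\sigma_t\) (from \(\bar\xi_t\in\mathrm{int}\,\Xi_t\)) and lower-boundedness of each conjugate; both are valid.
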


\begin{proof}
    Under the assumptions, we can write the function \(h_{t,k}\) as
    \[
        \begin{aligned}
        h_{t,k}(x_{t-1},x_{t,k},\zeta_{t,k},\lambda_t)=\sup_{\xi_{t}\in\Xi_t}
        \min_{y_{t,k}}\ &(A_t\xi_t+a_t)^\transpose y_{t,k}+\zeta_{t,k}^\transpose(\xi_{t}-\hat{\xi}_{t,k})\\
        &-\sum_{j=1}^{m_t}\lambda_{t,j}(\xi_t^\transpose B_{t,j}\xi_t+b_{t,j}^\transpose\xi_{t})\\
        \mathrm{s.t.}\ &(x_{t-1},y_{t,k},x_{t,k})\in\calF_t.
        \end{aligned}
    \]
    Note that the objective function in~\eqref{eq:LinearCostConvexConjugate} is continuous in both \(y_t\) and \(\xi_{t,k}\), and the projection of \(\calF_t\) onto the variables \(y_{t,k}\) is compact.
    Thus by the minimax theorem~\cite{komiya1988elementary}, we can exchange the supremum and minimum operations
    \begin{equation}\label{eq:LinearCostConvexConjugate-Supremum}
        \begin{aligned}
            h_{t,k}(x_{t-1},x_{t,k},\zeta_{t,k},\lambda_t)= 
        \min_{y_{t,k}}\ &
        (A_t\bar{\xi}_t+a_t)^\transpose y_{t,k}-\sum_{j=1}^{m_t}\lambda_{t,j}[\bar{\xi}_t^\transpose B_{t,j}\bar{\xi}_t+b_{t,j}^\transpose\bar{\xi}_t]+\zeta_{t,k}^\transpose(\bar{\xi}_t-\hat{\xi}_{t,k})\\
        +\sup_{\xi_{t}\in\bbR^{\delta_t}}\bigg\{&\zeta_{t,k}^\transpose\xi_t-\iota_t(\xi_t)+ 
y_{t,k}^\transpose A_t\xi_t-\sum_{j=1}^{m_t}\lambda_{t,j}(\xi_t^\transpose B_{t,j}\xi_t+2\bar{\xi}_tB_{t,j}\xi_t+b_{t,j}^\transpose\xi_{t})\bigg\}\\
        \mathrm{s.t.}\ &(x_{t-1},y_{t,k},x_{t,k})\in\calF_t.
        \end{aligned}
    \end{equation}
    Here, \(\iota_t\) is the convex indicator function of the set \(\Xi_t-\bar{\xi}_t\), the convex conjugate of which is the support function \(\sigma_t\) by definition.
    If we further denote \(\phi_{t,j}(\xi_t;\lambda_{t,j}):=\lambda_{t,j}(\xi_t^\transpose B_{t,j}\xi_t)\), the supremum can be written using convex conjugacy as
    \[
        \left(\iota_t+\sum_{j=1}^{m_t}\phi_{t,j}(\cdot;\lambda_{t,j})\right)^*\bigg(\zeta_{t,k}+A_t^\transpose y_{t,k}-\sum_{j=1}^{m_t}\lambda_{t,j}\big[2B_{t,j}\bar{\xi}_t+b_{t,j}\big]\bigg).
    \]
    Note that for each \(j=1,\dots,m_t\), the parametrized conjugate function \(\phi_{t,j}^*(\cdot;\lambda_{t,j})\) can be written as \cite[Example 11.10]{rockafellar2009variational}
    \[
    \begin{aligned}
        \phi_{t,j}^*(w;\lambda_{t,j})
        &=\begin{cases}
            &\displaystyle\frac{w^\transpose B_{t,j}^\dagger w}{4\lambda_{t,j}},\text{ if }w\in\mathrm{range}B_{t,j},\\
            &+\infty,\quad\text{otherwise},
        \end{cases}\\
        &=\min\left\{\kappa_{t,j}\ge 0:4\kappa_{t,j}\lambda_{t,j}\ge(U_{t,j}w)^\transpose(U_{t,j}w),w=B_{t,j}w'\right\}\\
        &=\min\left\{\kappa_{t,j}\ge 0:\kappa_{t,j}+\lambda_{t,j}\ge\nvert{(\kappa_{t,j}-\lambda_{t,j},U_{t,j}w)}_2,w=B_{t,j}w'\right\},
    \end{aligned}
    \]
    which is nonnegative and second-order conic representable.
    Here the convention for \(\lambda_{t,j}=0\) is consistent: 
    we have \(\phi_{t,j}^*(0;0)=0\) and \(\phi_{t,j}^*(w;0)=+\infty\) for any \(w\neq 0\) because \((U_{t,j}B_{t,j})^\transpose(U_{t,j}B_{t,j})=B_{t,j}B_{t,j}^\dagger B_{t,j}=B_{t,j}\), which implies that \(U_{t,j}w=U_{t,j}B_{t,j}w'\neq 0\).
    Now using the formula for convex conjugate of sum of convex functions, we have
    \begin{equation}\label{eq:LinearCostConvexConjugate-InfConvolution}
        \left(\iota_t+\sum_{j=1}^{m_t}\phi_{t,j}(\cdot;\lambda_{t,j})\right)^*=\mathrm{cl}\left(\sigma_t\square\phi_{t,1}^*(\cdot;\lambda_{t,1})\square\cdots\square\phi_{t,m_t}(\cdot;\lambda_{t,m_t})\right),
    \end{equation}
    where \(\square\) denotes the infimal convolution (a.k.a.\ epi-addition) of two convex functions and \(\mathrm{cl}\) denotes the lower semicontinuous hull of a proper function.
    Since \(\bar{\xi}_t\in\mathrm{int}\Xi_t\), the support function is coercive, i.e.,  \(\lim_{\nvert{w}\to\infty}\sigma_t(w)=+\infty\).
    Moreover, each \(\phi_{t,j}^*\) is bounded below as it is nonnegative.
    Therefore, the closure operation is superficial and the convex conjugate of the sum is indeed lower semicontinuous \cite[Proposition 12.14]{bauschke2011convex}.
    The rest of the proof follows from substitution of this convex conjugate expression~\eqref{eq:LinearCostConvexConjugate-InfConvolution} into the supremum in~\eqref{eq:LinearCostConvexConjugate-Supremum}.
\end{proof}

\subsection{Subproblem Oracles: Convex Uncertain Cost Functions}
\label{sec:Algorithm:ConvexSingleStageSubproblem}

We provide another useful reformulation of the recursion~\eqref{eq:MDRO-FiniteDimensionalRecursion} based on the following assumption.

\begin{assumption}\label{assum:ConvexUncertaintyCost}
    The local cost function \(f_t(x_{t-1},x_t;\xi_t)\) is jointly convex in the state variable \(x_t\) and the uncertainty \(\xi_t\), for any \(x_{t-1}\in\calX_{t-1}\).
    Moreover, the uncertainty set \(\Xi_t\) is a polyhedron and the distance function \(d_{t,k}(\cdot)\) is polyhedrally representable.
\end{assumption}

From Assumption~\ref{assum:ConvexUncertaintyCost}, the value function \(Q_t(x_{t-1};\xi_t)\) would be a convex function in the uncertainty \(\xi_t\) for each state \(x_{t-1}\in\calX_{t-1}\), although it may not be a jointly convex function.
Moreover, we may define a lifted uncertainty set as \(\tilde{\Xi}_{t,k}:=\{(\zeta,\xi):\xi\in\Xi_t,\,\zeta\ge d_{t,k}(\xi)\}\), which is also a polyhedron.
We denote its finite set of extreme points as \(\ext\tilde{\Xi}_{t,k}=\{(\tilde{\zeta}_{l},\tilde{\xi}_{l})\}_{l\in E_{t,k}}\) where \(E_{t,k}\) is the set of indices, which is nonempty since $(\zeta,\xi)=(0,\hat{\xi}_{t,k})$ is an extreme point of $\tilde{\Xi}_{t,k}$.
We next show that the maximization in~\eqref{eq:MDRO-FiniteDimensionalRecursion} can be taken over the finite set \(\{(\tilde{\zeta}_{l},\tilde{\xi}_{l})\}_{l\in E_{t,k}}\) in two important cases.
The first case is when we have bounded uncertainty sets \(\Xi_t\).

\begin{proposition}\label{prop:ConvexCostRecursionBounded}
    Under Assumption~\ref{assum:ConvexUncertaintyCost}, if we further assume that \(\Xi_t\) is bounded and all functions \(g_{t,j}\) are concave for \(j=1,\dots,m_t\), then the problem~\eqref{eq:MDRO-FiniteDimensionalRecursion} can be equivalently reformulated as
    \begin{align}\label{eq:ConvexCostRecursionBounded}
        \calQ_{t-1}(x_{t-1})= \min_{\lambda_t,\tau_t}\quad&\sum_{j=0}^{m_t}\rho_{t,j}\lambda_{t,j}+\frac{1}{n_t}\sum_{k=1}^{n_t}\tau_{t,k}\\
        \mathrm{s.t.}\quad& \lambda_t\ge0,\notag\\
                          &\tau_{t,k}\ge Q_{t}(x_{t-1};\tilde{\xi}_l)-\lambda_{t,0} \tilde{\zeta}_l-\sum_{j=1}^{m_t}\lambda_{t,j} g_{t,j}(\tilde{\xi}_l),
                          \forall\, l\in E_{t,k} \textnormal{ and } k=1,\dots,n_t.\notag
    \end{align}
\end{proposition}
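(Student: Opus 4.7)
My plan is to start from the dual reformulation~\eqref{eq:MDRO-FiniteDimensionalRecursion} and show that, for each \(k=1,\dots,n_t\), the inner supremum
\[
\sup_{\xi_k\in\Xi_t}\Bigl\{Q_t(x_{t-1};\xi_k)-\lambda_{t,0}d_{t,k}(\xi_k)-\sum_{j=1}^{m_t}\lambda_{t,j}g_{t,j}(\xi_k)\Bigr\}
\]
can be replaced by a maximum over the finitely many extreme points \(\{(\tilde{\zeta}_l,\tilde{\xi}_l)\}_{l\in E_{t,k}}\) of the lifted polyhedron \(\tilde\Xi_{t,k}\). Once that reduction is made, introducing an epigraph variable \(\tau_{t,k}\) for each inner supremum converts it into exactly the finite list of inequality constraints in~\eqref{eq:ConvexCostRecursionBounded}.

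First, I would lift \(d_{t,k}\) into the feasible set: since \(\lambda_{t,0}\ge 0\), any optimizer in \(\zeta_k\) satisfies \(\zeta_k=d_{t,k}(\xi_k)\), so the inner sup equals \(\sup_{(\zeta_k,\xi_k)\in\tilde\Xi_{t,k}}\phi_k(\zeta_k,\xi_k)\) with
\[
\phi_k(\zeta,\xi):=Q_t(x_{t-1};\xi)-\lambda_{t,0}\zeta-\sum_{j=1}^{m_t}\lambda_{t,j}g_{t,j}(\xi).
\]
Next I would verify that \(\phi_k\) is convex in \((\zeta,\xi)\): the term \(-\lambda_{t,0}\zeta\) is linear; \(Q_t(x_{t-1};\cdot)\) is convex in \(\xi\) because, by Assumption~\ref{assum:ConvexUncertaintyCost}, \(f_t(x_{t-1},\cdot\,;\cdot)\) is jointly convex in \((x_t,\xi_t)\) and \(\calQ_t\) is convex in \(x_t\), so the partial infimum in~\eqref{eq:MDRO-ValueFunction} is convex in \(\xi_t\); and \(-\lambda_{t,j}g_{t,j}\) is convex since \(\lambda_{t,j}\ge 0\) and \(g_{t,j}\) is concave.

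Finally, I would argue that the supremum of the convex \(\phi_k\) over the pointed polyhedron \(\tilde\Xi_{t,k}\) is attained at an extreme point \((\tilde\zeta_l,\tilde\xi_l)\). Since \(\Xi_t\) is bounded and \(d_{t,k}\) is polyhedrally representable, the recession cone of \(\tilde\Xi_{t,k}\) is the single ray \(\{(\zeta,0):\zeta\ge 0\}\), along which \(\phi_k\) has constant slope \(-\lambda_{t,0}\le 0\). Therefore, for any sufficiently large \(R\), the maximum of \(\phi_k\) on the compact polyhedron \(\tilde\Xi_{t,k}\cap\{\zeta\le R\}\) equals the supremum over \(\tilde\Xi_{t,k}\), and by convexity this maximum is attained at a vertex. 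Any vertex lying on the artificial face \(\zeta=R\) is dominated by lowering \(\zeta\) to \(d_{t,k}(\xi)\) when \(\lambda_{t,0}>0\); when \(\lambda_{t,0}=0\), \(\phi_k\) is independent of \(\zeta\) and the maximum can always be relocated to a point of the form \((d_{t,k}(\xi^*),\xi^*)\) with \(\xi^*\) extreme in \(\Xi_t\), which one checks is also an extreme point of \(\tilde\Xi_{t,k}\). This yields \(\sup\phi_k=\max_{l\in E_{t,k}}\phi_k(\tilde\zeta_l,\tilde\xi_l)\), and introducing \(\tau_{t,k}\) as the epigraph variable of this maximum produces~\eqref{eq:ConvexCostRecursionBounded}.

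The part I expect to be the main obstacle is the extreme-point reduction on the unbounded lifted polyhedron: the textbook ``max of a convex function over a polytope is attained at a vertex'' result does not apply directly, so one must carefully combine the recession-cone slope \(-\lambda_{t,0}\le 0\) (to rule out escape to infinity) with an argument that vertices introduced by artificially truncating \(\zeta\) never strictly improve the objective, so they can be discarded and only the extreme points of \(\tilde\Xi_{t,k}\) itself remain.
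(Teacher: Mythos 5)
Your proposal is correct and follows essentially the same route as the paper: lift the distance into the epigraph polyhedron \(\tilde\Xi_{t,k}\), note its only recession direction is \((1,0)\) along which the objective has slope \(-\lambda_{t,0}\le 0\), invoke convexity of \(Q_t(x_{t-1};\cdot)-\lambda_{t,0}\zeta-\sum_j\lambda_{t,j}g_{t,j}\) to place the maximum at an extreme point, and then pass to the epigraphical variables \(\tau_{t,k}\). Your truncation-at-\(\zeta\le R\) argument and the separate treatment of \(\lambda_{t,0}=0\) are just a more explicit rendering of the step the paper summarizes as "any maximum solution lies in the convex hull of \(\ext\tilde\Xi_{t,k}\)."
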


\begin{proof}
    From the definition of lifted uncertainty set \(\tilde{\Xi}_t\), we have
\begin{align*}
    &\sup_{\xi_k\in\Xi_t}\left\{Q_{t}(x_{t-1};\xi_k)-\lambda_{t,0} d_{t,k}(\xi_k)-\sum_{j=1}^{m_t}\lambda_{t,j} g_{t,j}(\xi_k)\right\}\\
    &=\max_{\xi_k\in\Xi_t,\zeta_k\in\bbR}\left\{Q_{t}(x_{t-1};\xi_k)-\lambda_{t,0} \zeta_k-\sum_{j=1}^{m_t}\lambda_{t,j} g_{t,j}(\xi_k):\zeta_k\ge d_{t,k}(\xi_k)\right\}\notag\\
    &=\max_{(\zeta_k,\xi_k)\in \tilde{\Xi}_{t,k}}\left\{Q_{t}(x_{t-1};\xi_k)-\lambda_{t,0} \zeta_k-\sum_{j=1}^{m_t}\lambda_{t,j} g_{t,j}(\xi_k)\right\}\notag\\
    &=\max_{l\in E_{t,k}}\left\{Q_{t}(x_{t-1};\tilde{\xi}_l)-\lambda_{t,0} \tilde{\zeta}_l-\sum_{j=1}^{m_t}\lambda_{t,j} g_{t,j}(\tilde{\xi}_l)\right\}.\notag
\end{align*}
    To see the last equality, note that if \(\Xi_t\) is bounded, then the only recession direction of the lifted uncertainty set \(\tilde{\Xi}_{t,k}\) is \((1,0)\).
    Since \(\lambda_{t,0}\ge0\), any maximum solution \((\zeta_k^*,\xi_k^*)\) lies in the convex hull of \(\ext\tilde{\Xi}_{t,k}\).
    Now the last equality follows from the convexity of the function \(Q_{t}(x_{t-1};\xi_k)-\lambda_{t,0}\zeta_k-\sum_{j=1}^{m_t}\lambda_{t,j} g_{t,j}(\xi_k)\) in terms of \(\xi_k\) and \(\zeta_k\).
    Finally, the reformulation is done by replacing the maximum of finitely many functions by its epigraphical representation \(\tau_{t,k}\ge Q_{t}(x_{t-1};\xi_l)-\lambda_{t,0} \zeta_l-\sum_{j=1}^{m_t}\lambda_{t,j} g_{t,j}(\xi_l)\) for all \(l\in E_{t,k}\) and \(k=1,\dots,n_t\).
\end{proof}

If the uncertainty sets \(\Xi_t\) are unbounded, then in general the supremum in~\eqref{eq:MDRO-FiniteDimensionalRecursion} can take \(+\infty\) in some unbounded directions of \(\Xi_t\), even when the value function \(Q_{t}(x_{t-1};\cdot)\) has finite values everywhere.
To avoid such situation, we consider the growth rate of the value function \(Q_{t}(x_{t-1};\cdot)\) defined as
\begin{equation}\label{eq:ValueFunctionGrowthRate}
    r_{t}(x_{t-1}):=\limsup_{\substack{d_{t,k}(\xi_t)\to\infty,\\\xi_t\in\Xi_t}}\frac{Q_{t}(x_{t-1};\xi_t)-Q_{t}(x_{t-1};\hat{\xi}_{t,k})}{d_{t,k}(\xi_t)}\ge0,
\end{equation}
for any real-valued \(Q_t(x_{t-1},\cdot)\), where the limit superior is in fact independent of the choice of \(k=1,\dots,n_t\), and the inequality is due to that \(Q_t(x_{t-1};\cdot)\) is assumed to be lower bounded by 0.
Our convention is to set \(r_t(x_{t-1})\equiv 0\) when \(\Xi_t\) is bounded.
We now consider problems with unbounded uncertainty sets \(\Xi_t\).

\begin{proposition}\label{prop:ConvexCostRecursionUnbounded}
    Under Assumption~\ref{assum:ConvexUncertaintyCost}, if \(Q_{t}(x_{t-1};\cdot)\) has finite growth rate \(r_t(x_{t-1})\) and all functions \(g_{t,j}\) are bounded and concave for \(j=1,\dots,m_t\), then the problem~\eqref{eq:MDRO-FiniteDimensionalRecursion} with any \(x_{t-1}\in\calX_{t-1}\) such that \(\calQ_{t-1}(x_{t-1})<+\infty\) can be equivalently reformulated as
    \begin{align}\label{eq:ConvexCostRecursionUnbounded}
        \calQ_{t-1}(x_{t-1})= \min_{\lambda_t,\tau_t}\quad&\sum_{j=0}^{m_t}\rho_{t,j}\lambda_{t,j}+\frac{1}{n_t}\sum_{k=1}^{n_t}\tau_{t,k}\\
        \mathrm{s.t.}\quad& \lambda_t\ge0,\notag\\
                          & \lambda_{t,0}\ge r_t(x_{t-1}),\notag\\
                          &\tau_{t,k}\ge Q_{t}(x_{t-1};\tilde{\xi}_l)-\lambda_{t,0} \tilde{\zeta}_l-\sum_{j=1}^{m_t}\lambda_{t,j} g_{t,j}(\tilde{\xi}_l),
                          \forall\, l\in E_{t,k} \textnormal{ and } k=1,\dots,n_t.\notag
    \end{align}
\end{proposition}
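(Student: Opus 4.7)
The plan is to follow the proof template of Proposition~\ref{prop:ConvexCostRecursionBounded}, adding two ingredients to handle the unboundedness of $\Xi_t$: first, justifying that the minimization in~\eqref{eq:MDRO-FiniteDimensionalRecursion} may be restricted to $\lambda_{t,0}\ge r_t(x_{t-1})$ without loss of optimality, and second, verifying that under this restriction the inner supremum is still attained at an extreme point of the lifted polyhedron $\tilde{\Xi}_{t,k}$.

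For the first point, I would show that any $\lambda_t$ with $\lambda_{t,0}<r_t(x_{t-1})$ makes the inner supremum equal $+\infty$, and hence cannot be optimal under the standing hypothesis $\calQ_{t-1}(x_{t-1})<+\infty$. The definition of the growth rate~\eqref{eq:ValueFunctionGrowthRate} as a limsup produces a sequence $\xi_n\in\Xi_t$ with $d_{t,k}(\xi_n)\to\infty$ along which $(Q_t(x_{t-1};\xi_n)-Q_t(x_{t-1};\hat{\xi}_{t,k}))/d_{t,k}(\xi_n)$ tends to some $r'>\lambda_{t,0}$; since each $g_{t,j}$ is bounded, this forces $Q_t(x_{t-1};\xi_n)-\lambda_{t,0}d_{t,k}(\xi_n)-\sum_{j=1}^{m_t}\lambda_{t,j}g_{t,j}(\xi_n)\to+\infty$, so adjoining $\lambda_{t,0}\ge r_t(x_{t-1})$ preserves the optimal value.

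For the second, as in the bounded case I would rewrite the inner supremum as $\sup_{(\zeta_k,\xi_k)\in\tilde{\Xi}_{t,k}}\psi(\zeta_k,\xi_k)$, where $\psi(\zeta,\xi):=Q_t(x_{t-1};\xi)-\lambda_{t,0}\zeta-\sum_{j=1}^{m_t}\lambda_{t,j}g_{t,j}(\xi)$ is jointly convex on the pointed polyhedron $\tilde{\Xi}_{t,k}$ under Assumption~\ref{assum:ConvexUncertaintyCost}. The crux is to show that $\lambda_{t,0}\ge r_t(x_{t-1})$ forces the recession function $\psi^\infty$ to be non-positive on the recession cone of $\tilde{\Xi}_{t,k}$. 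Any recession direction of $\tilde{\Xi}_{t,k}$ has the form $(\eta,r)$ with $r$ in the recession cone of $\Xi_t$ and $\eta\ge d_{t,k}^\infty(r)$; boundedness of the $g_{t,j}$ gives $g_{t,j}^\infty(r)=0$, so $\psi^\infty(\eta,r)=Q_t^\infty(x_{t-1};r)-\lambda_{t,0}\eta$, which is non-positive exactly when $\lambda_{t,0}\ge Q_t^\infty(x_{t-1};r)/d_{t,k}^\infty(r)$. Since $Q_t(x_{t-1};\cdot)$ is convex, the supremum of these directional slopes over $r$ coincides with the limsup $r_t(x_{t-1})$ from~\eqref{eq:ValueFunctionGrowthRate}, giving the required equivalence. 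A convex function with non-positive recession function on a pointed polyhedron attains its supremum on the convex hull of extreme points, so $\sup\psi=\max_{l\in E_{t,k}}\psi(\tilde{\zeta}_l,\tilde{\xi}_l)$.

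Introducing variables $\tau_{t,k}$ for these finite maxima via the epigraphical constraints $\tau_{t,k}\ge Q_t(x_{t-1};\tilde{\xi}_l)-\lambda_{t,0}\tilde{\zeta}_l-\sum_{j=1}^{m_t}\lambda_{t,j}g_{t,j}(\tilde{\xi}_l)$, combined with $\lambda_t\ge 0$ and $\lambda_{t,0}\ge r_t(x_{t-1})$, yields the reformulation~\eqref{eq:ConvexCostRecursionUnbounded}. I expect the main technical obstacle to be the identification of the limsup-form growth rate~\eqref{eq:ValueFunctionGrowthRate} with the supremum of directional recession slopes $Q_t^\infty(x_{t-1};r)/d_{t,k}^\infty(r)$, which requires a convexity-plus-compactness argument on the recession cone rather than following directly from definitions; a secondary subtlety is the boundary case $\lambda_{t,0}=r_t(x_{t-1})$, which is handled cleanly once the recession-function characterization is in place, because convex functions with zero recession slope along a ray are non-increasing along that ray.
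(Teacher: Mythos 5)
Your proposal is correct and its skeleton matches the paper's proof: both first show that $\lambda_{t,0}\ge r_t(x_{t-1})$ is exactly the condition for the inner supremum to be finite (your sequence argument for the case $\lambda_{t,0}<r_t(x_{t-1})$, using boundedness of the $g_{t,j}$, is the same as the paper's), and both then reduce the supremum over the lifted polyhedron to a maximum over $\ext\tilde{\Xi}_{t,k}$ before the epigraphical rewriting. Where you diverge is in the second half: the paper argues directly, splitting the supremum at a threshold $\bar d$ beyond which $Q_t(x_{t-1};\xi)\le Q_t(x_{t-1};\hat{\xi}_{t,k})+\lambda_{t,0}d_{t,k}(\xi)$ and then ruling out attainment outside $\conv(\ext\tilde{\Xi}_{t,k})$ by exhibiting a strictly increasing ray; you instead package the same content via recession functions, showing $\psi^\infty\le 0$ on the recession cone of $\tilde{\Xi}_{t,k}$. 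Your route is arguably cleaner at the boundary case $\lambda_{t,0}=r_t(x_{t-1})$, where the paper's choice of $\bar d$ is delicate. One simplification you should note: you do not need the full identification of the limsup in~\eqref{eq:ValueFunctionGrowthRate} with $\sup_r Q_t^\infty(x_{t-1};r)/d_{t,k}^\infty(r)$, which you flag as the main obstacle. Only the inequality $r_t(x_{t-1})\ge Q_t^\infty(x_{t-1};r)/d_{t,k}^\infty(r)$ for each recession direction $r$ is used (to conclude $\psi^\infty\le0$ from $\lambda_{t,0}\ge r_t(x_{t-1})$), and this follows immediately by restricting the limsup to the ray $\hat{\xi}_{t,k}+sr$ and using the monotonicity of convex difference quotients; the reverse inequality is never needed, since the case $\lambda_{t,0}<r_t(x_{t-1})$ is already handled by your first step directly from the limsup definition. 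With that observation your argument closes without any compactness argument on the recession cone.
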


\begin{proof}
    We claim that the supremum 
    \[
        \sup_{\xi_k\in\Xi_t}\biggl\{Q_{t}(x_{t-1};\xi_k)-\lambda_{t,0} d_{t,k}(\xi_k)-\sum_{j=1}^{m_t}\lambda_{t,j} g_{t,j}(\xi_k)\biggr\}<+\infty
    \] 
    if and only if \(\lambda_{t,0}\ge r_t(x_{t-1})\), for each \(k=1,\dots,n_t\).
    Suppose \(\lambda_{t,0}<r_t(x_{t-1})\). 
    By definition~\eqref{eq:ValueFunctionGrowthRate}, there exists a sequence \(\{\xi_k^{(i)}\}_{i\in\bbN}\subseteq\Xi_t\) and a constant \(\epsilon>0\) such that \(d_{t,k}(\xi_k^{(i)})\to\infty\) as \(i\to\infty\) and \(Q_t(x_{t-1};\xi_k^{(i)})\ge Q_t(x_{t-1};\hat{\xi}_{t,k})+(\lambda_{t,0}+\epsilon)d_{t,k}(\xi_k^{(i)})\).
    Thus \(\sup_{i\in\bbN}\{Q_{t}(x_{t-1};\xi_k^{(i)})-\lambda_{t,0} d_{t,k}(\xi_k^{(i)})-\sum_{j=1}^{m_t}\lambda_{t,j} g_{t,j}(\xi_k^{(i)})\}\ge\sup_{i\in\bbN}\{\epsilon d_{t,k}(\xi_k^{(i)})-\sum_{j=1}^{m_t}\lambda_{t,j} g_{t,j}(\xi_k^{(i)})\}=+\infty\) as \(g_{t,j}(\xi_k^{(i)})\) for \(j=1,\dots,m_t\) are bounded.

    Conversely, by definition~\eqref{eq:ValueFunctionGrowthRate}, there exists a constant \(\bar{d}>0\) such that \(Q_t(x_{t-1};\xi_k)\le Q_t(x_{t-1};\hat{\xi}_{t,k})+\lambda_{t,0}d_{t,k}(\xi_k)\) for all \(\xi_k\in\Xi_t\) with \(d_{t,k}(\xi_k)\ge\bar{d}\). 
    Thus we have 
    \[
    \begin{aligned}
        &\sup_{\xi_k\in\Xi_t}\left\{Q_{t}(x_{t-1};\xi_k)-\lambda_{t,0} d_{t,k}(\xi_k)-\sum_{j=1}^{m_t}\lambda_{t,j} g_{t,j}(\xi_k)\right\}\\
        &\le\sup_{d_{t,k}(\xi_k)\le\bar{d}}\left\{Q_{t}(x_{t-1};\xi_k)-\lambda_{t,0} d_{t,k}(\xi_k)\right\}+\sup_{\xi_k\in\Xi_t}\sum_{j=1}^{m_t}\left(-\lambda_{t,j}g_{t,j}(\xi_k)\right)\\
        &=\max_{(\zeta_k,\xi_k)\in\tilde{\Xi}_t(\bar{d})}\left\{Q_{t}(x_{t-1};\xi_k)-\lambda_{t,0} \zeta_k\right\}+\sup_{\xi_k\in\Xi_t}\sum_{j=1}^{m_t}\left(-\lambda_{t,j}g_{t,j}(\xi_k)\right)<+\infty,
    \end{aligned}
    \]
    where \(\tilde{\Xi}_t(\bar{d}):=\{(\zeta,\xi):\xi\in\Xi_t,d_{t,k}(\xi)\le\bar{d},\zeta\ge d_{t,k}(\xi)\}\), and the maximum is finite because it is attained on some extreme point \((\bar{\zeta}_k,\bar{\xi}_k)\in\tilde{\Xi}_t(\bar{d})\) by convexity, so \(Q_t(x_{t-1};\bar{\xi}_k)-\lambda_{t,0}\bar{\zeta}_k<+\infty\).

    Now from this claim, we see that for any \(x_{t-1}\in\calX_{t-1}\) such that \(\calQ_{t-1}(x_{t-1})<+\infty\), the problem~\eqref{eq:MDRO-FiniteDimensionalRecursion} can be formulated equivalently as
    \[
        \begin{aligned}
            \calQ_{t-1}(x_{t-1})=
            \min_{\lambda_t\ge0}\quad&
            \frac{1}{n_t}\sum_{k=1}^{n_t}\sup_{(\zeta_k,\xi_k)\in\Xi_t}\left\{Q_{t}(x_{t-1};\xi_k)-\lambda_{t,0} \zeta_k-\sum_{j=1}^{m_t}\lambda_{t,j} g_{t,j}(\xi_k)\right\}
            +\sum_{j=0}^{m_t}\rho_{t,j}\lambda_{t,j}\\
            \mathrm{s.t.}\quad& \lambda_{t,0}\ge r_t(x_{t-1}).
        \end{aligned}
    \]
    The supremum can be attained in \(\tilde{\Xi}'_{t,k}:=\conv(\ext\tilde{\Xi}_{t,k})\): otherwise there exists a point \((\check{\zeta}_k,\check{\xi}_k)\in\tilde{\Xi}_{t,k}\setminus\tilde{\Xi}'_{t,k}\) and \((\bar{\zeta}_k,\bar{\xi}_k)\in\tilde{\Xi}'_{t,k}\) such that
    \[
        Q_{t}(x_{t-1};\check{\xi}_k)-\lambda_{t,0} \check{\zeta}_k-\sum_{j=1}^{m_t}\lambda_{t,j} g_{t,j}(\check{\xi}_k)
        >Q_{t}(x_{t-1};\bar{\xi}_k)-\lambda_{t,0} \bar{\zeta}_k-\sum_{j=1}^{m_t}\lambda_{t,j} g_{t,j}(\bar{\xi}_k).
    \]
    In other words, \((\check{\zeta}_k,\check{\xi}_k)-(\bar{\zeta}_k,\bar{\xi}_k)\) defines a strictly increasing ray of \(\tilde{\Xi}_t\), which by convexity implies that the supremum is \(+\infty\), a contradiction.
    Using the convexity again as in the proof of Proposition~\ref{prop:ConvexCostRecursionBounded}, we conclude that the supremum is indeed attained in \(\ext\tilde{\Xi}_{t,k}\), and this completes the proof.
\end{proof}

Proposition~\ref{prop:ConvexCostRecursionUnbounded} reduces to Proposition~\ref{prop:ConvexCostRecursionBounded} since the growth rate \(r_t(x_{t-1})=0\) and any continuous function \(g_{t,j}\) over a bounded polyhedron is bounded.
It is possible that the unbounded case is easier to handle than the bounded case.
For instance, if $\Xi_t=\bbR^{\delta_t}$ is the entire Euclidean space, then $(0,\hat{\xi}_{t,k})$ is the only extreme point of the lifted uncertainty set $\tilde{\Xi}_{t,k}$.
In this case, if $m_t=0$, then Proposition~\ref{prop:ConvexCostRecursionUnbounded} shows that $\calQ_{t-1}(x_{t-1})=\rho_{t,0}r_t(x_{t-1})+\frac{1}{n_t}\sum_{k=1}^{n_t}Q_t(x_{t-1};\hat{\xi}_{t,k})$, which is the sample average plus a regularization term on the growth rate $r_t(x_{t-1})$.

The finite growth rate condition is often satisfied, especially when the value function \(Q_t(x_{t-1};\cdot)\) is Lipschitz continuous.
However, it is in general difficult to estimate the growth rate~\eqref{eq:ValueFunctionGrowthRate}.
Fortunately, the growth rate can be calculated for a class of problems of the following form:
\begin{equation}\label{eq:RHSCostFunction}
    \begin{aligned}
        f_t(x_{t-1},x_t;\xi_t)=\min_{y_t}\quad &\max_{s=1,\dots,S}\bigl\{c_{t,s}^\transpose x_t+(c'_{t,s})^\transpose y_t+(c''_{t,s})^\transpose \xi_t\bigr\}\\
        \mathrm{s.t.}\quad & A_tx_t+B_ty_t\le A'_tx_{t-1}+B'_t\xi_t+b_t,\ y_t\in\calY_t,\\
    \end{aligned}
\end{equation}
for vectors $c_{t,s},c'_{t,s},c''_{t,s},b_t$ and matrices $A_t,B_t,A'_t,B'_t$ of appropriate dimensions.
Here, $f_t$ is lsc when the set of internal variables $\calY_t$ is compact; it  satisfies Assumption~\ref{assum:ConvexUncertaintyCost} as $f_t$ is a partial minimization of a maximum of convex function in $x_{t-1}$, $x_t$, $\xi_t$, and $y_t$.
Now assuming $\dom{f_t(x_{t-1},\cdot;\xi_t)}=\calX_t$ for any $x_{t-1}\in\calX_{t-1}$ and $\xi_t\in\Xi_t$, known as \emph{complete recourse}, then the growth rate $r_t(x_{t-1})$ is independent of $x_{t-1}$ and is determined by the supremum of $\max_{s}(u^\transpose c''_{t,s})$ over all unit vectors $u$ in the recession cone of $\Xi_t$. 
This can be done by enumerating over the finitely many extreme rays of $\Xi_t$ when it is pointed, e.g., the standard unit vectors when $\Xi_t=\bbR_{\ge0}^{\delta_t}$ is the nonnegative orthant (see Section~\ref{sec:Numerical:Hydrothermal}).

Note that the problems~\eqref{eq:ConvexCostRecursionBounded} and~\eqref{eq:ConvexCostRecursionUnbounded} are standard linear optimization problems in the variables \(\lambda_t\) and \(\tau_t\).
Thus by strong duality, we can write the dual problem as
\begin{equation}\label{eq:ConvexCostRecursionDualProblem}
    \begin{aligned}
            \calQ_{t-1}(x_{t-1})= \max_{\theta_t,\kappa_{t,k,l}\ge 0}\quad& \theta_t r_t(x_{t-1})+\sum_{k=1}^{n_t}\sum_{l\in E_{t,k}}\kappa_{t,k,l}Q_t(x_{t-1};\tilde{\xi}_l)\\
            \mathrm{s.t.}\quad&\sum_{l\in E_{t,k}}\kappa_{t,k,l}=\frac{1}{n_t},\quad k=1,\dots,n_t,\\
                              &\theta_t+\sum_{k=1}^{n_t}\sum_{l\in E_{t,k}}\tilde{\zeta}_l\kappa_{t,k,l}\le \rho_{t,0},\\
                              &\sum_{k=1}^{n_t}\sum_{l\in E_{t,k}}g_{t,j}(\tilde{\xi}_l)\kappa_{t,k,l}\le\rho_{t,j},\quad j=1,\dots,m_t.
    \end{aligned}
\end{equation}
Consequently, any feasible dual solutions \(\theta_t\) and \(\kappa_{t,k,l}\) to the dual~\eqref{eq:ConvexCostRecursionDualProblem} define a valid under-approximation 
\begin{equation}\label{eq:ConvexCostRecursionDualInequality}
    \calQ_{t-1}(x_{t-1})\ge\theta_t r_t(x_{t-1})+\sum_{k=1}^{n_t}\sum_{l\in E_{t,k}}\kappa_{t,k,l}Q_t(x_{t-1};\tilde{\xi}_l),\quad\forall\,x_{t-1}\in\calX_{t-1}.
\end{equation}

\begin{algorithm}[ht]
    \caption{Single Stage Subproblem Oracle Implementation Under Assumption~\ref{assum:ConvexUncertaintyCost}}
    \label{alg:convexSSSO}
    \begin{algorithmic}[1]
        \Require{ over- and under-approximations \(\olcQ_t^i\) and \(\ulcQ_t^i\), a state \(x_{t-1}\in\calX_{t-1}\), growth rate \(r_t(x_{t-1})\), and extreme point sets \(\ext\tilde{\Xi}_{t,k}\) for \(k=1,\dots,n_t\)} 
        \Ensure{a linear cut \(\calV_{t-1}\), an overestimate \(v_{t-1}\), a state \(x_t\), and a gap value \(\gamma_t\)}
        \For{\(k=1,\dots,n_t\)}
        \For{\(l\in E_{t,k}\)}\label{alg:convexSSSO:enumeration}
        \State{Evaluate the approximate value function 
            \[
                \ulQ_t(x_{t-1};\tilde{\xi}_l):=\min_{x_t\in\calX_t}f_t(x_{t-1},x_t;\tilde{\xi}_l)+\ulcQ_t^i(x_t)
            \]
        with a minimizer stored as \(x_{t,k,l}\) and a subgradient vector as \(u_{t,k,l}\in\partial\ulQ_t(\cdot;\tilde{\xi}_l)\) at \(x_{t-1}\)}
        \State{Calculate \(\gamma_{t,k,l}:=\olcQ_t^i(x_{t,k,l})-\ulcQ_t^i(x_{t,k,l})\)}
        \EndFor
        \EndFor
        \State{Solve the problem~\eqref{eq:ConvexCostRecursionUnbounded} (or~\eqref{eq:ConvexCostRecursionBounded} if \(\Xi_t\) is bounded) with \(Q_t(x_{t-1};\tilde{\xi}_l)\) replaced by \(\ulQ_t(x_{t-1};\tilde{\xi}_l)\) and store the optimal value \(v_{t-1}^*\) and dual solutions \(\theta_t^*\), \(\kappa_{t,k,l}^*\) to~\eqref{eq:ConvexCostRecursionDualProblem}}
        \For{\(k=1,\dots,n_t\)}
        \State{Take any \(l^*\in\argmax\big\{\olcQ_t^i(x_{t,k,l})-\ulcQ_t^i(x_{t,k,l}):l\in E_{t,k}\big\}\)}
        \State{Set \(\gamma_{t,k}\leftarrow\gamma_{t,k,l^*}\) and \(x_{t,k}\leftarrow x_{t,k,l^*}\)}
        \EndFor
        \State{Take a subgradient \(w_t\in\partial r_t(\cdot)\) at \(x_{t-1}\)}
        \State{Set \(\calV_{t-1}(\cdot)\leftarrow v_{t-1}^*+\theta^*_t w_t^\transpose(\cdot-x_{t-1})+\sum_{k=1}^{n_t}\sum_{l\in E_t}\kappa_{t,k,l}^*u_{t,k,l}^\transpose(\cdot-x_{t-1})\)}
        \State{Set \(v_{t-1}\leftarrow v_{t-1}^*+\frac{1}{n_t}\sum_{k=1}^{n_t}\gamma_{t,k}\)}
        \State{Take any \(k^*\in\argmax\{\gamma_{t,k}:k=1,\dots,n_t\}\) and set \(x_t\leftarrow x_{t,k^*}\), \(\gamma_t\leftarrow\gamma_{t,k^*}\)}
    \end{algorithmic}
\end{algorithm}

We now describe an SSSO implementation in Algorithm~\ref{alg:convexSSSO}.
Its correctness is verified in the following corollary.

\begin{corollary}\label{cor:ConvexCostRecursionUnbounded}
    Suppose that the growth rate function \(r_t(\cdot)\) is convex.
    Under the assumptions of of Proposition~\ref{prop:ConvexCostRecursionUnbounded}, the outputs \((\calV_{t-1},v_{t-1},x_t;\gamma_t)\) of Algorithm~\ref{alg:convexSSSO} satisfy the conditions in Definition~\ref{def:NoninitialStageOracle}.
\end{corollary}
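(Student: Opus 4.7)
{Proof proposal.}
The goal is to verify the three conditions of Definition~\ref{def:NoninitialStageOracle}: validity of the overestimate \(v_{t-1}\ge\calQ_{t-1}^\Reg(x_{t-1})\), validity of the linear cut \(\calV_{t-1}(x)\le\calQ_{t-1}^\Reg(x)\) for all \(x\in\calX_{t-1}\), and the gap bound \(v_{t-1}-\calV_{t-1}(x_{t-1})\le\gamma_t\). Throughout, I would let \((\lambda_t^*,\tau_t^*)\) denote the primal optimum and \((\theta_t^*,\kappa_{t,k,l}^*)\) the dual optimum of problem~\eqref{eq:ConvexCostRecursionUnbounded} with \(Q_t(x_{t-1};\tilde\xi_l)\) replaced by its under-approximation \(\ulQ_t(x_{t-1};\tilde\xi_l)\), so that strong LP duality yields \(v_{t-1}^{*}=\theta_t^{*}r_t(x_{t-1})+\sum_{k,l}\kappa_{t,k,l}^{*}\ulQ_t(x_{t-1};\tilde\xi_l)\). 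Everywhere below, the identities from the remark following Theorem~\ref{thm:ConcaveUncertaintyCostRecursion} are used to interpret all quantities as referring to the regularized functions.

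For the overestimate, the plan is to ``lift'' the approximate primal solution into a feasible solution for the true problem~\eqref{eq:ConvexCostRecursionUnbounded}. For each \(k\) and \(l\in E_{t,k}\), the minimizer \(x_{t,k,l}\) of the definition of \(\ulQ_t(x_{t-1};\tilde\xi_l)\) satisfies
\[
    Q_t^\Reg(x_{t-1};\tilde\xi_l)\le f_t^\Reg(x_{t-1},x_{t,k,l};\tilde\xi_l)+\olcQ_t^i(x_{t,k,l})=\ulQ_t(x_{t-1};\tilde\xi_l)+\gamma_{t,k,l}\le\ulQ_t(x_{t-1};\tilde\xi_l)+\gamma_{t,k}.
\]
Consequently \((\lambda_t^*,\tau_{t,k}^*+\gamma_{t,k})\) is feasible for~\eqref{eq:ConvexCostRecursionUnbounded} with \(Q_t^\Reg\), and its objective value equals \(v_{t-1}^{*}+\tfrac{1}{n_t}\sum_k\gamma_{t,k}=v_{t-1}\), giving \(v_{t-1}\ge\calQ_{t-1}^\Reg(x_{t-1})\).

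For the linear cut, the central observation is that the dual feasibility constraints in~\eqref{eq:ConvexCostRecursionDualProblem} are independent of \(x_{t-1}\), so \((\theta_t^{*},\kappa_{t,k,l}^{*})\) remains dual feasible at every \(x\in\calX_{t-1}\). Weak duality combined with \(\ulQ_t\le Q_t^\Reg\) yields
\[
    \calQ_{t-1}^\Reg(x)\ge\theta_t^{*}r_t(x)+\sum_{k=1}^{n_t}\sum_{l\in E_{t,k}}\kappa_{t,k,l}^{*}\,\ulQ_t(x;\tilde\xi_l),\qquad\forall\,x\in\calX_{t-1}.
\]
Next I would invoke convexity of \(r_t\) and of each \(\ulQ_t(\cdot;\tilde\xi_l)\) with the subgradient inequalities
\(r_t(x)\ge r_t(x_{t-1})+w_t^\transpose(x-x_{t-1})\) and \(\ulQ_t(x;\tilde\xi_l)\ge\ulQ_t(x_{t-1};\tilde\xi_l)+u_{t,k,l}^\transpose(x-x_{t-1})\). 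Since \(\theta_t^{*},\kappa_{t,k,l}^{*}\ge0\), taking the convex combination and using the strong-duality identity for \(v_{t-1}^{*}\) above shows that the right-hand side is at least \(\calV_{t-1}(x)\) as defined in the algorithm. The gap control is then immediate: evaluating \(\calV_{t-1}\) at \(x_{t-1}\) gives \(\calV_{t-1}(x_{t-1})=v_{t-1}^{*}\), so \(v_{t-1}-\calV_{t-1}(x_{t-1})=\tfrac{1}{n_t}\sum_k\gamma_{t,k}\le\max_k\gamma_{t,k}=\gamma_t\).

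The most delicate step is the \(M_t\)-Lipschitz continuity requirement on \(\calV_{t-1}\): the slope of the cut is \(\theta_t^{*}w_t+\sum_{k,l}\kappa_{t,k,l}^{*}u_{t,k,l}\), so I would argue that each \(u_{t,k,l}\) is bounded in dual norm by \(M_t\) because \(\ulQ_t(\cdot;\tilde\xi_l)\) inherits the \(M_t\)-Lipschitz continuity of the regularized cost \(f_t^\Reg\), and use \(\sum_k\sum_l\kappa_{t,k,l}^{*}=1\) from dual feasibility. An analogous uniform bound on \(\theta_t^{*}\nvert{w_t}\) follows from the growth-rate hypothesis together with the remaining dual feasibility constraint \(\theta_t^{*}\le\rho_{t,0}\), yielding a linear cut whose slope is controlled by \(M_t\) up to a constant determined by the data; adjusting the regularization factor accordingly absorbs this into the required bound.
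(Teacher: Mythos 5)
Your argument is essentially the paper's own proof: validity of the cut via the dual feasibility of \((\theta_t^*,\kappa_{t,k,l}^*)\) in~\eqref{eq:ConvexCostRecursionDualProblem} together with \(\ulQ_t\le Q_t\), validity of the overestimate via the chain \(Q_t(x_{t-1};\tilde\xi_l)\le\ulQ_t(x_{t-1};\tilde\xi_l)+\gamma_{t,k}\) lifted to a feasible point of~\eqref{eq:ConvexCostRecursionUnbounded}, and the gap bound \(\tfrac{1}{n_t}\sum_k\gamma_{t,k}\le\gamma_{t,k^*}\), all of which are correct. Your closing paragraph on the \(M_t\)-Lipschitz requirement addresses a point the paper's proof silently omits, but as written it is only a sketch (the bound on \(\theta_t^*\nvert{w_t}\) and the ``absorb into the regularization factor'' step are not actually established); this does not change the fact that the substance of the proof matches the paper's.
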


\begin{proof}
    The validness of \(\calV_{t-1}(\cdot)\) follows directly from the inequality~\eqref{eq:ConvexCostRecursionDualInequality} and the fact that \(\ulQ_t(x_{t-1};\xi_k)\le Q_t(x_{t-1};\xi_k)\) for any \(x_{t-1}\in\calX_{t-1}\) and \(\xi_k\in\Xi_t\) by definition.
    To see the validness of \(v_{t-1}\), note that for any \(k=1,\dots,n_t\) and \(l\in E_{t,k}\), we have
    \[
    \begin{aligned}
        Q_t(x_{t-1};\tilde{\xi}_l)
        &\le\min_{x_t\in\calX_t}\left[f_t(x_{t-1},x_t;\tilde{\xi}_l)+\olcQ_t^i(x_t)\right]\\
        &\le f_t(x_{t-1},x_{t,k,l};\tilde{\xi}_l)+\olcQ_t^i(x_{t,k,l})\\
        &\le f_t(x_{t-1},x_{t,k,l};\tilde{\xi}_l)+\ulcQ_t^i(x_{t,k,l})+\gamma_{t,k,l}\\
        &\le \ulQ_t(x_{t-1};\tilde{\xi}_l)+\gamma_{t,k}
    \end{aligned}
    \]
    by the definition of \(\gamma_{t,k}\) in Algorithm~\ref{alg:convexSSSO}.
    Thus for any optimal solution \(\lambda_t^*\) to the problem~\eqref{eq:ConvexCostRecursionUnbounded} with \(Q_t(x_{t-1};\tilde{\xi}_l)\) replaced by \(\ulQ_t(x_{t-1};\tilde{\xi}_l)\), we have
    \begin{align*}
        &\max_{l\in E_{t,k}}Q_t(x_{t-1};\tilde{\xi}_l)-\lambda_{t,0}^*\tilde{\zeta}_l-\sum_{j=1}^{m_t}\lambda_{t,j}^*g_{t,j}(\tilde{\xi}_l)\\
        &\le \max_{l\in E_{t,k}}\ulQ_t(x_{t-1};\tilde{\xi}_l)-\lambda_{t,0}^*\tilde{\zeta}_l-\sum_{j=1}^{m_t}\lambda_{t,j}^*g_{t,j}(\tilde{\xi}_l)+\gamma_{t,k},
    \end{align*}
    and consequently \(v_{t-1}\ge\calQ_{t-1}(x_{t-1})\) since \(\lambda_t^*\) is also a feasible solution to the minimization in~\eqref{eq:ConvexCostRecursionUnbounded}.
    Finally, the gap is controlled since \(v_{t-1}-\calV_{t-1}(x_{t-1})=\frac{1}{n_t}\sum_{k=1}^{n_t}\gamma_{t,k}\le\gamma_{t,k^*}=\olcQ_t^i(x_t)-\ulcQ_t^i(x_t)\).
\end{proof}

In general, the size of the extreme point set $E_{t,k}$ can grow exponentially with respect to the uncertainty dimension $\delta_t$.
There are two potential remedies.
(1) The for-loop on line~\ref{alg:convexSSSO:enumeration} in Algorithm~\ref{alg:convexSSSO} can be fully parallelized, improving the efficiency of the enumeration step.
We adopt this strategy in Section~\ref{sec:Numerical}.
(2) Alternatively, one may want to reformulate the recursion~\eqref{eq:MDRO-FiniteDimensionalRecursion} as a mixed-integer linear optimization following~\cite[Section 3.2]{georghiou2019robust} instead of calling Algorithm~\ref{alg:convexSSSO}, which could lead to better efficiency particularly on larger dimensions $\delta_t$.

\section{Numerical Experiments}
\label{sec:Numerical}

In this section, we first introduce baseline models used for comparison against the DR-MCO model~\eqref{eq:MDRO-ExtensiveForm}.
Then we present comprehensive numerical studies of two application problems: the multi-commodity inventory problem with either uncertain demands or uncertain prices, and the hydro-thermal power system planning problem with uncertain water inflows.

\subsection{Baseline Models and Experiment Settings}
\label{sec:Numerical:Settings}

For performance comparison, we introduce three types of baseline models in addition to the DR-MCO with Wasserstein ambiguity sets~\eqref{eq:WassersteinAmbiguitySet}.
The first baseline model is the simple multistage robust convex optimization (MRCO) model, where we simply consider the worst-case outcome out of the uncertainty set \(\Xi_t\) in each stage \(t\).
Namely, the cost-to-go functions of the MRCO can be defined recursively as
\begin{equation}\label{eq:Baseline-MRCO}
    \calQ_{t-1}^\Robust(x_{t-1}):=\sup_{\xi_t\in\Xi_t}\min_{x_t\in\calX_t}f_t(x_{t-1},x_t;\xi_t)+\calQ_t^\Robust(x_t),\quad t=T,T-1,\dots,2.
\end{equation}
When the sum \(f_t(x_{t-1},x_t;\xi_t)+\calQ_t^\Robust(x_t)\) is jointly convex in the state \(x_t\) and the uncertainty \(\xi_t\) for any given \(x_{t-1}\), then the supremum can be attained at some extreme point of the convex hull of \(\Xi_t\) if it is finite.
In particular, if we have relatively complete recourse, (i.e., the sum is always finite for any given \(x_{t-1}\)), and if \(\Xi_t\) is a polytope, (i.e., it is a convex hull of finitely many points), then we can enumerate over the extreme points of \(\Xi_t\) to find the supremum, which allows us to solve the simple MRCO by Algorithm~\ref{alg:ConsecutiveDualDP}.
In general, if the uncertainty set \(\Xi_t\) is unbounded, then the cost-to-go functions of the MRCO model can take \(+\infty\) everywhere, so we will only use the baseline MRCO model when we have polytope uncertainty sets \(\Xi_t\).

The second type of baseline models consist of risk-neutral and risk-averse multistage stochastic convex optimization (MSCO) models.
The nominal probability measures in the MSCO models can be either the empirical measure \(\hat{\nu}_t\), or a probability measure associated with the sample average approximation (SAA) of a fitted probability measure, which we denote as \(\tilde{\nu}_t=\frac{1}{n'_t}\sum_{k=1}^{n'_t}\Delta_{\tilde{\xi}_{t,k}}\).
The main difference here is that the outcomes in an SAA probability measure \(\tilde{\xi}_{t,1},\dots,\tilde{\xi}_{t,n'_t}\) can be different from those given by the empirical measure \(\hat{\xi}_{t,1},\dots,\hat{\xi}_{t,n_t}\).
Moreover, we are able to take \(n'_t>n_t\) for a potentially better training effect.
To ease the notation, we also allow \(\tilde{\nu}_t=\hat{\nu}_t\) to happen when we describe the risk measures in the rest of this section.

For the risk-averse MSCO models, we use the risk measure that is called conditional value-at-risk (CVaR, a.k.a.\ average value-at-risk or expected shortfall).
Its coherence leads to a dual representation~\cite{philpott2013solving}, that allows the risk-averse MSCO models solved by Algorithm~\ref{alg:ConsecutiveDualDP} with a straightforward implementation of SSSO.
For simplicity, we only introduce the CVaR risk-averse MSCO based on this dual representation, and any interested reader is referred to~\cite{shapiro2021lectures} for the primal definition and the proof of duality.

Given parameters \(\alpha\in(0,1)\) and \(\beta\in[0,1]\), we define the cost-to-go functions associated with the \((\alpha,\beta)\)-CVaR risk measures recursively for \(t=T,T-1,\dots,2\) as
\begin{equation}\label{eq:Baseline-MSCO-CVaR}
    \calQ_{t-1}^\CVaR(x_{t-1}):=\max_{p_t\in\calP_t^\CVaR}\sum_{k=1}^{n'_t}p_{t,k}\biggl\{\min_{x_t\in\calX_t}f_t(x_{t-1},x_t;\tilde{\xi}_{t,k})+\calQ_t^\CVaR(x_t)\biggr\},
\end{equation}
where the ambiguity set is defined as
\begin{equation}\label{eq:Baseline-MSCO-CVaR-Ambiguity}
    \calP_t^\CVaR:=\left\{
    p_t=(p_{t,1},\dots,p_{t,n'_t})\in\bbR^{n'_t}:
     0\le p_{t,k}\le\frac{\beta}{n_t}+\frac{1-\beta}{\alpha n_t},\ 
     k=1,\dots,n'_t;\ 
     \sum_{k=1}^{n'_t}p_{t,k}=1\right\}.
\end{equation}
Note that when \(\beta=1\), the ambiguity set \(\calP_t^\CVaR=\{(\frac{1}{n'_t},\dots,\frac{1}{n'_t})\}\) has only one element corresponding to the SAA probability measure \(\tilde{\nu}_t\).
Thus the CVaR risk-averse MSCO model~\eqref{eq:Baseline-MSCO-CVaR} reduces to the risk-neutral nominal MSCO in this case.
Alternatively, if \(\beta=0\) and \(\alpha\le \frac{1}{n_t}\), then the CVaR risk-averse MSCO model~\eqref{eq:Baseline-MSCO-CVaR} considers only the worst outcome of the SAA probability measure in each stage.

The third type of baseline models are special versions of our DR-MCO model~\eqref{eq:WassersteinAmbiguitySet}: the probability measures in the Wasserstein ambiguity sets are restricted to those with the same support as the nominal probability measures \(\hat{\nu}_t\).
We refer to such a DR-MCO model as a DR-MCO with \emph{restricted Wasserstein ambiguity sets}.
To be more precise, the cost-to-go functions associated with such restricted Wasserstein ambiguity sets are defined as
\begin{equation}\label{eq:Baseline-MDRO-RWass}
    \calQ_{t-1}^\RWass(x_{t-1}):=\max_{p_t\in\calP_t^\RWass}\sum_{k=1}^{n_t}p_{t,k}\biggl\{\min_{x_t\in\calX_t}f_t(x_{t-1},x_t;\tilde{\xi}_{t,k})+\calQ_t^\RWass(x_t)\biggr\},
\end{equation}
where the ambiguity set is defined as
\begin{equation}\label{eq:Baseline-MDRO-RWass-Ambiguity}
    \calP_t^\RWass:=\left\{p_t=(p_{t,1},\dots,p_{t,n_t})\in\bbR^{n_t}:
    \begin{aligned}
        &\exists\,\pi\in\bbR^{n_t\times n_t},\ \pi_{k,k'}\ge0,\ \forall\,k,k'=1,\dots,n_t,\\
        &\sum_{k=1}^{n_t}\sum_{k'=1}^{n_t}d_t(\hat{\xi}_{t,k},\hat{\xi}_{t,k'})\pi_{k,k'}\le\rho_{t,0},\ \sum_{k=1}^{n_t}p_{t,k}=1,\\
        & \sum_{k=1}^{n_t}\pi_{k,k'}=\frac{1}{n_t},\ \forall\,k'=1,\dots,n_t,\\
        & \sum_{k'=1}^{n_t}\pi_{k,k'}=p_{t,k},\ \forall\,k=1,\dots,n_t,\\
        & \sum_{k=1}^{n_t}p_{t,k}g_{t,j}(\hat{\xi}_{t,k})\le\rho_{t,j},\ \forall\,j=1,\dots,m_t.
    \end{aligned}
    \right\}.
\end{equation}
The restricted Wasserstein ambiguity set~\eqref{eq:Baseline-MDRO-RWass-Ambiguity} is indeed a polyhedral set in the probability mass vector \(p_t\), and has been considered by~\cite{duque2020distributionally}.
We remark that all of the above baseline models, the simple MRCO model~\eqref{eq:Baseline-MRCO}, the CVaR risk-averse MSCO model~\eqref{eq:Baseline-MSCO-CVaR}, and the DR-MCO with restricted Wasserstein ambiguity sets~\eqref{eq:Baseline-MDRO-RWass} can be solved by Algorithm~\ref{alg:ConsecutiveDualDP} since only finitely many outcomes need to be considered in each stage \(t\).
More details on the SSSO for these baseline models can be found in our previous work~\cite{zhang2020distributionally}.

Our numerical experiments aim to demonstrate two attractive aspects of the DR-MCO models on some application problems:
better out-of-sample performance compared to the baseline models, and
ability to achieve out-of-sample performance guarantee with reasonable conservatism.
For ease of evaluation, we assume that we have the knowledge of the true underlying probability measure \(\nu\in\calM(\Xi_2\times\cdots\times\Xi_T)\), and thus the marginal probability measures \(\nu_t:=P^t_*(\nu)\), where \(P^t_*\) is the pushforward of the canonical projection map \(P^t:\Xi_2\times\cdots\Xi_T\to\Xi_t\), for \(t=2,\dots,T\).
Here, we do not restrict our attention to the case that \(\nu\) is a product of \(\nu_2,\dots,\nu_T\), i.e., the true probability measure is stagewise independent, so our modeling~\eqref{eq:MDRO-RecursiveForm} can be used as approximation for problems under general stochastic processes.
The experiments are then carried out in the following procedures with a uniform number of data points \(n_t=n\) in each stage.
\begin{compactenum}
    \item Draw \(n\) iid samples from \(\nu\) to form the empirical probability measures \(\hat{\nu}_t\);
    \item Construct the baseline models and DR-MCO models using \(\hat{\nu}_t\);
    \item Solve these models using our DDP algorithm (Algorithm~\ref{alg:ConsecutiveDualDP}) to a desired accuracy or within the maximum number of iterations or computation time;
    \item Draw \(N\) iid sample paths from \(\nu\) and evaluate the performance profiles (mean, variance, and quantiles) of the models on these sample paths.
\end{compactenum}
In particular, we focus on limited or moderate training sample sizes \(n\in\{5,10,20,40\}\), while keeping our sizes of evaluation sample paths to be relatively large (\(N=100,000\)).
In each independent test run of our numerical experiment, the training samples used in a smaller-sized test are kept in larger-sized tests, and the evaluation sample paths are held unchanged for all models and sample sizes.
We remark that our assumption of the knowledge on the true probability measure $\nu_t$ is simply to facilitate a more accurate evaluation of the obtained policy through a large $N$.
In practice, cross validation procedure is often used to calibrate the model and select the Wasserstein radius~\cite{duque2020distributionally,park2022data}.

Our algorithms and numerical examples are implemented using Julia 1.6~\cite{bezanson2017julia}, with Gurobi 9.0~\cite{gurobi} interfaced through the JuMP package (version 0.23)~\cite{dunning2017}.
We use 25 CPUs (24 for the worker processes and 1 for the manager process) with 50 GByte of RAM to allow parallelization of the SSSO (Algorithm~\ref{alg:convexSSSO}).

\subsection{Multi-commodity Inventory Problems}
\label{sec:Numerical:Inventory}

We consider a multi-commodity inventory problem which is adapted from the ones studied in~\cite{georghiou2019robust} and~\cite{zhang2020distributionally}. 
Let \(\calJ\coloneqq\{1,2,\dots,J\}\) denote the set of product indices.
We first describe the variables in each stage \(t\in\calT\). 
We use \(x^l_{t,j}\) to denote the variable of inventory level, \(y^a_{t,j}\) (resp. \(x^b_{t,j}\)) to denote the amount of express (resp.\ standard) order fulfilled in the current (resp.\ subsequent) stage, and \(y^r_{t,j}\) to denote the amount of rejected order of each product \(j\in\calJ\).
Let \(x_t:=(x^l_{t,1},\dots,x^l_{t,K},x^b_{t,1},\dots,x^b_{t,K})\) be the state variable and \(y_t:=(y^a_{t,1},\dots,y^a_{t,K},y^r_{t,1},\dots,y^r_{t,K})\) be the internal variable for each stage \(t\in\calT\).
The stage-\(t\) subproblem can be defined through the local cost functions \(f_t\) as
\begin{align}\label{eq:InventoryProblem}
    f_t(x_{t-1},x_t;\xi_t)\coloneqq
    \min_{y_t}\quad&
    C^F + \sum_{j\in\calJ}\left(C^a_{t,j} y^a_{t,j} + C^b_{t,j} x^b_{t,j} + C^r_j y^r_{t,j} + C^H_j[x^l_{t,j}]_+ + C^B_j[x^l_{t,j}]_-\right)\\
    \mathrm{s.t.}\quad 
    & \sum_{j\in\calJ}y^a_{t,j}\le B^c,\notag\\
    & x^l_{t,j}\le x^l_{t-1,j}+y^a_{t,j}+x^b_{t-1,j}+y^r_{t,j}-D_{t,j}, \forall\, j\in\calJ,\notag\\
    & y^a_{t,j}\in[0,B^a_j],\ y^r_{t,j}\in[0,D_{t,j}],\  \forall\,j\in\calJ,\notag\\
    & x^b_{t,j}\in[0,B^b_j],\ x^l_{t,j}\in[B^{l,-}_j,B^{l,+}_j],\  \forall\,j\in\calJ.\notag
\end{align}
In the definition~\eqref{eq:InventoryProblem}, we use \(C^a_{t,j}=C^a_{t,j}(\xi_t)\) (resp.\ \(C^b_{t,j}=C^b_{t,j}(\xi_t)\)) to denote the uncertain express (resp.\ standard) order unit cost,
\(C^H_j\) (resp.\ \(C^B_j\)) the inventory holding (resp.\ backlogging) unit cost, \(C^r_j\) the penalty on order rejections, \(C^F\equiv 1\) a positive fixed cost,
\(B^a_j\) (resp.\ \(B^b_j\)) the bound for the express (resp.\ standard) order, 
and \(B^{l,-}_j,B^{l,+}_j\) the bounds on the backlogging and inventory levels,
\(D_{t,j}=D_{t,j}(\xi_t)\) the uncertain demand for the product \(j\), respectively.
The first constraint in~\eqref{eq:InventoryProblem} is a cumulative bound \(B^c\) on the express orders, the second constraint characterizes the change in the inventory level, and the rest are bounds on the decision variables with respect to each product.
The notations \([x]_+\coloneqq\max\{x,0\}\) and \([x]_-\coloneqq-\min\{0,x\}\) are used to denote the positive and negative parts of a real number \(x\).
The initial state is given by \(x_{0,j}^b=x_{0,j}^l=0\) for all \(j\in\calJ\).
Before we discuss the uncertain parameters \(C^a_{t,j},C^b_{t,j}\) or \(D_{t,j}\), we make the following remarks on the definition~\eqref{eq:InventoryProblem}.

First, as long as the demand $D_{t,1},\dots,D_{t,J}$ are bounded from above, the internal variables $y_t$ is constrained in a compact set $\calY_t$, so the defined $f_t$ fits into our formulation~\eqref{eq:MDRO-ExtensiveForm} as discussed in Example~\ref{ex:ConstrainedFormulation}.
Second, it is easy to check that if \(C_{t,j}^a,C_{t,j}^b\) (resp.\ \(D_{t,j}\)) are deterministic, then Assumption~\ref{assum:ConvexUncertaintyCost} (resp.\ Assumption~\ref{assum:ConcaveUncertaintyCost}) is satisfied so we are able to apply the SSSO implementations discussed in Sections~\ref{sec:Algorithm:ConvexSingleStageSubproblem} and~\ref{sec:Algorithm:ConcaveSingleStageSubproblem}.
Third, as the bounds \(B_j^{l,-},B_j^{l,+}\) do not change with \(t\) and all orders can be rejected (i.e., \(y_{t,j}^r=D_{t,j}\) is feasible for all \(j\in\calJ\)), we see that the problem~\eqref{eq:InventoryProblem} has relatively complete recourse.
Fourth, the Lipschitz constant of the value functions \(Q_t(\cdot;\xi_t)\) is uniformly bounded by \(\sum_{j\in\calJ}C_j^r\), so Proposition~\ref{prop:LipschitzRegularization} can be applied here if we set the regularization factors to be sufficiently large \(M_t\ge\sum_{j\in\calJ}C_j^r\).
Besides, the Lipschitz continuity guarantees the in-sample adjustable conservatism by Theorem~\ref{thm:InSampleConservatism}.  
Last, since all state variables are bounded, together with the above observation, we know by Theorem~\ref{thm:CDDPComplexityBound} that Algorithm~\ref{alg:ConsecutiveDualDP} would always converge on our inventory problem~\eqref{eq:InventoryProblem}.

\subsubsection{Inventory Problems with Uncertain Demands}
\label{sec:InventoryUncertainDemand}

First, we consider the inventory problems with uncertain demands, where the goal is to seek a policy with minimum mean inventory cost plus the penalty on order rejections.
The uncertain demands are modeled by the following expression:
\begin{equation}
    D_{t,j}(\xi_{t}):=D_0\left[1+\cos\biggl(\frac{2\pi(t+j)}{\tau}\biggr)\right]+\bar{D}\cdot\xi_{t,j},\quad j\in\calJ,t\in\calT.
\end{equation}
Here, \(D_0\) is a factor and \(\tau\) is the period for the base demands, and \(\bar{D}\) is the bound on the uncertain demands.
The uncertainty set $\Xi_t:=[0,1]^J$ is a $J$-dimensional box, and the true distribution of the uncertainty vector $\xi_t$ is 
described as follows: 
\(\xi_{t,1}\sim\mathrm{Uniform}(0,1)\), and for \(j=2,\dots,J\), we have
\begin{equation}
    \xi_{t,j}\mid\xi_{t,j-1}\sim\left\{\begin{aligned}
        &\mathrm{Uniform}(0,(1+\xi_{t,j-1})/2),\quad &&\text{if }\xi_{t,j-1}\le\frac{1}{2},\\
        &\mathrm{Uniform}(\xi_{t,j-1}/2,1),\quad &&\text{otherwise}.
    \end{aligned}\right.
\end{equation}
For the experiments, we consider \(J=3\) products and \(T=\tau=5\) stages.
The unit prices of each product are deterministically set to \(C^a_{t,j}=5\) and \(C^b_{t,j}=1\) for all \(t\in\calT\);
the inventory and holding costs are \(C_j^H=2\) and \(C_j^B=10\), and the rejection costs are \(C_j^r=100\), for each \(j\in\calJ\).
The bounds are set to \(B^c=15\), \(B_j^a=10\), \(B_j^b=20\), \(B_j^{l,-}=10\), and \(B_j^{l,+}=100\) for each \(j\in\calJ\).
We pick the uncertainty parameters \(D_0=5\) and \(\bar{D}=50\).
We terminate the DDP algorithm if it reaches 1\% relative optimality or 2000 iterations.
For Wasserstein ambiguity sets, we only consider the radius constraint (i.e., \(m_t=0\)) with radius set to be relative to the following estimation of the distance among data points:
\begin{equation}\label{eq:RelativeWassersteinRadiusEstimation}
    \hat{d}_t:=\max_{k=1,\dots,n_t}W_t(\hat{\nu}_t,\Delta_{\hat{\xi}_{t,k}})=\max_{k=1,\dots,n_t}\frac{1}{n_t}\sum_{k'\neq k}\nvert{\hat{\xi}_{t,k}-\hat{\xi}_{t,k'}}.
\end{equation}
For the MSCO models, we directly use the empirical probability measures \(\hat{\nu}_t\) for each \(t\in\calT\).
Further, we consider parameters \(\alpha\in\{0.01,0.05,0.10\}\) and \(\beta\in\{0.0,0.25,0.50,0.75\}\) for the CVaR risk-averse MSCO models.

Using the experiment procedure described in Section~\ref{sec:Numerical:Settings}, we present the results of our data-driven DR-MCO model with Wasserstein ambiguity sets and the baseline models.

\begin{figure}[htbp]
    \centering
    \includegraphics[width=1.0\textwidth]{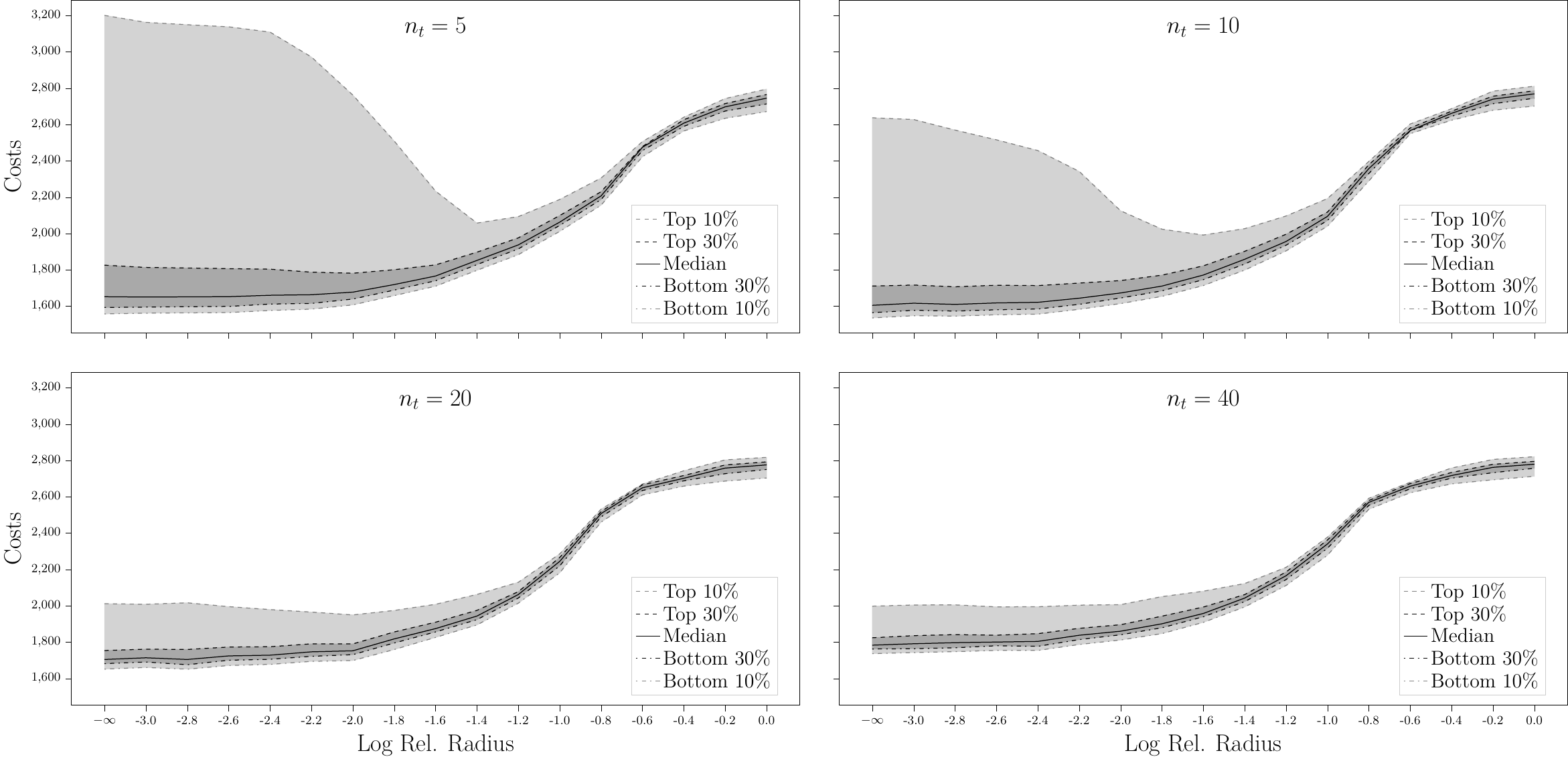}
    \caption{Out-of-sample Cost Quantiles for Different Radii on Multi-commodity Inventory with Uncertain Demands}
    \label{fig:DemandLinePlot}
\end{figure}

Figure~\ref{fig:DemandLinePlot} (and Figures~\ref{fig:DemandLinePlot1} and~\ref{fig:DemandLinePlot2} in Section~\ref{sec:SupplementalResults}) displays the out-of-sample cost quantiles of the nominal stochastic model and the DR-MCO models with different Wasserstein radii, constructed from the empirical probability measures \(\hat{\nu}_t\).
Here we use the log radius \(-\infty\) to denote the nominal stochastic model, i.e., \(\rho_{t,0}=0\).
From the plot, we see that in small-sample case (\(n_t=5\)), the Wasserstein DR-MCO model significantly reduces the top 10\% out-of-sample evaluation costs when the radius is set to be $10^{-1.6}$-$10^{-1.2}$ of the estimation \(\hat{d}_t\), and as we will see in Figure~\ref{fig:DemandScatterPlot} below, it reduces the out-of-sample mean consequently.
Moreover, the difference between top 10\% and bottom 10\% of the out-of-sample evaluation costs becomes smaller around \(10^{-0.8}\cdot\hat{d}_t\) even for larger sample sizes.
However, the median out-of-sample cost increases with the Wasserstein radius, suggesting that larger Wasserstein radii may lead to overly conservative policies.

\begin{figure}[htbp]
    \centering
    \includegraphics[width=1.0\textwidth]{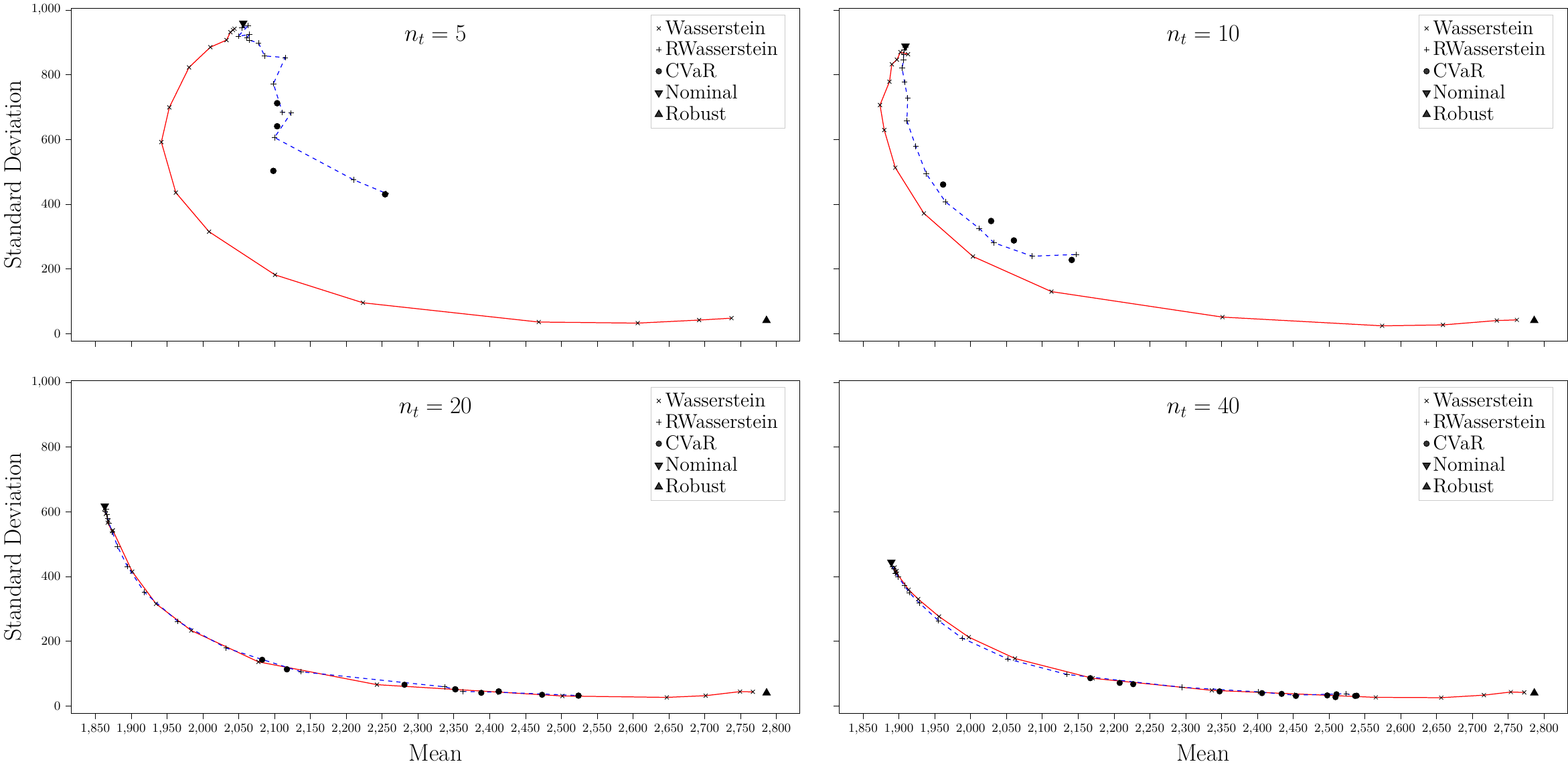}
    \caption{Comparison against Baseline Models on Multi-commodity Inventory with Uncertain Demands}
    \label{fig:DemandScatterPlot}
\end{figure}

To better quantify the trade-off between mean and variance of the out-of-sample evaluation costs, we present Figure~\ref{fig:DemandScatterPlot} (and Figures~\ref{fig:DemandScatterPlot1} and~\ref{fig:DemandScatterPlot2} in Section~\ref{sec:SupplementalResults}).
Here, the lines connect the points representing our Wasserstein DR-MCO models~\eqref{eq:WassersteinAmbiguitySet} from the smallest radius to the largest one.
The policies obtained from DR-MCO with restricted Wasserstein ambiguity sets~\eqref{eq:Baseline-MDRO-RWass} are labeled by \(\mathrm{RWasserstein}\).
We say one policy dominates another policy if the former has smaller mean and standard deviation than the latter does, and have the following observations.
First, in all cases, the policy from the MRCO model is dominated by some policy from the Wasserstein DR-MCO model, and also by some CVaR MSCO model when \(n_t\) is large.
Second, our DR-MCO model with the Wasserstein ambiguity sets~\eqref{eq:WassersteinAmbiguitySet} leads to policies that dominate those from the CVaR MSCO models, as well as those from the DR-MCO with restricted Wasserstein ambiguity sets, when the sample size is relatively small, \(n_t=5\) or \(10\).
This reconfirms the value of considering outcomes of the uncertainty \(\xi_t\) other than the data \(\hat{\xi}_{t,1},\dots,\hat{\xi}_{t,n_t}\) when the data size is limited.
Last, policies from the CVaR MSCO model or the DR-MCO model with restricted Wasserstein ambiguity sets are usually quite close to, and could sometimes penetrate the frontier formed by policies from the DR-MCO model with Wasserstein ambiguity sets, when the sample sizes increase to \(n_t=20\) or \(40\).
This phenomenon contrasts with the limited data size cases, and supports the common practice of using CVaR MSCO or DR-MCO with restricted Wasserstein ambiguity sets.

\subsubsection{Inventory Problems with Uncertain Prices}
\label{sec:InventoryUncertainPrice}

Now we discuss the inventory problems with uncertain prices and fixed demands.
Such problem can be viewed as a simplified model for supply contract problems~\cite{li1999flexible}, where the goal is to estimate the total cost of such supply contract and under-estimation is undesirable.
The uncertain prices are modeled by the following expression:
\begin{equation}
    C^b_{t,j}(\xi_{t}):=\xi_{t,j},\quad C^a_{t,j}(\xi_t):=C_1\cdot \xi_{t,j},\quad j\in\calJ,t\in\calT.
\end{equation}
Here, \(C_1\) is a factor for express orders.
The uncertain vector \(\xi_t\) follows a lower-truncated multivariate normal distribution supported on $\Xi_t:=[\ubar{C},+\infty)^J\subseteq\bbR_{\ge0}^J$:
\begin{equation}
    \xi_{t}:=\max\left\{\mathrm{Normal}(\mu_t,\bar{C}\cdot\Sigma_t),\ubar{C}\right\},\quad
    \mu_t:=C_0\left[1+\sin\bigl(2\pi(t+j)/\tau\bigr)\right],
\end{equation}
where the maximum is taken componentwise, \(C_0\) is a factor for base prices, \(\tau\) is the period, \(\bar{C}\) is the magnitude on the price variation, \(\ubar{C}\ge0\) is the lower bound on the prices, and the covariance matrix \(\Sigma_t\) is randomly generated (by multiplying a uniformly distributed random matrix with its transpose) and normalized to have its maximum eigenvalue equal to 1.
The demands are deterministically given by
\begin{equation}
    D_{t,j}:=D_0\left[1+\cos\bigl(2\pi(t+j)/\tau\bigr)\right]+\bar{D},\quad j\in\calJ,t\in\calT.
\end{equation}
For the experiments, we consider \(J=5\) products, \(T=10\) stages, and the period \(\tau=5\).
The price uncertainty has parameters \(C_0=1\), \(C_1=5\), \(\bar{C}=0.1\), and \(\ubar{C}=0.001\).
We choose the demand parameters \(D_0=5\) and \(\bar{D}=10\).
The inventory and holding costs are \(C_j^H=1\) and \(C_j^B=10\), and the rejection costs are \(C_j^r=100\), for each \(j\in\calJ\).
The bounds are set to \(B^c=15\), \(B_j^a=10\), \(B_j^b=20\), \(B_j^{l,-}=20\), and \(B_j^{l,+}=20\) for each \(j\in\calJ\).
The Wasserstein radii in the DR-MCO models are set relatively with respect to \(\hat{d}_t\) defined in~\eqref{eq:RelativeWassersteinRadiusEstimation}.
The baseline MSCO models are constructed in the same way as described in Section~\ref{sec:InventoryUncertainDemand}.
We do not consider the baseline MRCO model here as the uncertainty set \(\Xi_t\) is unbounded.

\begin{figure}[htbp]
    \centering
    \includegraphics[width=1.0\textwidth]{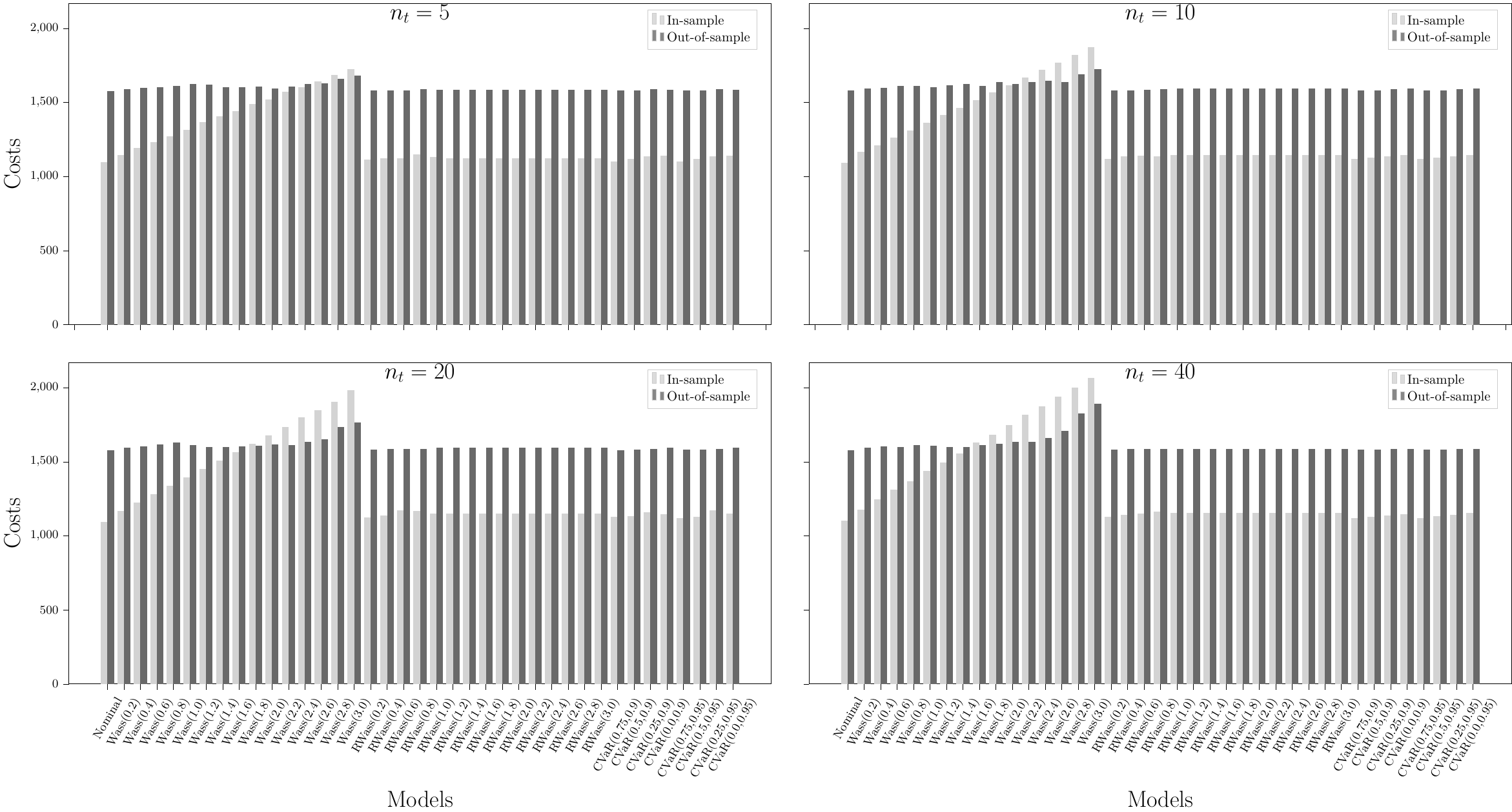}
    \caption{In-sample and Out-of-sample Mean Costs on Multi-commodity Inventory with Uncertain Prices}
    \label{fig:SupplyPlot}
\end{figure}

We plot the in-sample objective costs and out-of-sample mean evaluation costs in Figure~\ref{fig:SupplyPlot} (and Figures~\ref{fig:SupplyPlot1} and~\ref{fig:SupplyPlot2} in Section~\ref{sec:SupplementalResults}).
The label \(\mathrm{Nominal}\) refers to the nominal risk-neutral MSCO model using the empirical probability measures \(\hat{\nu}_t\);
\(\mathrm{Wass}(\gamma)\) and \(\mathrm{RWass}(\gamma)\) refer to the DR-MCO models with Wasserstein and restricted Wasserstein ambiguity sets, in which the radius \(\rho_{t,0}=\gamma\cdot\hat{d}_t\) in each stage \(t\ge 2\), respectively; 
and \(\mathrm{CVaR}(\alpha,\beta)\) refers to the CVaR risk-averse MSCO model with parameters \(\alpha\) and \(\beta\).
As the uncertainty vectors now have an unbounded support, the robust model is no longer applicable.
We see that in all cases, the in-sample objective cost grows linearly with respect to the Wasserstein distance, as predicted by Theorem~\ref{thm:InSampleConservatism}.
As the nominal stochastic model inevitably under-estimates the mean evaluation costs, using Wasserstein DR-MCO models with a relative radius \(\gamma\in[1.6,2.4]\) depending on the sample size \(n_t\) could achieve the out-of-sample performance guarantee in almost all test cases.
Moreover, none of the CVaR MSCO models, or the DR-MCO models with restricted Wasserstein ambiguity sets in the experiments could achieve similar effect.
In fact, once \(\gamma\ge1\) (see~\eqref{eq:RelativeWassersteinRadiusEstimation}), the DR-MCO models with restricted Wasserstein ambiguity sets consider only the worst-case outcome of the existing \(n_t\) samples in each stage, and thus the in-sample values do not further increase with the radius.
Thus we believe that the DR-MCO models with Wasserstein ambiguity sets~\eqref{eq:WassersteinAmbiguitySet} are particularly more favorable in the context of supply contracts.
It is however worth mentioning that we do not observe any improvement of the mean or the variance of evaluation costs from the Wasserstein DR-MCO model over the baseline models.

\subsection{Hydro-thermal Power Planning Problem}
\label{sec:Numerical:Hydrothermal}

We next consider the Brazilian interconnected power system planning problem described in~\cite{ding2019python}.
Let \(\calJ=\{1,\dots,J\}\) denote the indices of four regions in the system, and \(\calL=\cup_{j\in\calJ}\calL_j\) the indices of thermal power plants, where each of the disjoint subsets \(\calL_j\) is associated with the region \(j\in\calJ\).
We first describe the decision variables in each stage \(t\in\calT\). 
We use \(x^l_{t,j}\) to denote the stored energy level, \(y^h_{t,j}\) to denote the hydro power generation of some region \(j\in\calJ\); 
and \(y^g_{t,l}\) to denote the thermal power generation for some thermal power plant \(l\in\calL\).
For two different regions \(j\neq j'\in\calJ\), we use \(y^e_{t,j,j'}\) to denote the energy exchange from region \(j\) to region \(j'\), and \(y^a_{t,j,j'}\) to denote the deficit account for region \(j\) in region \(j'\).
Let \(x_t:=(x^l_{t,1},\dots,x^l_{t,J})\) be the state vector of energy levels, \(y_t\) the internal decision vector consisting of \(y^h_{t,j}\) for \(j\in\calJ\), 
\(y^g_{t,l}\) for all \(l\in\calL\),  
\(y^e_{t,j,j'}\) and \(y^a_{t,j,j'}\) for any \(j\neq j'\in\calJ\);
and \((\xi_{t,1},\dots,\xi_{t,J})\) the uncertain vector energy inflows in stage \(t\) that is supported on the set $\Xi_t:=\bbR_{\ge0}^J$.
We define the DR-MCO by specifying each \(f_t\) as
\begin{align}\label{eq:PowerSystem}
    f_t(x_{t-1},x_t;\xi_t)\coloneqq
    \min_{y_t}\quad&\sum_{j\in\calJ}\bigg(\sum_{l\in\calL_j}C^g_l y^g_{t,l} + \sum_{j'\neq j}\big(C^e_{j,j'} y^e_{t,j,j'} + C^a_{j,j'} y^a_{t,j,j'}\big)\\
                   &\qquad + C^s(x_{t-1,j}^l+\xi_{t,j}-x_{t,j}^l-y_{t,j}^h) \bigg)\notag\\
    \mathrm{s.t.}\quad& x^l_{t,j}+y^h_{t,j}\le x^l_{t-1,j}+\xi_{t,j},\ \forall\,j\in\calJ,\notag\\
    & y^h_{t,j}+\sum_{l\in\calL_j}y^g_{t,l}+\sum_{j'\neq j}(y^a_{t,j,j'}-y^e_{t,j,j'}+y^e_{t,j',j})=D_{t,j}, \ \forall\,j\in\calJ,\notag\\
    & y^g_{t,l}\in[B^{g,-}_l,B^{g,+}_l],\ \forall\,l\in\calL,\notag\\
    & x^l_{t,j}\in[0,B^l_j],\ y^h_{t,j}\in[0,B^h_j],\  \forall\,j\in\calJ,\notag\\
    & y^a_{t,j,j'}\in[0,B^a_{j,j'}],\ y^e_{t,j,j'}\in[0,B^e_{j,j'}],\  \forall\,j,j'\in\calJ.\notag
\end{align}
Here in the formulation, \(C^s\) denotes the unit penalty on energy spillage $x_{t-1,j}^l+\xi_{t,j}-x_{t,j}^l-y_{t,j}^h$,
\(C^g_l\) the unit cost of thermal power generation of plant \(l\),
\(C^e_{j,j'}\) the unit cost of power exchange from region \(j\) to region \(j'\),
\(C^a_{j,j'}\) the unit cost on the energy deficit account for region \(j\) in region \(k'\),
\(D_{t,j}\) the deterministic power demand in stage \(t\) and region \(k\),
\(B^l_j\) the bound on the storage level in region \(j\),
\(B^h_j\) the bound on hydro power generation in region \(j\),
\(B^{g,-}_l,B^{g,+}_l\) the lower and upper bounds of thermal power generation in plant \(l\),
\(B^a_{j,j'}\) the bound on the deficit account for region \(j\) in region \(j'\) such that \(\sum_{j'\neq j}B_{j,j'}^a=D_{t,j}\),
and \(B^e_{j,j'}\) the bound on the energy exchange from region \(j\) to region \(j'\).
The first constraint in~\eqref{eq:PowerSystem} characterizes the change of energy storage levels in each region, the second constraint imposes the power generation-demand balance for each region, and the rest are bounds on the decision variables.
The initial state \(x_0\) and uncertainty vector \(\xi_1\) are given by data.
In our experiment, we consider \(T=13\) and all other parameters used in this problem can be found in~\cite{ding2019python}.

For the problem~\eqref{eq:PowerSystem}, we have the following remarks.
First, we always have the relatively complete recourse as we allow spillage for extra energy inflows and deficit for energy demands in each region.
Then it is straightforward to check that the Lipschitz constant of \(f_t\) in either \(x_{t-1}\) or \(\xi_t\) can be bounded by the maximum of the deficit cost \(C_{j,j'}^a\) and the spillage penalty \(C^s\).
Second, as now the uncertainty has an unbounded support \(\bbR^J_{\ge0}\), we need to estimate the growth rate of the value function \(r_t(x_{t-1})\).
Note that if for any region the inflow is sufficiently large such that all demands and energy exchanges have met their upper bounds, then the only cost incurred by further increasing the inflow is simply the spillage penalty.
Thus we conclude that \(r_t(x_{t-1})=C^s\), which is a constant function for all \(x_{t-1}\in\calX_{t-1}\).
Last, it is easy to see that Assumption~\ref{assum:ConvexUncertaintyCost} holds for the problem~\eqref{eq:PowerSystem} and that the state variables \(\calX_t\) is compact, so our SSSO implementation (Algorithm~\ref{alg:convexSSSO}) would guarantee the convergence of our DDP algorithm (Algorithm~\ref{alg:ConsecutiveDualDP}).

Now we assume that the true uncertainty can be described by the following logarithmic autoregressive time series:
\begin{equation}\label{eq:InflowTimeSeries}
    \ln{\xi_t}-\mu_t=\phi_t(\ln{\xi_{t-1}}-\mu_{t-1})+\epsilon_t,\quad
    \epsilon_t\sim\mathrm{Normal}(0,\Sigma_t),
\end{equation}
where the logarithm and the product are taken componentwise, and the parameters \(\mu_t,\phi_t\in\bbR^J\) and \(\Sigma_t\in\calS_{\succeq0}^J\) are fit from historical data (see modeling details in~\cite{shapiro2012final} and coding details in~\cite{ding2019python}).
Note that~\eqref{eq:InflowTimeSeries} is not linear with respect to the uncertainty vectors \(\xi_t\), and consequently it cannot directly be reformulated into a stagewise independent MSCO (or a DR-MCO)~\cite{lohndorf2019modeling}.
While there are approaches based on Markov chain DDP or linearized version of the model~\eqref{eq:InflowTimeSeries}, they would require alteration of the DDP algorithm or an increase in the state space dimension.
Alternatively, we would like to study the effects of the stagewise independence assumption~\cite{duque2020distributionally} and the Wasserstein ambiguity sets in our DR-MCO model~\eqref{eq:MDRO-RecursiveForm}.

We can see that under the assumption on the true uncertainty~\eqref{eq:InflowTimeSeries}, each uncertainty \(\xi_t\) follows a multivariate lognormal distribution.
Thus instead of directly using the empirical probability measures \(\hat{\nu}_t\) in the MSCO models, we can fit lognormal distributions based on the empirical outcomes \(\hat{\xi}_{t,1},\dots,\hat{\xi}_{t,n_t}\), from which we further construct the SAA probability measures \(\tilde{\nu}_t\) for the MSCO models.
Moreover, we can also use this SAA probability measure \(\tilde{\nu}_t\) to estimate the Wasserstein distance bound \(\rho_{t,0}\) using
\begin{equation}\label{eq:SAAWassersteinRadiusEstimation}
    \begin{aligned}
        \tilde{d}_t:=W_t(\hat{\nu}_t,\tilde{\nu}_t)=
        \min_{\pi_{k,k'}\ge0}\quad & \sum_{k=1}^{n_t}\sum_{k'=1}^{n'_t}\pi_{k,k'}\cdot d_t(\hat{\xi}_{t,k},\tilde{\xi}_{t,k'})\\
        \mathrm{s.t.}\quad& \sum_{k'=1}^{n'_t}\pi_{k,k'}=\frac{1}{n_t},\quad k=1,\dots,n_t,\\
                          & \sum_{k=1}^{n_t}\pi_{k,k'}=\frac{1}{n'_t},\quad k=1,\dots,n'_t.
    \end{aligned}
\end{equation}
We then set the Wasserstein radius to be \(\rho_{t,0}=\gamma\cdot\tilde{d}_t\) for the relative factors \(\gamma=10^{-2.0},10^{-1.8},\dots,10^{-0.2},1.0\).
For the baseline risk-averse MSCO models, we use CVaR parameters \(\alpha=0.1\) and \(\beta\in\{0.0,0.1,\dots,0.9,1.0\}\).

Note that as the SAA resampling step is random, the performance of our DR-MCO models and MSCO models would also be random.
In addition, as it is often very challenging to solve the problem~\eqref{eq:PowerSystem} to certain optimality gap, we choose to terminate it with a maximum of 1000 iterations, and allow random sampling in the Algorithm~\ref{alg:ConsecutiveDualDP}, in which the noninitial stage step does not strictly follow Definition~\ref{def:NoninitialStageOracle} and guarantees only the validness.
The benefit of such random sampling is that empirically the under-approximations (hence the policies) often improve faster especially in the beginning stage of the algorithms.
We refer any interested readers to stochastic DDP literature (e.g.,~\cite{baucke2017deterministic,zhang2022stochastic,lan2020complexity}) for more details.

\begin{figure}[htbp]
    \centering
    \includegraphics[width=1.0\textwidth]{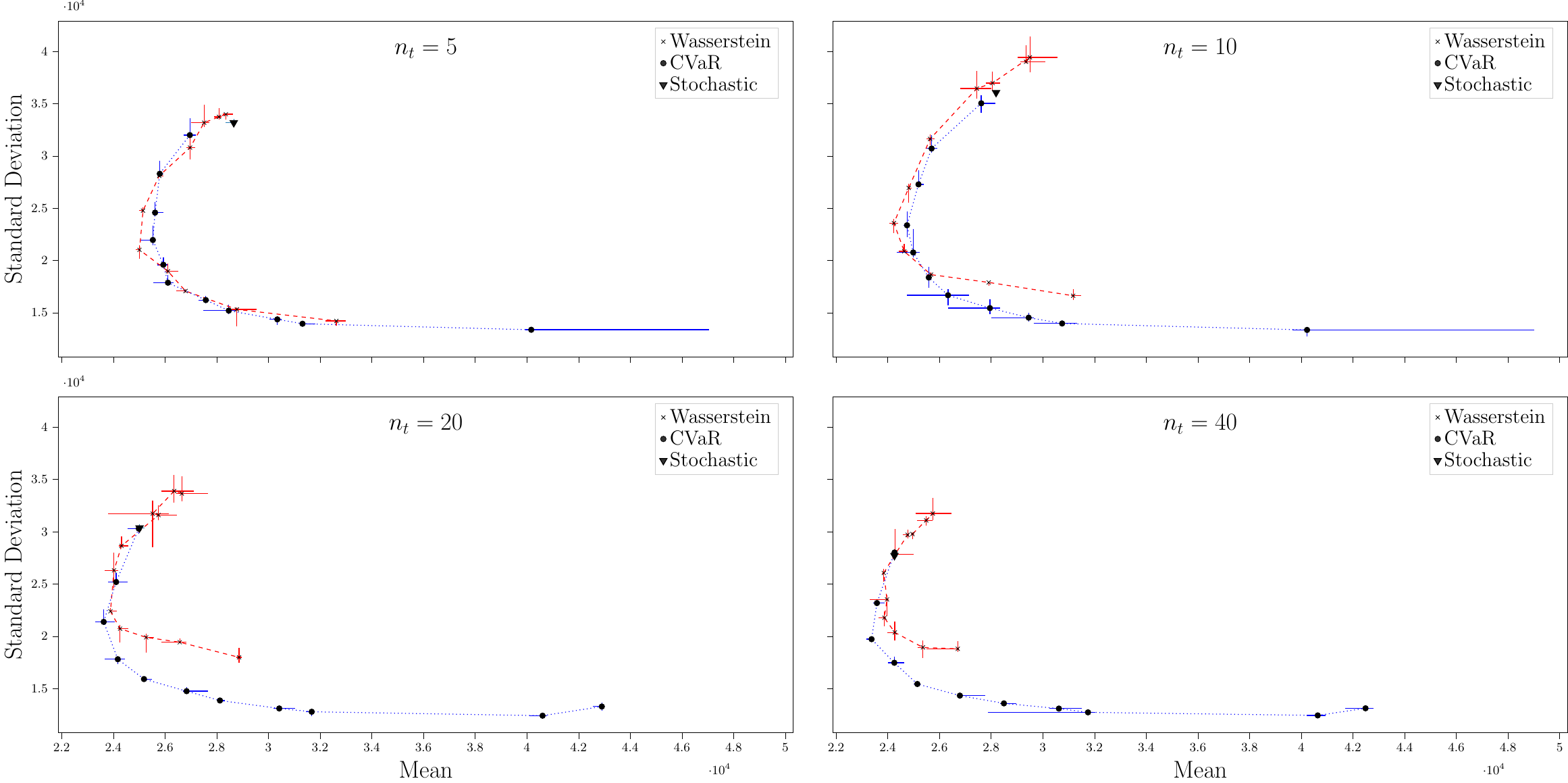}
    \caption{Comparison against Baseline Models on Hydro-thermal Power Planning}
    \label{fig:HydroPlot}
\end{figure}

Due to the randomness of the models and the algorithm, we repeat each test 3 times with the same empirical probability measures \(\hat{\nu}_t\) in the experiment procedure.
Given the significant computational requirements of the hydro-thermal problems, we focus the comparison of our Wasserstein DR-MCO~\eqref{eq:WassersteinAmbiguitySet} against the most critical benchmarks: the nominal and the CVaR MSCO models. These are widely recognized as the most popular models for this problem class ~\cite{shapiro2012final,shapiro2013risk}.
Then we plot the median values with error bars indicating the maximum and minimum values in Figure~\ref{fig:HydroPlot} (and Figures~\ref{fig:HydroPlot1} and~\ref{fig:HydroPlot2} in Section~\ref{sec:SupplementalResults}) with increasing values of \(\gamma\) in the Wasserstein DR-MCO models and decreasing values of \(\beta\) in the CVaR risk-averse MSCO models.
First, we see that most of the Wasserstein DR-MCO models and the CVaR risk-averse MSCO models achieve better performances in either the mean or the standard deviation of evaluation costs, compared with the risk-neutral MSCO model.
This observation justifies the usage of ambiguity sets or risk aversion when we approximate a stagewise dependent problem with stagewise independent models. 
Second, when the sample sizes are small (\(n_t=5\) or \(10\)), we see that the Wasserstein DR-MCO could outperform the CVaR risk-averse MSCO models (e.g., for \(\gamma\in[10^{-1.0},10^{-0.6}]\)) in the out-of-sample mean cost.
Third, as the sample size grows to \(n_t=20\) or \(40\), while the policies obtained from Wasserstein DR-MCO models are always dominated by those from the CVaR risk-averse MSCO models, the latter could achieve both lower mean cost and standard deviation.
Our conjecture is that for larger sample sizes, the cost-to-go functions of the Wasserstein DR-MCO models have more complicated shapes, thus making it hard to approximate in the limited 1000 iterations.
This also suggests that the CVaR risk-averse MSCO models could lead to good out-of-sample performances when an probability distribution fitting is possible and the computation budget is limited.

\section{Concluding Remarks}
\label{sec:Conclusion}

In this work, we study the data-driven DR-MCO models with Wasserstein ambiguity sets.
We show that with a sufficiently large Wasserstein radius, such DR-MCO model satisfies out-of-sample performance guarantee with high probability even with limited data sizes.
Using convex dual reformulation, we show that the in-sample conservatism is linearly bounded by the radius when the value functions are Lipschitz continuous in the uncertainties.
To numerically solve the data-driven DR-MCO models, we design exact SSSO subproblems for the DDP algorithms by exploiting either the concavity of the convexity of the cost functions in terms of the uncertainties.
We conduct extensive numerical experiments to compare the performance of our DR-MCO models against MRCO models, risk-neutral, risk-averse MSCO models  and DR-MCO models with restricted Wasserstein ambiguity sets.
On the multi-commodity inventory problems with uncertain demands, we observe that the DR-MCO models are able to provide policies that dominate those baseline model policies in the out-of-sample evaluations when the in-sample data size is small.
Moreover, on the inventory problems with uncertain prices, the DR-MCO models could achieve out-of-sample performance guarantee with little compromise of the objective value, which has not been achieved by the baseline models.
On the hydro-thermal power planning problems with uncertain energy inflows, we see that with limited number of iterations, while the policies from the DR-MCO models could achieve better out-of-sample performances than the risk-averse MSCO models for small data sizes, they are dominated by the risk-averse MSCO models for larger data sizes.
We hope these numerical experiments could serve as benchmarks for future studies on DR-MCO problems.

\bibliographystyle{plain}
\bibliography{ref}

\clearpage
\appendix
\section{Finite-dimensional Dual Recursion for DR-MCO}
\label{sec:DualRecursion}

In this section, we briefly review some general strong Lagrangian duality theory and then apply it to derive our finite-dimensional dual recursion for our DR-MCO problems~\eqref{eq:MDRO-RecursiveForm}.

\subsection{Generalized Slater Condition and Lagrangian Duality}
\label{sec:StrongDuality}

Given an \(\bbR\)-vector space \(\calM\), we consider the following optimization problem.
\begin{align}\label{eq:GeneralPrimalProblem}
    v^{\rmP}:=\inf_{\mu\in C}\quad& \varphi_0(\mu)\\
    \mathrm{s.t.}\quad&\varphi_j(\mu)\le 0,\quad j=1,\dots,l,\notag\\
    &\varphi_j(\mu)= 0,\quad j=l+1,\dots,m.\notag
\end{align}
Here, \(C\subset\calM\) is a convex subset, the functions \(\phi_j:\calM\to\bbR\cup\{+\infty\}\) are convex for each \(j=0,1,\dots,l\) and \(\phi_j:\calM\to\bbR\) are affine for each \(j=l+1,\dots,m\).
Using a vector of multipliers \(\lambda\in\bbR^m\), the Lagrangian dual problem of~\eqref{eq:GeneralPrimalProblem} can be written as
\begin{equation}\label{eq:GeneralDualProblem}
    v^{\rmD}:=\sup_{\lambda\in\Lambda}\ \inf_{\mu\in C}\left\{\varphi_0(\mu)+\sum_{j=1}^m \lambda_j\varphi_j(\mu)\right\},
\end{equation}
where the admissible set for the multipliers is defined as \(\Lambda:=\{\lambda\in\bbR^m:\lambda_j\ge0,\,\forall\,j=1,\dots,l\}\).
We want to show the strong duality between~\eqref{eq:GeneralPrimalProblem} and~\eqref{eq:GeneralDualProblem}, given the following condition.
\begin{definition}\label{def:SlaterCondition}
    We say that the problem~\eqref{eq:GeneralPrimalProblem} satisfies the (generalized) Slater condition if the point \(\eta=0\) is in the relative interior of the effective domain of the convex value function associated with the primal problem~\eqref{eq:GeneralPrimalProblem}
    \[v(\eta):=\inf_{\mu\in C}\left\{\varphi_0(\mu):\varphi_j(\mu)=\eta_j,\,j=1,\dots,l,\text{ and }\varphi_j(\mu)\le \eta_j,\,j=l+1,\dots,m\right\},\quad \eta\in\bbR^m.\]
\end{definition}

Recall that the effective domain of a convex function \(v:\bbR^m\to\bbR\cup\{\pm\infty\}\) is defined as \(\dom{v}:=\{\eta\in\bbR^m:v(\eta)<+\infty\}\), which is clearly a convex set.
The affine hull of a convex set \(K\subset\bbR^m\) is defined to be the smallest affine space containing \(K\), and the relative interior of \(K\) is the interior of \(K\) viewed as a subset of its affine hull (equipped with the subspace topology).
By convention, we have \(v(\eta)=+\infty\) if there is no \(\mu\in C\) such that \(\varphi_j(\mu)\le \eta_j\) for all \(j=1,\dots,m\).

\begin{proposition}\label{prop:StrongDuality}
    Assuming the Slater condition, the strong duality holds \(v^{\rmP}=v^{\rmD}\) with an optimal dual solution \(\lambda^*\ge0\) (i.e., the supremum in the dual problem~\eqref{eq:GeneralDualProblem} is attained).
\end{proposition}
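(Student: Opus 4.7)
The plan is to exploit the convex value function $v(\eta)$ introduced in the Slater condition. The argument proceeds by (i) establishing weak duality, (ii) producing an optimal dual multiplier as a subgradient of $v$ at the origin, and (iii) verifying the sign requirement on its inequality components. Weak duality is immediate: for any primal-feasible $\mu$ and any $\lambda \in \Lambda$, the sum $\sum_{j=1}^m \lambda_j \varphi_j(\mu)$ is nonpositive (the inequality-index terms are products of $\lambda_j \geq 0$ with $\varphi_j(\mu) \leq 0$, while the equality-index terms vanish), which yields $\inf_{\mu \in C}\{\varphi_0(\mu) + \sum_j \lambda_j \varphi_j(\mu)\} \leq v^{\rmP}$ and hence $v^{\rmD} \leq v^{\rmP}$ after taking supremum over $\lambda$.

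For strong duality, I first note that $v(\eta)$ is convex: this follows from convexity of the $\varphi_j$ for $j\le l$ (together with affineness of the equality ones), convexity of $C$, and a standard two-point interpolation of $\epsilon$-feasible primal solutions. Next I assume $v^{\rmP}$ is finite, since otherwise weak duality already forces $v^{\rmD} = v^{\rmP} = -\infty$ and the claim holds trivially. Then $v(0) = v^{\rmP} \in \bbR$, so $v$ is a proper convex function on $\bbR^m$. The Slater condition places $0$ in the relative interior of $\dom v$, and the classical subgradient existence theorem for proper convex functions guarantees $\partial v(0) \neq \emptyset$. Choosing $-\lambda^* \in \partial v(0)$ gives
\[
    v(\eta) \ \geq\ v(0) - \langle \lambda^*, \eta \rangle \quad \text{for all } \eta \in \bbR^m,
\]
trivially when $\eta \notin \dom v$. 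For any $\mu \in C$, setting $\eta_j := \varphi_j(\mu)$ makes $\mu$ feasible for the perturbed problem defining $v(\eta)$ with objective $\varphi_0(\mu)$, hence $v(\eta) \leq \varphi_0(\mu)$. Combining the two inequalities yields $\varphi_0(\mu) + \sum_{j=1}^m \lambda_j^* \varphi_j(\mu) \geq v^{\rmP}$, and taking infimum over $\mu \in C$ shows that $\inf_{\mu\in C} L(\mu, \lambda^*) \geq v^{\rmP}$; together with weak duality this gives $v^{\rmD} = v^{\rmP}$ attained at $\lambda^*$.

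It remains to verify $\lambda^* \in \Lambda$, i.e., $\lambda_j^* \geq 0$ for $j = 1, \dots, l$. Enlarging $\eta_j$ in any inequality coordinate only relaxes the corresponding constraint in the definition of $v$, so $v$ is non-increasing in these coordinates; the subgradient inequality then forces the components of $-\lambda^*$ indexed by $j\le l$ to be nonpositive, giving $\lambda_j^* \geq 0$. The main obstacle is invoking subgradient existence at a relative-interior point of $\dom v$ when $v$ is only a proper convex function on $\bbR^m$ (not necessarily finite-valued, and with $\dom v$ possibly not full-dimensional); this is precisely where the Slater condition is indispensable, and it also drives the edge-case split on whether $v^{\rmP}$ is finite.
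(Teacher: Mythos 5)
Your proof is correct and follows essentially the same route as the paper's: weak duality plus properness and subdifferentiability of the perturbation function $v$ at $\eta=0$ (guaranteed by the Slater condition), with the sign of $\lambda^*$ read off from the monotonicity of $v$ in the inequality coordinates. The only cosmetic difference is that you substitute $\eta_j=\varphi_j(\mu)$ directly rather than exchanging two infima as the paper does; the content is identical.
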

\begin{proof}
    The weak duality \(v^{\rmP}\ge v^{\rmD}\) holds with a standard argument of exchanging the \(\inf\) and \(\sup\) operators, so it suffices to show that \(v^{\rmP}\le v^{\rmD}\).
    If \(v^{\rmP}=-\infty\) then the inequality holds trivially, so we assume that \(v^{\rmP}>-\infty\).
    Given the Slater condition, the value function \(v(\eta)\) of the primal problem~\eqref{eq:GeneralPrimalProblem} must be proper \(v(\eta)>-\infty\) for all \(\eta\in\bbR^m\) (ref.\ Theorem 7.2 in~\cite{rockafellar1970convex}) because \(\eta=0\) is in the relative interior of the effective domain of \(v\) and \(v(0)>-\infty\).
    Thus it is also subdifferentiable at the point \(\eta=0\) (ref.\ Theorem 23.4 in~\cite{rockafellar1970convex}), i.e., there exists a subgradient vector \(\lambda^*\in\bbR^m\) such that \(v(\eta)\ge v(0)-(\lambda^*)^\transpose \eta\) for any \(\eta\in\bbR^m\).
    Here, for each \(j=1,\dots,l\), the multiplier \(\lambda_j^*\) must be nonnegative since the function \(v(\eta)\) is not increasing in the \(j\)-th component, so we have \(\lambda\in\Lambda\).
    Since the inequality \(v(\eta)+(\lambda^*)^\transpose \eta\ge v(0)=v^{\rmP}\) holds for any \(\eta\in\bbR^m\), we have
    \begin{align*}
    v^{\rmP}&\le \inf_{\eta\in\bbR^m}\{v(\eta)+(\lambda^*)^\transpose \eta\}\\
    &=\inf_{\mu\in C}\inf_{\eta\in\bbR^m}\left\{\varphi_0(\mu)+\sum_{j=1}^m\lambda_j^*\eta_j:\varphi_j(\mu)\le \eta_j,\,j=1,\dots,l,\;\varphi_j(\mu)= \eta_j,\,j=l+1,\dots,m\right\}\\
    &=\inf_{\mu\in C}\left\{\varphi_0(\mu)+\sum_{j=1}^m\lambda_j^*\varphi_j(\mu)\right\}
    \le v^{\rmD}.
    \end{align*}
    The first equality here results from exchanging two infimum operators, while the second one follows by taking \(\eta_j=\varphi_j(\mu)\), due to the nonnegativity of \(\lambda^*_j\) for each \(j=1,\dots,l\), and replacing \(\eta_j\) with \(\varphi_j(\mu)\) for each \(j=l+1,\dots,m\).
\end{proof}

The strong Lagrangian duality guaranteed by the Slater condition is useful for many applications because we do not have to specify the topology on the vector space \(\calM\).
The corollary below summarizes a special case where there is no equality constraint, and all the inequality constraints can be strictly satisfied.
\begin{corollary}\label{cor:StrongDualityStrictInequality}
    For problems~\eqref{eq:GeneralPrimalProblem} and~\eqref{eq:GeneralDualProblem} with \(l=m\) (no equality constraints), the strong duality holds if there exists a point \(\bar{\mu}\in C\) such that \(\varphi_j(\bar{\mu})<0\), for each \(j=1,\dots,m\).
\end{corollary}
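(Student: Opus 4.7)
The plan is to derive the corollary directly from Proposition~\ref{prop:StrongDuality} by verifying that the Slater condition of Definition~\ref{def:SlaterCondition} is implied by the existence of a strictly feasible point \(\bar{\mu}\in C\). Since there are no equality constraints (\(l=m\)), the value function simplifies to
\[
    v(\eta)=\inf_{\mu\in C}\left\{\varphi_0(\mu):\varphi_j(\mu)\le\eta_j,\,j=1,\dots,m\right\},\quad \eta\in\bbR^m,
\]
and its effective domain is a convex subset of \(\bbR^m\), so we only need to show that \(0\) lies in the interior of \(\dom v\) (which is contained in its relative interior).

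First I would set \(\bar{\eta}_j:=\varphi_j(\bar{\mu})\) for each \(j=1,\dots,m\), noting that \(\bar{\eta}_j<0\) by strict feasibility. Then for any \(\eta\in\bbR^m\) with \(\eta_j>\bar{\eta}_j\) for all \(j\), the point \(\bar{\mu}\) satisfies \(\varphi_j(\bar{\mu})=\bar{\eta}_j<\eta_j\), hence \(\bar{\mu}\) is feasible for the minimization defining \(v(\eta)\), giving \(v(\eta)\le\varphi_0(\bar{\mu})\). Provided \(\bar{\mu}\in\dom\varphi_0\) (which we may assume without loss of generality, since otherwise any such \(\bar{\mu}\) can be adjusted or the primal is trivially infinite and the dual matches), this bound is finite, so a full open neighborhood \(\{\eta\in\bbR^m:\eta_j>\bar{\eta}_j,\,j=1,\dots,m\}\) of the origin is contained in \(\dom v\).

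Having shown that \(0\in\mathrm{int}(\dom v)\subseteq\mathrm{relint}(\dom v)\), the Slater condition holds, so Proposition~\ref{prop:StrongDuality} applies and yields \(v^{\rmP}=v^{\rmD}\) with the supremum in~\eqref{eq:GeneralDualProblem} attained, completing the proof. The only subtlety is the implicit assumption that the strictly feasible point \(\bar{\mu}\) also lies in \(\dom\varphi_0\); apart from that, the argument is a direct verification and the corollary follows essentially by unpacking the definition of the value function \(v(\eta)\) and exploiting the strict slack available from \(\bar{\mu}\).
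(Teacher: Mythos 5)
Your proposal is correct and follows essentially the same route as the paper: both verify the Slater condition of Definition~\ref{def:SlaterCondition} by exhibiting an open neighborhood of the origin (the paper uses the hyperrectangle \(\prod_j(-\epsilon_j/2,\epsilon_j/2)\) with \(\epsilon_j=-\varphi_j(\bar\mu)\), you use the open set \(\{\eta:\eta_j>\varphi_j(\bar\mu)\}\)) on which \(\bar\mu\) remains feasible, so that \(v(\eta)\le\varphi_0(\bar\mu)<+\infty\) and \(0\in\mathrm{int}(\dom v)\), after which Proposition~\ref{prop:StrongDuality} applies. The finiteness caveat \(\varphi_0(\bar\mu)<+\infty\) that you flag is implicitly assumed in the paper's proof as well.
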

\begin{proof}
    Let \(\epsilon_j:=-\varphi_j(\mu)>0\) for \(j=1,\dots,m\), and \(U:=\prod_{j=1}^m(-\frac{\epsilon_j}{2},\frac{\epsilon_j}{2})\subset\bbR^m\) be an open hyperrectangle.
    Then for any \(\eta\in U\), we have \(v(\eta)\le\varphi_0(\bar{\mu})<+\infty\).
    Therefore, we know that the Slater condition holds because \(0\in U\subset\dom{v}\), and the result follows from Proposition~\ref{prop:StrongDuality}.
\end{proof}

Assuming that \(\calM\) is normed and complete, we could have another generalized Slater condition for problems with equality constraints.
However, it is in general much harder to check so we do not base our discussion in Section~\ref{sec:Model} on it.

\subsection{Finite-dimensional Dual Recursion}
Now we derive the finite-dimensional dual recursion for the DR-MCO~\eqref{eq:MDRO-RecursiveForm}.
Recall that by the definition of Wasserstein distance~\eqref{eq:WassersteinDistance}, the constraint \(W_t(p,\hat{\nu}_t)\le \rho_{t,0}\) is ensured if there exists a probability measure on the product space \(\pi_t\in\calM^{\Prob}(\Xi_t\times\Xi_t)\) with marginal probability measures \(P^1_*(\pi_t)=p\) and \(P^2_*(\pi_t)=\hat{\nu}_t\), such that
\begin{align}
    \rho_{t,0}
    \ge\int_{\Xi_t\times\Xi_t}d_t\dif\pi_t
    =\int_{\Xi_t}\int_{\Xi_t}d_{t}(\xi,\xi')\dif\pi_t(\xi\vert\xi')\dif\hat{\nu}_t(\xi')
    =\frac{1}{n_t}\sum_{k=1}^{n_t}\int_{\Xi_t}d_{t,k}(\xi)\dif p_{t,k}(\xi)
\end{align}
where we define \(p_{t,k}:=\pi_t(\cdot\vert\hat{\xi}_{t,k})\in\calW_t\) to be the probability measure conditioned on \(\{\xi'=\hat{\xi}_{t,k}\}\) and \(d_{t,k}(\xi):=d(\xi,\hat{\xi}_{t,k})\) for any \(\xi\in\Xi_t\). 
Then we have \(p=\frac{1}{n_t}\sum_{k=1}^{n_t}p_{t,k}\) by the law of total probability.
Due to this condition, we define the following parametrized optimization problem for any \(\rho>0\) and any state \(x_{t-1}\in\calX_{t-1}\)
\begin{align}\label{eq:ParametrizedRecursion}
    q_t(x_{t-1};\rho):=\sup_{p_{t,k}\in\calW_t}\quad
    &\frac{1}{n_t}\sum_{k=1}^{n_t}\int_{\Xi_t}Q_t(x_{t-1};\xi_t)\dif p_{t,k}(\xi_t)\\
    \mathrm{s.t.}\quad
    &\frac{1}{n_t}\sum_{k=1}^{n_t}\int_{\Xi_t}d_{t,k}\dif{p_{t,k}}\le\rho,\notag\\
    &\frac{1}{n_t}\sum_{k=1}^{n_t}\int_{\Xi_t}g_{t,j}\dif{p_{t,k}}\le\rho_{t,j}, \quad j=1,\dots,m_t.\notag
\end{align}
From the discussion above, we see that \(\calQ_{t-1}(x_{t-1})\ge q_t(x_{t-1};\rho_{t,0})\).
We next show that the equality holds assuming the strict feasibility of the empirical measure \(\hat{\nu}_t\).

Let \(\lambda\in\bbR^{m_t+1}_{\ge0}\) denote a multiplier vector.
Assumption~\ref{assum:NominalStrictFeasibility} guarantees by Corollary~\ref{cor:StrongDualityStrictInequality} the strong duality of the following Lagrangian dual problem
\begin{align}
    q_t(x_{t-1};\rho)
    &=\min_{\lambda\ge0}\left\{\rho\lambda_0+\sum_{j=1}^{m_t}\rho_{t,j}\lambda_j+\frac{1}{n_t}\sum_{k=1}^{n_t}\sup_{p_{t,k}\in\calW_t}\int_{\Xi_t}\bigg[Q_{t}(x_{t-1};\cdot)-\lambda_0 d_{t,k}-\sum_{j=1}^{m_t}\lambda_j g_{t,j}\bigg]\dif{p_{t,k}}\right\}\notag\\
    &=\min_{\lambda\ge0}\left\{\rho\lambda_0+\sum_{j=1}^{m_t}\rho_{t,j}\lambda_j+\frac{1}{n_t}\sum_{k=1}^{n_t}\sup_{\xi_{k}\in\Xi_t}\Bigg\{Q_{t}(x_{t-1};\xi_k)-\lambda_0 d_{t,k}(\xi_k)-\sum_{j=1}^{m_t}\lambda_j g_{t,j}(\xi_k)\Bigg\}\right\}.\label{eq:ParametrizedRecursionDual}
\end{align}
where the second equality holds because each Dirac measure centered at \(\xi\in\Xi_t\) satisfies \(\delta_{\xi}\in\calW_t\) and each \(p_{t,k}\in\calW_t\) is a probability measure.
We are now ready to show that \(\calQ_{t-1}(x_{t-1})=q_t(x_{t-1};\rho_{t,0})\), which implies Theorem~\ref{thm:FiniteDimensionalDual}.

\begin{theorem}\label{thm:FiniteDimensionalDual-Extended}
Under Assumption~\ref{assum:NominalStrictFeasibility}, in any stage \(t\ge 2\), we have \(q_t(x_{t-1};\rho)\) is a concave function in \(\rho\) for any fixed \(x_{t-1}\).
Consequently, the expected cost-to-go function~\eqref{eq:MDRO-RecursiveForm} satisfies \(\calQ_{t-1}(x_{t-1})=q_t(x_{t-1};\rho_{t,0})\) and thus can be equivalently rewritten as
\begin{equation*}
    \calQ_{t-1}(x_{t-1})=\min_{\lambda\ge0}\left\{\sum_{j=0}^{m_t}\rho_{t,j}\lambda_j+\frac{1}{n_t}\sum_{k=1}^{n_t}\sup_{\xi_k\in\Xi_t}\left\{Q_{t}(x_{t-1};\xi_k)-\lambda_0 d_{t,k}(\xi_k)-\sum_{j=1}^{m_t}\lambda_j g_{t,j}(\xi_k)\right\}\right\}.
\end{equation*}
\end{theorem}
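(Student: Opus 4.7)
The plan is to establish the two assertions in order and then substitute into the strong Lagrangian dual identity~\eqref{eq:ParametrizedRecursionDual} that the discussion preceding the theorem already derived under Assumption~\ref{assum:NominalStrictFeasibility}. First I would prove concavity of $\rho\mapsto q_t(x_{t-1};\rho)$ by a standard convex-combination argument: for fixed $x_{t-1}$, any $\rho_1,\rho_2\ge 0$, $c\in[0,1]$, and any $\epsilon$-optimal feasible tuples $(p_{t,k}^{(i)})_{k=1}^{n_t}$ for $q_t(x_{t-1};\rho_i)$, $i=1,2$, the componentwise convex combination $p_{t,k}:=cp_{t,k}^{(1)}+(1-c)p_{t,k}^{(2)}\in\calW_t$ remains feasible for the radius $c\rho_1+(1-c)\rho_2$ because the Wasserstein-radius constraint, the moment constraints involving $g_{t,j}$, and the objective in~\eqref{eq:ParametrizedRecursion} are all affine in each $p_{t,k}$. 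Sending $\epsilon\to 0$ yields $q_t(x_{t-1};c\rho_1+(1-c)\rho_2)\ge cq_t(x_{t-1};\rho_1)+(1-c)q_t(x_{t-1};\rho_2)$.

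Next I would upgrade the inequality $\calQ_{t-1}(x_{t-1})\ge q_t(x_{t-1};\rho_{t,0})$ noted before the statement to an equality. Fix any $p\in\calP_t$. Because $W_t(p,\hat{\nu}_t)\le\rho_{t,0}$ is itself defined as an infimum, for every $\epsilon>0$ there exists $\pi_\epsilon\in\calM^\Prob(\Xi_t\times\Xi_t)$ with marginals $p$ and $\hat{\nu}_t$ satisfying $\int d_t\,d\pi_\epsilon\le\rho_{t,0}+\epsilon$. Disintegrating $\pi_\epsilon$ along the purely atomic second marginal $\hat{\nu}_t$ produces conditional measures $p_{t,k}^{(\epsilon)}:=\pi_\epsilon(\cdot\vert\hat{\xi}_{t,k})\in\calW_t$ with $\frac{1}{n_t}\sum_k p_{t,k}^{(\epsilon)}=p$ and $\frac{1}{n_t}\sum_k\int d_{t,k}\,dp_{t,k}^{(\epsilon)}=\int d_t\,d\pi_\epsilon\le\rho_{t,0}+\epsilon$, while the moment inequalities $\frac{1}{n_t}\sum_k\int g_{t,j}\,dp_{t,k}^{(\epsilon)}=\int g_{t,j}\,dp\le\rho_{t,j}$ are inherited from $p\in\calP_t$. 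Hence $(p_{t,k}^{(\epsilon)})$ is feasible for $q_t(x_{t-1};\rho_{t,0}+\epsilon)$ with objective $\int Q_t(x_{t-1};\cdot)\,dp$, giving $\calQ_{t-1}(x_{t-1})\le q_t(x_{t-1};\rho_{t,0}+\epsilon)$ after taking the supremum over $p$. Sending $\epsilon\to 0^+$ and using right-continuity of the concave function $q_t(x_{t-1};\cdot)$ at $\rho_{t,0}$ closes the gap, so $\calQ_{t-1}(x_{t-1})=q_t(x_{t-1};\rho_{t,0})$. The displayed formula is then obtained by plugging this equality into~\eqref{eq:ParametrizedRecursionDual}, whose validity was already secured by Corollary~\ref{cor:StrongDualityStrictInequality} because Assumption~\ref{assum:NominalStrictFeasibility} ensures that $\hat{\nu}_t$ is a strictly feasible point (with $\rho_{t,0}>0$) for all inequality constraints of~\eqref{eq:ParametrizedRecursion}.

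The main obstacle is the limit step in the second part: one must ensure that $\rho_{t,0}$ lies in the interior of the effective domain of the concave map $q_t(x_{t-1};\cdot)$ so that concavity automatically delivers right-continuity there. This reduces to checking $q_t(x_{t-1};\rho)<+\infty$ for some $\rho$ strictly larger than $\rho_{t,0}$, which is easy whenever $Q_t(x_{t-1};\cdot)$ has at most affine growth in $\xi_t$ (matching the finite growth-rate condition already exploited in Section~\ref{sec:Algorithm:ConvexSingleStageSubproblem}). An alternative route that bypasses the $\epsilon$-argument altogether is to invoke attainment of the $W_1$ infimum on Polish spaces (as in Villani's \textit{Optimal Transport}, Theorem 4.1), producing an optimal coupling $\pi$ directly at $\epsilon=0$ and giving feasibility at $\rho_{t,0}$ without passage to the limit.
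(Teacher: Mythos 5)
Your proof is correct in substance and follows the same overall strategy as the paper's: disintegrate a near-optimal coupling to pass from \(\calP_t\) to the parametrized problem~\eqref{eq:ParametrizedRecursion}, bound \(\calQ_{t-1}(x_{t-1})\) by \(q_t(x_{t-1};\rho_{t,0}+\epsilon)\), and let \(\epsilon\to0^+\) using concavity, then substitute the strong dual~\eqref{eq:ParametrizedRecursionDual}. Two steps differ. First, you obtain concavity of \(q_t(x_{t-1};\cdot)\) directly from the primal by mixing \(\epsilon\)-optimal feasible tuples, whereas the paper reads it off~\eqref{eq:ParametrizedRecursionDual} as a pointwise minimum of functions affine in \(\rho\); both are valid, and yours has the mild advantage of not presupposing the duality. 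Second, and more importantly, the ``main obstacle'' you identify --- right-continuity of \(q_t(x_{t-1};\cdot)\) at \(\rho_{t,0}\) --- does not require the affine-growth hypothesis on \(Q_t(x_{t-1};\cdot)\) that you propose, which is not among the theorem's assumptions and would weaken the result. The paper resolves it by a dichotomy: if \(q_t(x_{t-1};\rho_{t,0})=+\infty\), the equality is immediate from the already-established inequality \(\calQ_{t-1}(x_{t-1})\ge q_t(x_{t-1};\rho_{t,0})\); otherwise concavity forces \(q_t(x_{t-1};\rho)<+\infty\) for \emph{every} \(\rho>0\) (if \(q_t\) were \(+\infty\) at some \(\rho_1\), concavity would propagate \(+\infty\) along the open segment between \(\rho_1\) and any other point, hence to all of \((0,+\infty)\)), so \(q_t(x_{t-1};\cdot)\) is a finite concave function on an open interval and therefore continuous there. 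Your fallback via attainment of the Kantorovich infimum (Villani, Theorem 4.1) is a legitimate way to bypass the limit entirely and gives feasibility at \(\rho_{t,0}\) directly; either that route or the dichotomy closes the argument, so the only revision needed is to drop the growth-rate condition from your main line.
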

\begin{proof}
    Let us fix any \(x_{t-1}\in\calX_{t-1}\).
    The first assertion on the concavity of \(q_t(x_{t-1};\rho)\) follows directly from~\eqref{eq:ParametrizedRecursionDual} since \(q_t(x_{t-1};\cdot)\) is a minimum of affine functions.
    Moreover, from the definition~\eqref{eq:ParametrizedRecursion} we see that \(q_t(x_{t-1};\rho)\ge 0\) for any \(\rho>0\) as the measures \(p_{t,k}=\delta_{\hat{\xi}_{t,k}}\) satisfy the constraints and \(Q_t(x_{t-1};\xi_t)\ge 0\) by the nonnegativity of the cost functions \(f_t\).
    If \(q_t(x_{t-1};\rho_{t,0})=+\infty\), then the equality holds trivially as we already showed that \(\calQ_{t-1}(x_{t-1})\ge q_t(x_{t-1};\rho_{t,0})\).
    Otherwise, we must have \(q_t(x_{t-1};\rho)<+\infty\) for any \(\rho>0\) due to the concavity.
    Thus \(q_t(x_{t-1};\cdot)\) is a continuous function on \((0,+\infty)\).

    To prove the inequality \(\calQ_{t-1}(x_{t-1})\le q_t(x_{t-1};\rho_{t,0})\), take any \(\epsilon>0\).
    From the definition~\eqref{eq:WassersteinDistance}, the constraint \(W_t(p,\hat{\nu}_t)\le\rho_{t,0}\) implies that there exists a probability measure \(\pi_t\in\calM^{\Prob}(\Xi_t\times\Xi_t)\) with marginal probability measures \(P^1_*(\pi_t)=p\) and \(P^2_*(\pi_t)=\hat{\nu}_t\) such that \(\int d_t\dif{\pi_t}\le\rho_{t,0}+\epsilon\).
    In other words, we have \(\calQ_{t-1}(x_{t-1})\le q_t(x_{t-1};\rho_{t,0}+\epsilon)\).
    Now by the continuity of \(q_t(x_{t-1};\cdot)\), we conclude that \(\calQ_{t-1}(x_{t-1})\le\lim_{\epsilon\to 0+}q_t(x_{t-1};\rho_{t,0}+\epsilon)=q_t(x_{t-1};\rho_{t,0})\).
\end{proof}

\section{Supplemental Numerical Results}
\label{sec:SupplementalResults}

In this section, we display supplemental results from our numerical experiments in Section~\ref{sec:Numerical}.

\begin{figure}[htbp]
    \centering
    \includegraphics[width=1.0\textwidth]{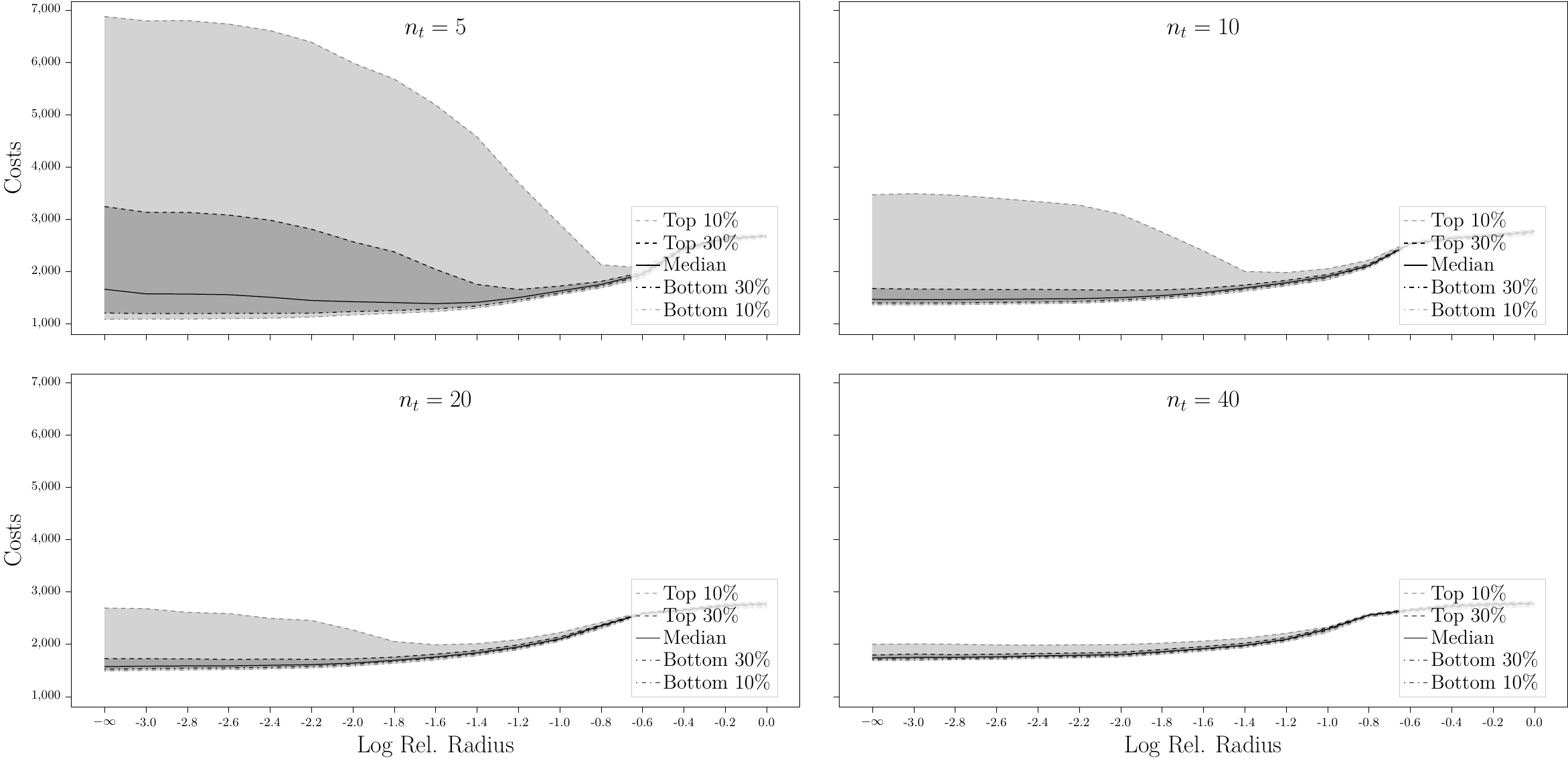}
    \caption{Out-of-sample Cost Quantiles for Different Radii on Multi-commodity Inventory with Uncertain Demands, Additional Run 1}
    \label{fig:DemandLinePlot1}
\end{figure}
\begin{figure}[htbp]
    \centering
    \includegraphics[width=1.0\textwidth]{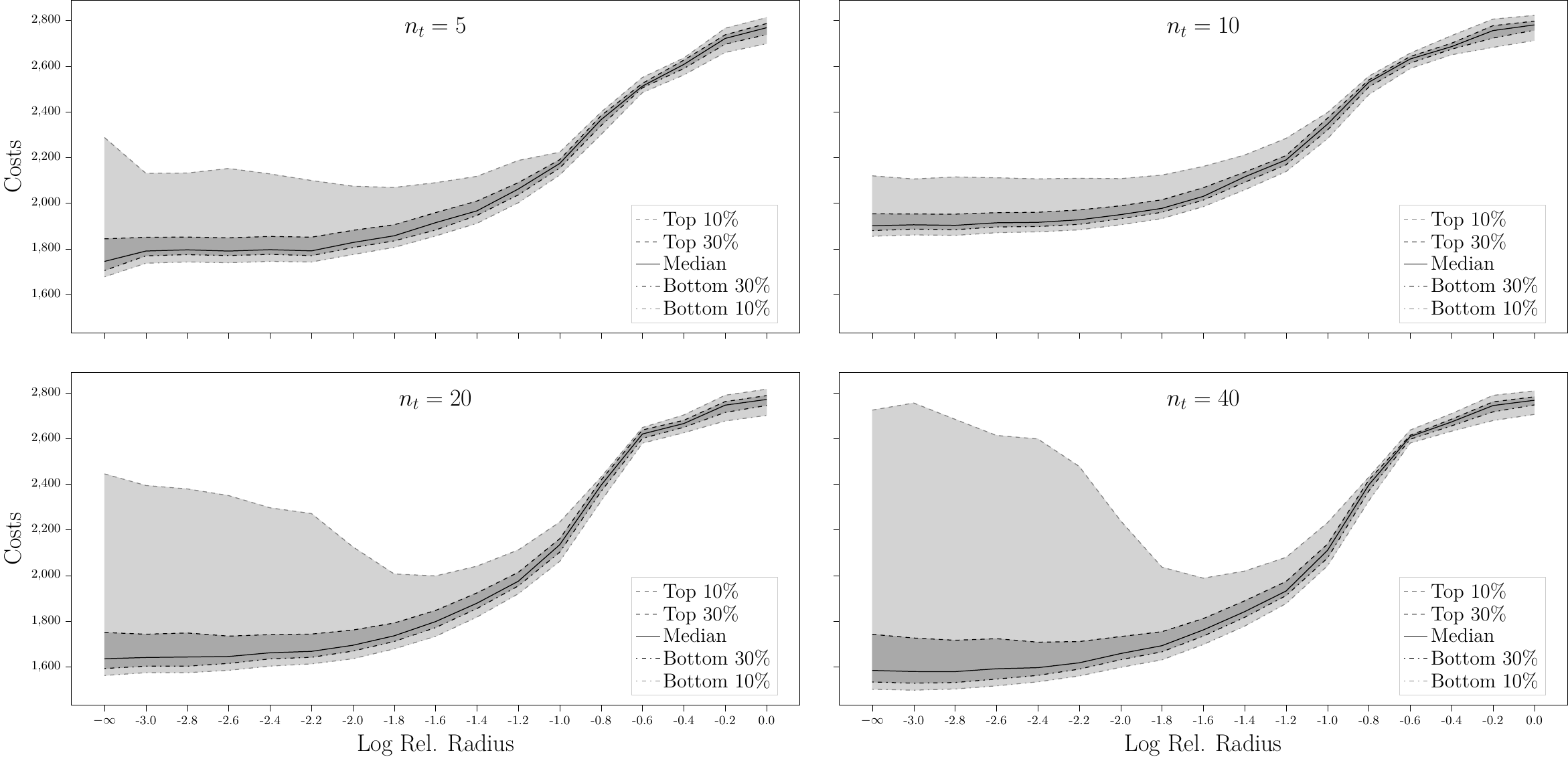}
    \caption{Out-of-sample Cost Quantiles for Different Radii on Multi-commodity Inventory with Uncertain Demands, Additional Run 2}
    \label{fig:DemandLinePlot2}
\end{figure}

\begin{figure}[htbp]
    \centering
    \includegraphics[width=1.0\textwidth]{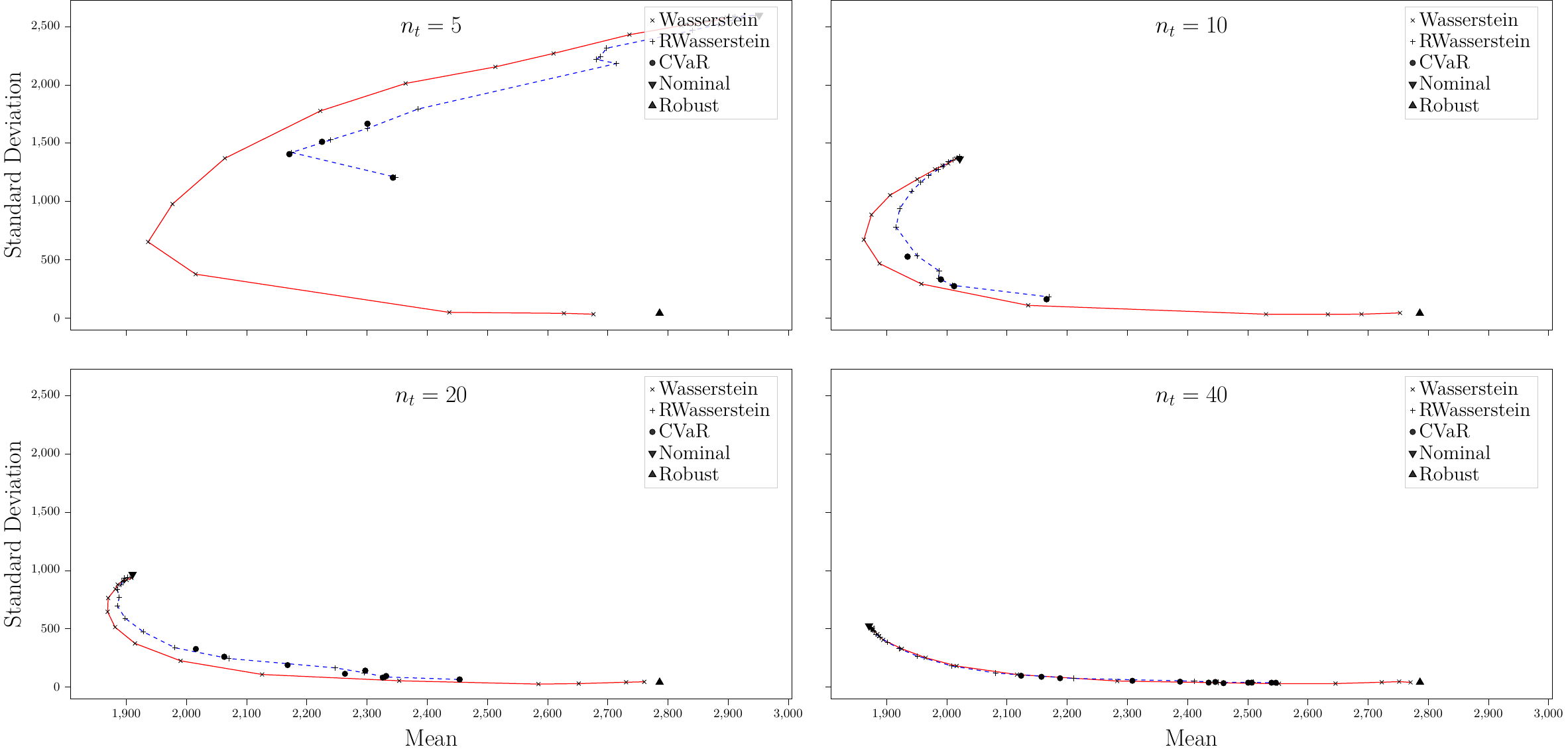}
    \caption{Comparison against Baseline Models on Multi-commodity Inventory with Uncertain Demands, Additional Run 1}
    \label{fig:DemandScatterPlot1}
\end{figure}
\begin{figure}[htbp]
    \centering
    \includegraphics[width=1.0\textwidth]{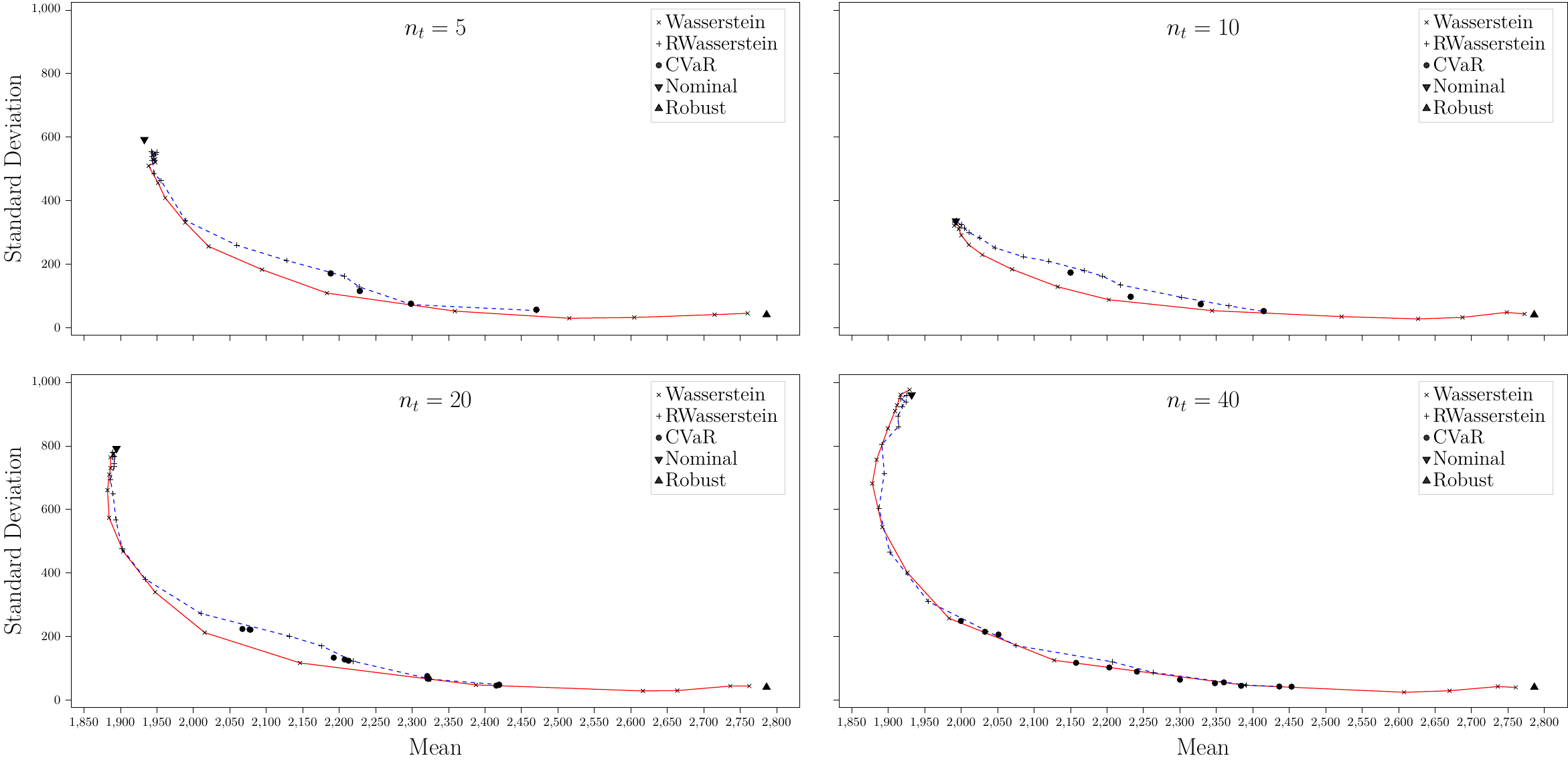}
    \caption{Comparison against Baseline Models on Multi-commodity Inventory with Uncertain Demands, Additional Run 2}
    \label{fig:DemandScatterPlot2}
\end{figure}

\begin{figure}[htbp]
    \centering
    \includegraphics[width=1.0\textwidth]{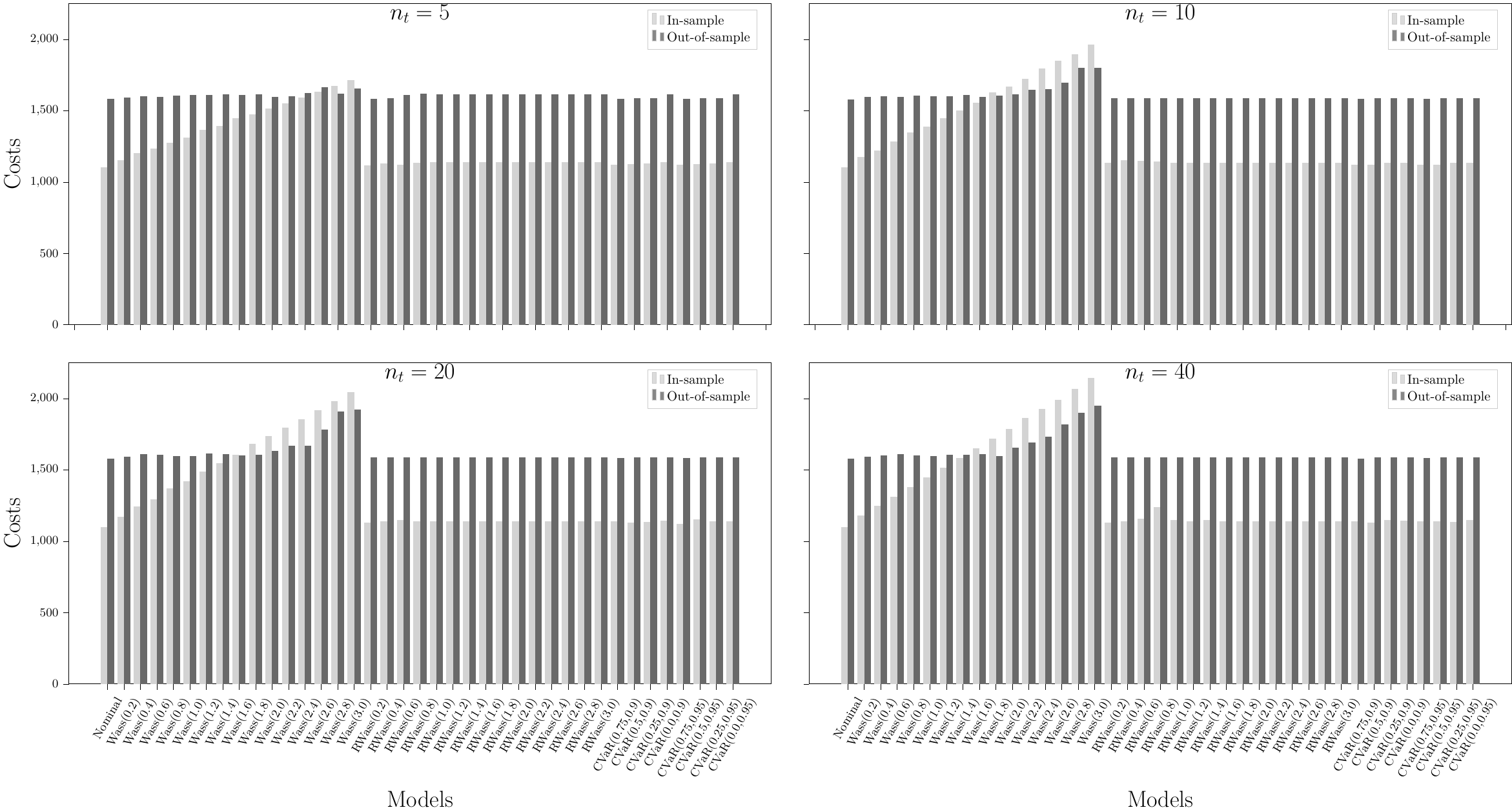}
    \caption{In-sample and Out-of-sample Mean Costs on Multi-commodity Inventory with Uncertain Prices, Additional Run 1}
    \label{fig:SupplyPlot1}
\end{figure}
\begin{figure}[htbp]
    \centering
    \includegraphics[width=1.0\textwidth]{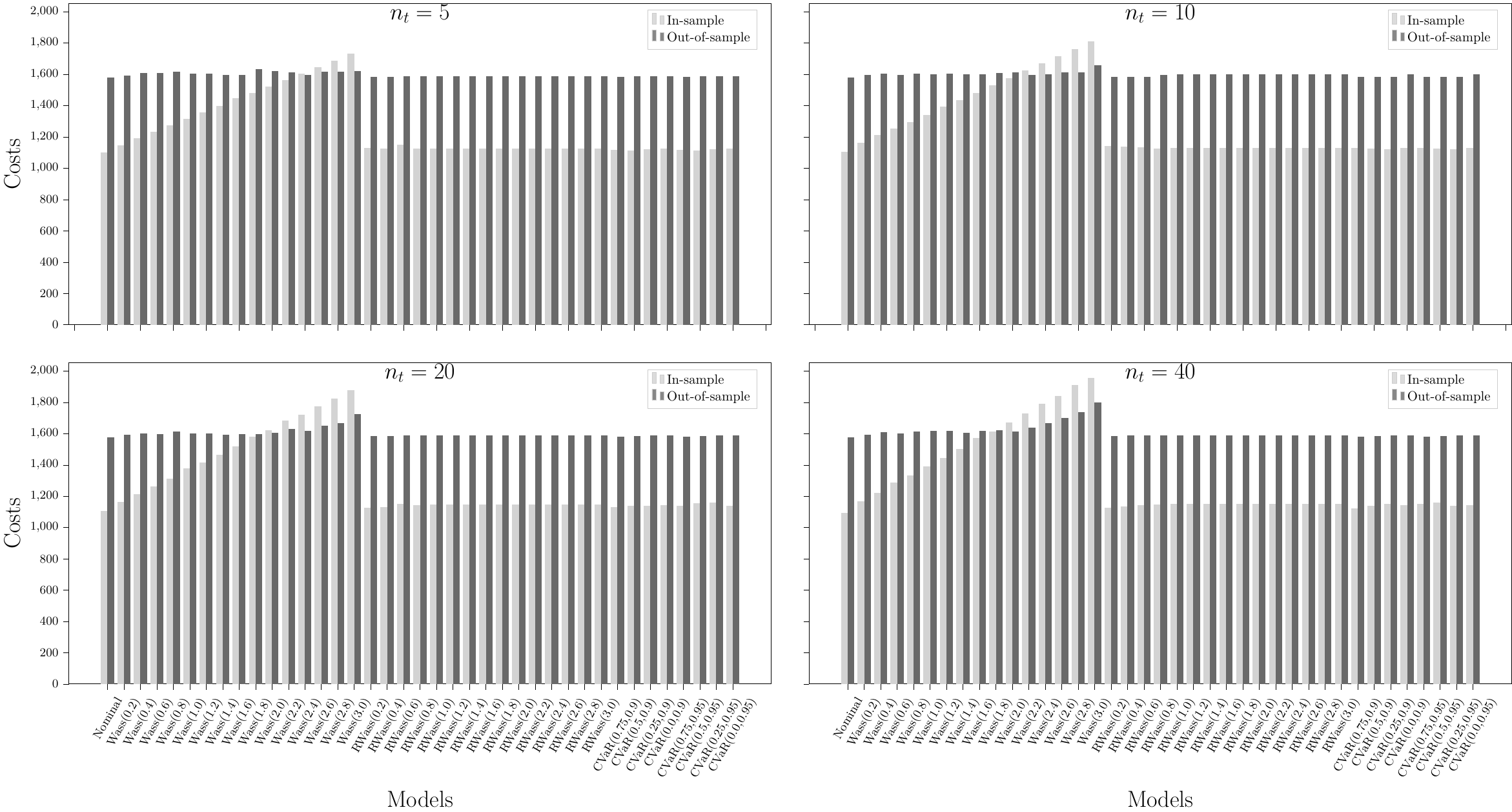}
    \caption{In-sample and Out-of-sample Mean Costs on Multi-commodity Inventory with Uncertain Prices, Additional Run 2}
    \label{fig:SupplyPlot2}
\end{figure}

\begin{figure}[htbp]
    \centering
    \includegraphics[width=1.0\textwidth]{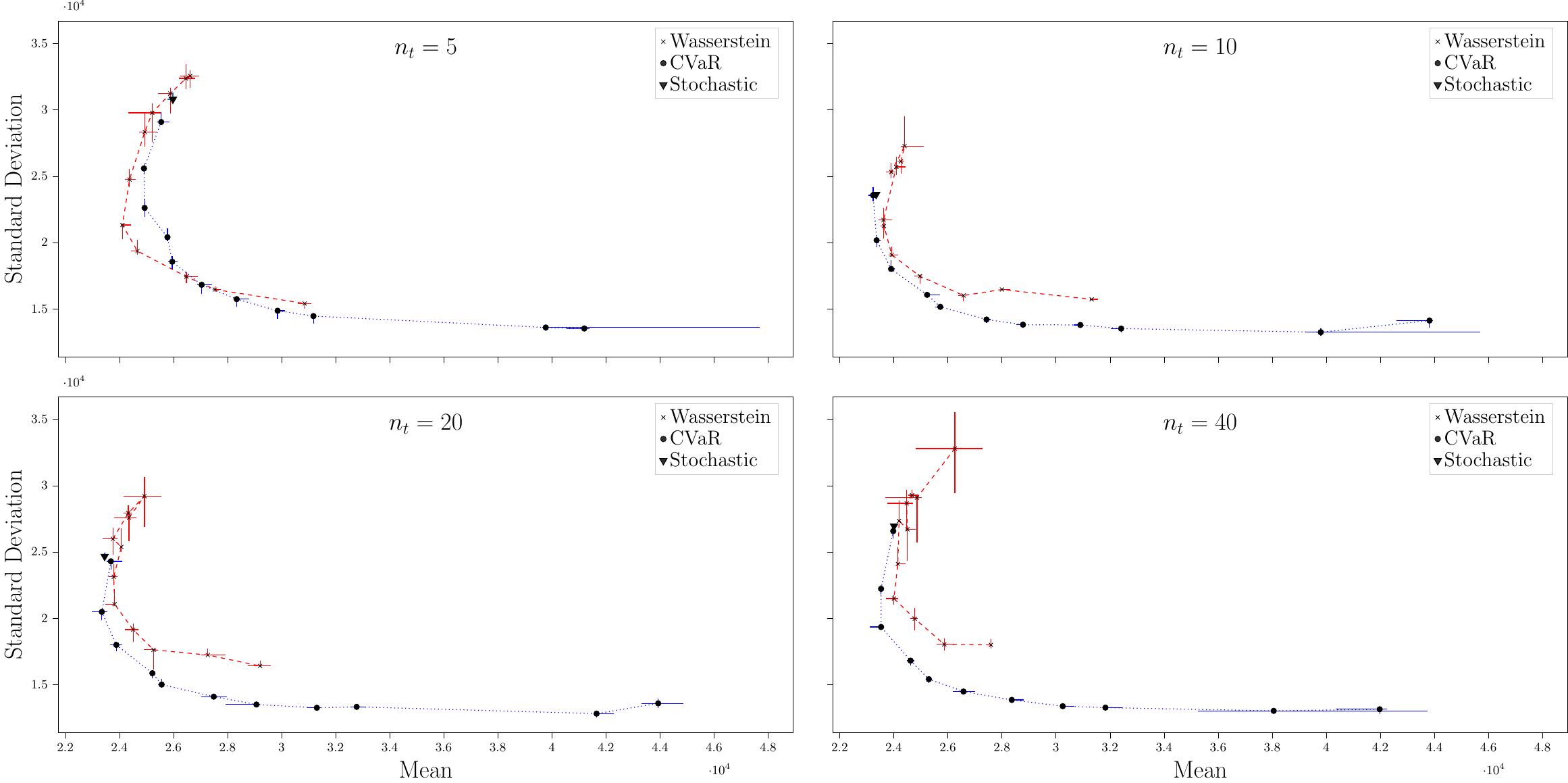}
    \caption{Comparison against Baseline Models on Hydro-thermal Power Planning, Additional Run 1}
    \label{fig:HydroPlot1}
\end{figure}
\begin{figure}[htbp]
    \centering
    \includegraphics[width=1.0\textwidth]{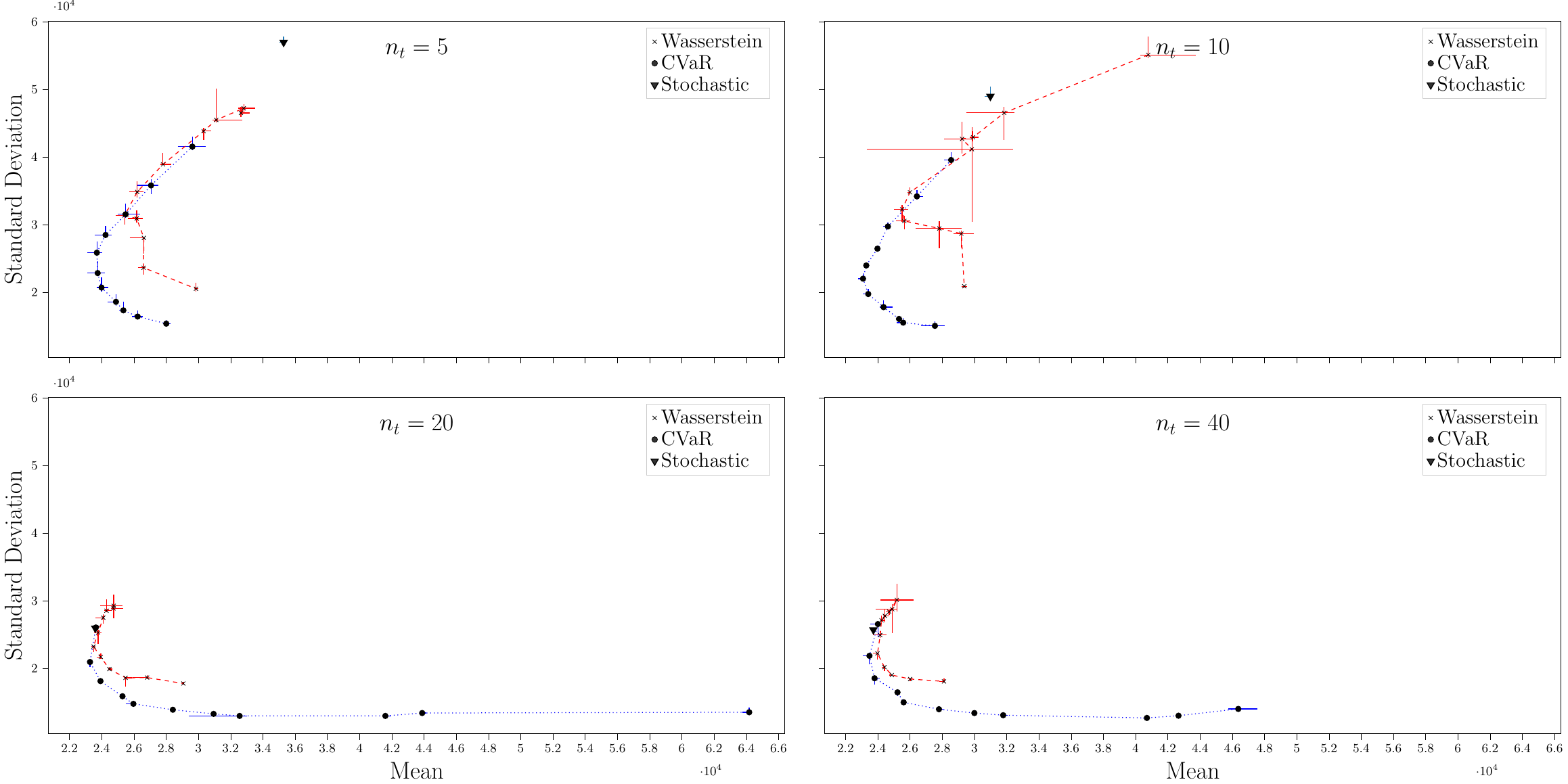}
    \caption{Comparison against Baseline Models on Hydro-thermal Power Planning, Additional Run 2}
    \label{fig:HydroPlot2}
\end{figure}

\end{document}